\newtheorem{theorem}{Theorem}[section]
\newtheorem{proposition}[theorem]{Proposition}
\newtheorem{corollary}[theorem]{Corollary}
\newtheorem{lemma}[theorem]{Lemma}
\theoremstyle{definition}
\newtheorem{definition}[theorem]{Definition}
\theoremstyle{remark}
\newtheorem{remark}[theorem]{Remark}
\newtheorem{example}[theorem]{Example}
\newcommand{\ind}{\mathrm{ind}}
\newcommand{\nul}{\mathrm{nul}}
\newcommand{\dom}{\mathrm{dom}}
\newcommand{\area}{\mathrm{Area}}
\begin{document}

\title[Index and isoperimetric inequalities]{Index of minimal spheres and isoperimetric eigenvalue inequalities}
\begin{abstract}
In the present paper we use twistor theory in order to solve two problems related to harmonic maps from surfaces to Euclidean spheres $\mathbb{S}^n$. First, we propose a new approach to isoperimetric inequalities based on energy index. Using this approach we show that for any positive $k$, the $k$-th non-zero eigenvalue of the Laplacian on the real projective plane endowed with a metric of unit area, is maximized on the sequence of metrics converging to a union of $(k-1)$ identical copies of round sphere and a single round projective plane. This extends the results of P. Li and S.-T. Yau for $k=1$ (1982); N. Nadirashvili and A. Penskoi for $k=2$ (2018); and confirms the conjecture made in~\cite{KNPP}. Second, we improve the known lower bounds for the area index of minimal two-dimensional spheres and minimal projective planes in $\mathbb{S}^n$. In the course of the proof we establish a twistor correspondence for Jacobi fields, which could be of independent interest for the study of moduli spaces of harmonic maps.
\end{abstract}

\author[M. Karpukhin]{Mikhail Karpukhin}
\address{Department of Mathematics,
University of California, Irvine, 340 Rowland Hall, Irvine, CA 92697-3875
}
\email{mkarpukh@uci.edu}
\maketitle

\section{Introduction}

\subsection{Laplacian eigenvalues}
Let $(M,g)$ be a closed surface with Riemannian metric $g$. The Laplace-Beltrami operator, or simply the Laplacian, is a natural operator $\Delta_g$, defined in local coordinates by the formula
$$
\Delta_g u = -\frac{1}{\sqrt{|g|}}\frac{\partial}{\partial x^i}\left(\sqrt{|g|}g^{ij}\frac{\partial u}{\partial x^j}\right)
$$
 
 It is well-known that for closed surfaces the spectrum of this operator consists only of eigenvalues of finite multiplicities. Thus, they form a sequence
 $$
 0=\lambda_0(M,g)<\lambda_1(M,g)\leqslant\lambda_2(M,g)\leqslant \lambda_3(M,g)\leqslant\ldots,
 $$
where eigenvalues are written with multiplicities.

Furthermore, we define normalized eigenvalues 
$$
\bar\lambda_k(M,g) = \lambda_k(M,g)\area_g(M).
$$
Consider the quantity
$$
\Lambda_k(M) = \sup_g \bar\lambda_k(M,g).
$$
The problem of isoperimetric eigenvalue inequalities is concerned with finding the exact value of $\Lambda_k(M)$ for all pairs $\{M,k\}$ and understanding for which metrics the supremum is achieved. We refer to such metrics as {\em maximal} for $\bar\lambda_k$.

For $k=1$ Yang and Yau~\cite{YangYau} have shown that for an orientable surface of genus $\gamma$ one has
\begin{equation}
\label{YY:ineq}
\Lambda_1(M)\leqslant 8\pi\left[\frac{\gamma+3}{2}\right],
\end{equation}
where $[x]$ stands for the integer part of $x$. The non-orientable version of inequality~\eqref{YY:ineq} was proven in~\cite{KarpukhinNonOrientable}. Furthermore, it was shown by the author in~\cite{KarpukhinYY} that inequality~\eqref{YY:ineq} is strict for $\gamma>2$.

Inequality~\eqref{YY:ineq} in combination with the result of~\cite{KarpukhinNonOrientable} means that for all surfaces $M$ one has $\Lambda_1(M)<+\infty$. The exact values of $\Lambda_1(M)$ are known only for a few $M$.
\begin{itemize}
\item Hersch~\cite{Hersch}, 1970: $\Lambda_1(\mathbb{S}^2)=8\pi$ and the supremum is achieved only for the round metric of constant curvature.
\item Li and Yau~\cite{LiYau}, 1982: $\Lambda_1(\mathbb{RP}^2)=12\pi$ and the supremum is achieved only for the round metric of constant curvature.
\item Nadirashvili~\cite{NadirashviliTorus}, 1996: $\Lambda_1(\mathbb{T}^2)=\frac{8\pi^2}{\sqrt{3}}$ and the supremum is achieved only for the flat equilateral metric, see also~\cite{CKM}.
\item Jakobson, Nadirashvili, Polterovich~\cite{JNP}, 2005, and El Soufi, Giacomini, Jazar~\cite{EGJ}, 2006: the metric that achieves $\Lambda_1(\mathbb{KL})$ is characterised as the metric induced by the unique minimal immersion to $\mathbb{S}^n$ by the first eigenfunctions.
\item Nayatani, Shoda~\cite{NayataniShoda}, 2019: for an orientable surface $\Sigma_2$ of genus $2$ one has $\Lambda_1(\Sigma_2) = 16\pi$. The metric is induced on a Bolza surface by a hyperelliptic covering of $\mathbb{S}^2$. The metric on a Bolza surface was conjectured to be maximal in~\cite{JLNNP}. The authors of~\cite{JLNNP} confirmed their conjecture using numerical calculations.
\end{itemize}

For $k>1$ Korevaar~\cite{Korevaar} (see also~\cite{GY, Hassannezhad}) confirmed the conjecture of Yau~\cite[Problem 71]{Y} and proved that there exists a universal constant $C$, such that 
$$
\Lambda_k(M)\leqslant Ck(\gamma+1).
$$
Thus, $\Lambda_k(M)<+\infty$ for all $k$. Until very recently, the only known results for $k>1$ were $\Lambda_2(\mathbb{S}^2)$ (see~\cite{NadirashviliS2,PetridesS2}), $\Lambda_3(\mathbb{S}^2)$ (see~\cite{NadirashviliSire}) and $\Lambda_2(\mathbb{RP}^2)$ (see~\cite{NadirashviliPenskoi}).

In~\cite{KNPP} the author jointly with Nadirashvili, Penskoi and Polterovich has obtained the first general result that covers all values of $k$.
\begin{theorem}[K., Nadirashvili, Penskoi, Polterovich~\cite{KNPP}]
\label{KNPP:thm}
For all $k\geqslant 1$ one has $\Lambda_k(\mathbb{S}^2)=8\pi k$. Moreover, for any smooth metric $g$ on $\mathbb{S}^2$ and $k>1$ one has 
$$
\bar\lambda_k(\mathbb{S}^2,g)<8\pi k.
$$
The supremum in the definition of $\Lambda_k(\mathbb{S}^2)$ is achieved on a sequence of metrics converging to a union of $k$ touching round spheres.
\end{theorem}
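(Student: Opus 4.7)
The plan is to establish matching upper and lower bounds, and then to verify that the supremum is not realised by any smooth metric when $k>1$. For the lower bound $\Lambda_k(\mathbb{S}^2)\geqslant 8\pi k$, I would construct an explicit family $g_\varepsilon$ of smooth metrics on $\mathbb{S}^2$ that concentrates area equally into $k$ small geodesic disks of a background round metric, arranged so that as $\varepsilon\to 0$ one obtains, in the bubble-tree limit, a wedge of $k$ round spheres of equal area $1/k$. On each limiting component the first nonzero Laplace eigenvalue equals $8\pi k$, and a standard lower-semicontinuity argument based on the variational characterization of $\lambda_k$ along the degeneration gives $\liminf_{\varepsilon\to 0}\bar\lambda_k(\mathbb{S}^2, g_\varepsilon)\geqslant 8\pi k$.

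For the upper bound, let $g_n$ be a sequence of smooth metrics with $\bar\lambda_k(\mathbb{S}^2, g_n)\to\Lambda_k(\mathbb{S}^2)$. After a variational replacement argument (in the spirit of Nadirashvili and Petrides), I may assume each $g_n$ is conformal to the pullback of the round metric by a harmonic eigenmap $\Phi_n:\mathbb{S}^2\to\mathbb{S}^{N_n}$ whose components span the $\lambda_k(g_n)$-eigenspace, so that
$$
\bar\lambda_k(\mathbb{S}^2, g_n)=2E(\Phi_n).
$$
Applying bubble-tree compactness for harmonic spheres (Sacks--Uhlenbeck, Parker) along a subsequence, one obtains a base weak limit $\Phi_\infty$ and finitely many nonconstant bubbles $\Psi^{(1)},\dots,\Psi^{(\ell)}$, with the energy identity
$$
E(\Phi_n)\longrightarrow E(\Phi_\infty)+\sum_{j=1}^{\ell}E\bigl(\Psi^{(j)}\bigr).
$$
Each nonconstant harmonic sphere in $\mathbb{S}^N$ has energy at least $4\pi$, with equality only for a totally geodesic conformal embedding. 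The crucial combinatorial step is to bound the total number of nontrivial components of the bubble tree by $k$: on a wedge of $m$ round spheres the first nonzero Laplace eigenvalue has multiplicity roughly $3m$, whereas in the limit it carries index at most $k$, forcing $m\leqslant k$. Combined with the $4\pi$ lower bound this yields $E(\Phi_n)\leqslant 4\pi k$ and hence $\Lambda_k(\mathbb{S}^2)\leqslant 8\pi k$.

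For the strict inequality in the smooth case when $k>1$, equality $\bar\lambda_k(\mathbb{S}^2, g)=8\pi k$ for a smooth $g$ would produce a harmonic eigenmap $\Phi$ with $E(\Phi)=4\pi k$, and the equality cases of the $4\pi$ energy bound and of the bubble-count bound would force $\Phi$ to factor through a degree-$k$ branched covering $\mathbb{S}^2\to\mathbb{S}^2$; by Riemann--Hurwitz any such covering has $2k-2$ branch points, producing conical singularities in the pullback metric conformal to $g$ and contradicting smoothness. Identification of the extremal degeneration with $k$ touching round spheres then follows by matching the equality case on each limiting bubble. The main obstacle, and the technical heart of the argument, is the bubble-counting bound $m\leqslant k$: ruling out many small bubbles in a maximizing sequence requires a delicate spectral analysis of how $\lambda_k$-eigenfunctions distribute across the components of the limiting bubble tree.
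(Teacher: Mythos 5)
The paper does not prove this theorem; it cites it from~\cite{KNPP}, so your sketch should be measured against the argument there (and against the parallel structure this paper uses for $\mathbb{RP}^2$). Your lower-bound construction via $k$ touching round spheres is correct and standard. Your upper-bound outline, however, has two genuine gaps. First, the ``variational replacement'' you invoke is not available: the correspondence between metrics and harmonic eigenmaps (the El Soufi--Ilias/Nadirashvili result quoted as Theorem~\ref{extremal:thm}) holds only for (conformally) extremal metrics, not for an arbitrary maximizing sequence $g_n$. The actual route, in~\cite{KNPP} and in Section~\ref{indSextremal:sec} of this paper, is to invoke Petrides' conditional existence theorem (Theorem~\ref{Petrides:thm}): assuming inductively that $\Lambda_{k-1}(\mathbb{S}^2)=8\pi(k-1)$, either the spectral gap fails and one is done, or a genuine maximizer exists, and only then does one obtain a \emph{single} harmonic eigenmap $\Phi$ with $\ind_S(\Phi)\leqslant k$ and $E(\Phi)=\tfrac12\bar\lambda_k>4\pi k$. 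The whole upper bound then collapses to the single inequality $\ind_S(\Phi)\geqslant\deg(\Phi)$ for linearly full harmonic maps $\mathbb{S}^2\to\mathbb{S}^n$; no Sacks--Uhlenbeck/Parker bubble-tree analysis of a sequence of harmonic maps is performed.

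Second, your ``combinatorial step'' is the crux, and as stated it does not work. The heuristic ``the first nonzero eigenvalue has multiplicity roughly $3m$, which together with index at most $k$ forces $m\leqslant k$'' conflates multiplicity with index: for $m$ touching round spheres of equal area the near-zero cluster has $m-1$ eigenvalues and the next cluster, of size $3m$, therefore begins at index $m$; it is the \emph{index} $m$ of that cluster, not its multiplicity $3m$, that yields $m\leqslant k$, and your reasoning as written would produce the wrong (and far too strong) bound $3m\lesssim k$. More seriously, turning any such heuristic into a proof requires a superadditivity statement for the spectral index of $\mathcal L_{g,\Phi}$ along bubble-tree degenerations, which is itself a nontrivial theorem rather than a corollary of energy quantization. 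The strict-inequality argument via Riemann--Hurwitz has a similar issue: $g$ being smooth does not by itself forbid a branched covering $\Phi$, since the branch-point conical singularities live in $g_\Phi$, not in $g$; in the actual argument strictness comes from sharpening $\ind_S(\Phi)\geqslant d$ to a strict inequality when $d>1$. So while your proposal names the right objects (Petrides, harmonic eigenmaps, energy quantization, touching spheres in the limit), it leaves the central spectral-index inequality unestablished and replaces it with a heuristic that does not close.
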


Furthermore, in the same paper~\cite{KNPP} it was conjectured that the situation for the projective plane $\mathbb{RP}^2$ is similar to that of $\mathbb{S}^2$. The main result of the present paper is the proof of that conjecture. In particular, we prove the following theorem.

\begin{theorem}
\label{MainTheorem}
For all $k\geqslant 1$ one has $\Lambda_k(\mathbb{RP}^2)=4\pi(2k+1)$. Moreover,
for any smooth metric $g$ on $\mathbb{RP}^2$ and $k>1$ one has
\begin{equation}
\label{MainTheorem:ineq}
\bar\lambda_k(\mathbb{RP}^2,g)<4\pi(2k+1).
\end{equation}
The supremum in the definition of $\Lambda_k(\mathbb{S}^2)$ is achieved on a sequence of metrics converging to a union of $k-1$ identical round spheres and a standard projective plane touching each other, such that the ratio of the areas of the projective plane and the spheres is $3:2$.
\end{theorem}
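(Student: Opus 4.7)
The plan is to establish the two sides of the equality $\Lambda_k(\mathbb{RP}^2)=4\pi(2k+1)$ separately, and then analyze approximating sequences.

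\textbf{Lower bound (construction).} First I would exhibit a sequence of smooth metrics $g_\varepsilon$ on $\mathbb{RP}^2$ of unit area whose first $k$ eigenvalues converge to $4\pi(2k+1)$. The model is the disjoint union of one round $\mathbb{RP}^2$ of area $A_p$ and $k-1$ round spheres each of area $A_s$, chosen so that each component's first nonzero normalized eigenvalue equals the target. Since Li--Yau gives $\bar\lambda_1(\mathbb{RP}^2,\mathrm{round})=12\pi$ and Hersch gives $\bar\lambda_1(\mathbb{S}^2,\mathrm{round})=8\pi$, the equalities $12\pi/A_p=8\pi/A_s=4\pi(2k+1)$ together with $A_p+(k-1)A_s=1$ force $A_p=3/(2k+1)$ and $A_s=2/(2k+1)$, which gives the claimed $3:2$ ratio. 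Then I would realize this configuration as a limit of genuine $\mathbb{RP}^2$-metrics, connecting the components by thin necks, and invoke the standard eigenvalue continuity under such degenerations to conclude that $\bar\lambda_k(\mathbb{RP}^2,g_\varepsilon)\to 4\pi(2k+1)$.

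\textbf{Upper bound for smooth metrics.} This is the main content. Suppose, for contradiction, that there is a smooth metric $g$ on $\mathbb{RP}^2$ with $\bar\lambda_k(\mathbb{RP}^2,g)\geqslant 4\pi(2k+1)$. Following the Nadirashvili--El Soufi--Ilias framework, a metric achieving a local maximum of $\bar\lambda_k$ is induced (up to rescaling by a $\lambda_k$-eigenfunction) by a harmonic map $\Phi\colon\mathbb{RP}^2\to\mathbb{S}^n$ whose components are $\lambda_k$-eigenfunctions, and whose energy satisfies $E(\Phi)=\tfrac12\bar\lambda_k(g)\geqslant 2\pi(2k+1)$. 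Crucially, the Morse (energy) index of $\Phi$ as a harmonic map is controlled by $k-1$: any variation orthogonal to the first $k$ eigenspaces does not decrease the Rayleigh quotient, hence gives a nonnegative direction for the energy. The goal is now a contradiction: I would show that every harmonic map $\mathbb{RP}^2\to\mathbb{S}^n$ with energy at least $2\pi(2k+1)$ has index strictly greater than $k-1$.

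\textbf{Twistor step (the hard part).} The index bound on harmonic projective planes is where twistor theory enters, and this is the principal obstacle. I would lift $\Phi$ to a holomorphic (or horizontal holomorphic) curve $\tilde\Phi\colon\mathbb{S}^2\to Z$ in the twistor space $Z$ of $\mathbb{S}^n$, equivariantly with respect to the antipodal involution, so that $\Phi$ factors through $\mathbb{RP}^2$. The essential new tool is a twistor correspondence for Jacobi fields: Jacobi fields of $\Phi$ along which the energy Hessian is negative correspond to sections of a holomorphic bundle on the twistor lift with controlled cohomology, and this translates the index problem into a problem about the normal bundle of $\tilde\Phi$. A Riemann--Roch computation for this normal bundle, combined with the area/degree bound $E(\Phi)\geqslant 2\pi(2k+1)$ (and the non-orientable constraint that forces the degree of $\tilde\Phi$ to be odd), should then yield $\mathrm{ind}(\Phi)\geqslant k$, contradicting the index bound from the previous step. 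Establishing this equivariant twistor Jacobi correspondence, and extracting the sharp arithmetic in the $\mathbb{RP}^2$ case (as opposed to the $\mathbb{S}^2$ case treated in \cite{KNPP}), is the step I expect to be most delicate, since the antipodal symmetry restricts the admissible Jacobi fields and the parity of the degree plays a decisive role.

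\textbf{From strict inequality to the characterization of the supremum.} Once strict inequality is established for all smooth metrics, I would take any maximizing sequence $g_n$ with $\bar\lambda_k(\mathbb{RP}^2,g_n)\to 4\pi(2k+1)$ and apply the bubble-tree convergence theory for eigenmetrics (as developed by Petrides and in \cite{KNPP}) to extract a limiting harmonic-map bubble tree. The strict inequality forces the limit to be genuinely degenerate: the total energy $2\pi(2k+1)$ must split among bubbles, each of which is a harmonic sphere of energy $\geqslant 4\pi$ or a harmonic projective plane of energy $\geqslant 6\pi$ (the round Veronese). The arithmetic $2\pi(2k+1)=6\pi+(k-1)\cdot 4\pi$ has a unique solution in nonnegative multiplicities, namely one round $\mathbb{RP}^2$ and $k-1$ round $\mathbb{S}^2$'s, and the area-balance condition reproduces the $3:2$ ratio, completing the proof.
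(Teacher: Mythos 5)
Your overall architecture—reduce to index bounds for harmonic maps $\mathbb{RP}^2\to\mathbb{S}^n$ via the Nadirashvili/El~Soufi--Ilias extremal-metric dictionary, lift antipodally to $\mathbb{S}^2$, and feed the result into twistor theory—does match the paper's strategy in broad outline, and your lower-bound construction and the $3{:}2$ area ratio are exactly right. But there are concrete gaps in the two load-bearing steps.

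First, the passage from ``$\bar\lambda_k(g)\geqslant 4\pi(2k+1)$ for some smooth $g$'' to ``there is an associated harmonic map'' is not automatic: the extremal-metric theorem only produces a harmonic map from a metric that is a \emph{critical point} of $\bar\lambda_k$, and a single metric with a large value need not be one. You need an existence result for maximal metrics, and the one available (Petrides) requires the spectral gap hypothesis $\Lambda_k(\mathbb{RP}^2)>\Lambda_{k-1}(\mathbb{RP}^2)+8\pi$, which is not known a priori. The paper supplies this by running an induction on $k$: the inductive hypothesis $\Lambda_{k}=4\pi(2k+1)$ turns the gap condition into an arithmetic statement that can be contradicted. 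Your proposal does not set up this induction, so the existence step is unjustified.

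Second, and more fundamentally, your index argument conflates the two different indices and then gestures at a mechanism that is not the one that actually closes the argument. When $g=g_\Phi$ is a $\bar\lambda_k$-maximizer, what is controlled by $k$ is the \emph{spectral} index $\ind_S(\Phi)$ (the number of eigenvalues of $\Delta_{g_\Phi}$ below $2$), not the Morse index of $\Phi$ for the energy functional; the constraint $V\cdot\Phi=0$ in the domain of the energy Hessian prevents the direct transfer you describe (``any variation orthogonal to the first $k$ eigenspaces gives a nonnegative direction for the energy''). The paper's actual engine is the pair of inequalities between the two indices: $\ind_S(\Phi)\geqslant(\ind_E(\Phi)+\nul_E(\Phi)-m(2m+1))/(2m+1)$ in one direction, and the much harder $\ind_E(\widetilde\Phi)\geqslant 2(m-1)\ind_S(\widetilde\Phi)$ in the other, proved by showing that the Barbosa $SO(2m{+}1,\mathbb{C})$-action on harmonic spheres preserves the energy nullity (hence index) and descending inductively to the holomorphic $m=1$ case via Ejiri's degeneration. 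These are supplemented by a moduli-space dimension bound for $\nul_E$ and Kotani's bound for $\nul_S$, plus the antipodal halving $\ind_E(\Phi)=\tfrac12\ind_E(\widetilde\Phi)$. Your proposed ``Riemann--Roch for the normal bundle of the twistor lift'' is not the route taken and is not spelled out enough to check: the relevant twistor fields are \emph{not} merely holomorphic sections of the normal bundle (there is an extra first-order horizontality constraint), so the expected cohomological count does not directly give an index bound. Until that step is made precise, the central inequality $\ind_S(\Phi)\geqslant(d-1)/2$ with equality only for $d=3$ is missing, and the contradiction you want does not materialize. Finally, once the strict inequality is in hand the characterization of the degenerating sequence follows from the classical $[CE]$ result and needs no bubble-tree analysis; that machinery is heavier than what is required.
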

\begin{remark}
The fact that the sequence of metrics described in the theorem saturates the bound~\eqref{MainTheorem:ineq} is a well-known fact going back to~\cite{CE}. The main statement of Theorem~\ref{MainTheorem} is the inequality~\eqref{MainTheorem:ineq} itself.
\end{remark}
\begin{remark}
Theorem~\ref{MainTheorem} was previously proved by Li and Yau in~\cite{LiYau} for $k=1$; and by Nadirashvili and Penskoi in~\cite{NadirashviliPenskoi} for $k=2$. The result for $k\geqslant 3$ is new.
\end{remark}
\begin{remark}
Our proof of Theorem~\ref{MainTheorem} is drastically different from that of Theorem~\ref{KNPP:thm} and requires novel ideas outlined in the next section.
\end{remark}

\subsection{Energy index of harmonic maps}
A map $\Phi\colon (M,g)\to (N,h)$ from a Riemannian surface to a Riemannian manifold is called {\em harmonic} if it is a critical point of the energy functional
$$
E_g(\Phi) = \frac{1}{2}\int_M|d\Phi|_g^2\,dv_g.
$$
{\em Energy index} $\ind_E(\Phi)$ is defined to be the Morse index of $\Phi$ as a critical point of $E_g(\Phi)$, see Definition~\ref{indE:def} for a more precise formulation.

There is a classical connection established in~\cite{NadirashviliTorus, ESI} between extremal metrics for eigenvalue functionals and harmonic maps $\Phi\colon (M,g)\to \mathbb{S}^n\subset\mathbb{R}^{n+1}$, where $\mathbb{S}^n$ is equipped with the standard round metric. Let us briefly recall this connection in case $M=\mathbb{S}^2$, for more details see Section~\ref{indSextremal:sec} below. It follows from the Euler-Lagrange equation for the harmonic map that the pull-backs of coordinate functions on $\mathbb{R}^{n+1}$ are eigenfunctions with eigenvalue $2$ of the operator $\Delta_{g_\Phi}$, where $g_\Phi = \frac{1}{2}|d\Phi|^2_gg$. The {\em spectral index} $\ind_S(\Phi)$ is defined to be the smallest $k$ such that $\lambda_k(M,g_\Phi)=2$, see Definition~\ref{indS:def} for a more precise formulation. The results of~\cite{ESI} state that the metrics $g_\Phi$ are extremal metrics for the functionals $\bar\lambda_{\ind_S(\Phi)}(\mathbb{S}^2,g)$ and, moreover, an inverse statement holds with some modifications.

In the present paper we introduce a novel method of studying $\Lambda_k(M)$. The main idea is the interplay between the two notions of index of a harmonic map $\Phi$, $\ind_S(\Phi)$ and $\ind_E(\Phi)$. The following proposition holds for any surface $M$, not necessarily $\mathbb{S}^2$ or $\mathbb{RP}^2$.
\begin{proposition}
\label{MainTheorem:ESintro}
Let $\Phi\colon (M,g)\to\mathbb{S}^n$ be a linearly full harmonic map, i.e. the image $\Phi(M)$ is not contained in any proper equatorial subsphere. Then one has 
$$
\ind_S(\Phi)\geqslant \frac{\ind_E(\Phi)}{n+1}.
$$ 
\end{proposition}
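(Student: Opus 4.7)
My plan is to prove the equivalent inequality $\ind_E(\Phi) \leq (n+1)\,\ind_S(\Phi)$ by observing that in the conformally rescaled metric $g_\Phi$ the Jacobi quadratic form for the energy decouples into $(n+1)$ independent copies of a single scalar quadratic form whose negativity is governed exactly by $\ind_S(\Phi)$.

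First I would compute the second variation of $E_g$ at the harmonic map $\Phi$. Identifying a section of $\Phi^{*}T\mathbb{S}^n$ with an $\mathbb{R}^{n+1}$-valued map $V$ satisfying the linear constraint $V\cdot\Phi\equiv 0$, a direct computation (using $\Phi\cdot d\Phi = 0$ and differentiating the constraint) gives
\[
E''_g(\Phi)[V] \;=\; \int_M \bigl(|dV|^2_g - |d\Phi|^2_g\,|V|^2\bigr)\,dv_g.
\]
Using the conformal invariance of Dirichlet energy in dimension two together with $dv_{g_\Phi} = \tfrac12 |d\Phi|^2_g\,dv_g$, this becomes the clean formula
\[
E''_g(\Phi)[V] \;=\; \int_M \bigl(|dV|^2_{g_\Phi} - 2|V|^2\bigr)\,dv_{g_\Phi} \;=\; \sum_{j=1}^{n+1} q(v^j),
\]
where $v^j := \langle V, e_j\rangle$ are the Euclidean coordinates of $V$ and
\[
q(u) \;:=\; \int_M \bigl(|du|^2_{g_\Phi} - 2u^2\bigr)\,dv_{g_\Phi}
\]
is a scalar quadratic form. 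This decoupling in $g_\Phi$ is the conceptual heart of the argument.

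Next I would count negative directions. The scalar form $q$ is the quadratic form associated with $\Delta_{g_\Phi} - 2$, so the dimension of its maximal negative-definite subspace equals the number of eigenvalues of $\Delta_{g_\Phi}$ strictly less than $2$, which is exactly $\ind_S(\Phi)$ by definition. Let $\tilde Q(V) := \sum_j q(v^j)$ denote the unconstrained quadratic form on all $\mathbb{R}^{n+1}$-valued functions on $M$; since this is an orthogonal direct sum of $(n+1)$ copies of $q$, its Morse index is exactly $(n+1)\,\ind_S(\Phi)$.

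Finally, the Jacobi form $Q := E''_g(\Phi)[\,\cdot\,]$ is simply the restriction of $\tilde Q$ to the closed subspace $H := \{V : V\cdot\Phi = 0\}$. Restricting a quadratic form to a subspace cannot increase the dimension of a maximal negative-definite subspace (elementary min-max), so
\[
\ind_E(\Phi) \;\leq\; \ind(\tilde Q) \;=\; (n+1)\,\ind_S(\Phi),
\]
which is the desired inequality. I do not anticipate substantial obstacles: everything reduces to the decoupling identity above, after which the conclusion is a one-line spectral inequality. The linear-fullness hypothesis is not used in the argument itself; it only ensures that the ambient dimension $n$ in the bound is as small (hence as sharp) as possible.
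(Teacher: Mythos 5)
Your proof is correct and is essentially the same argument as the paper's: both rest on the observation that in the coordinates of $\mathbb{R}^{n+1}$ the energy index form decouples as $Q_{\Phi,E}(V)=\sum_{i}Q_{\Phi,S}(V^i)$, so that the unconstrained form has index $(n+1)\ind_S(\Phi)$, and restricting to the subspace $\{V\cdot\Phi=0\}$ cannot increase the index. The paper phrases the final step as a dimension-count contradiction (choosing $X$ in the negative cone of $Q_E$ with all components orthogonal to the negative space of $Q_S$), while you phrase it directly via the min-max monotonicity of index under restriction; these are the same elementary fact.
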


The proof of this proposition is fairly straightforward and is inspired by~\cite[Section 6]{FS}. In case $M=\mathbb{S}^2$ or $M=\mathbb{RP}^2$ one can prove a more general statement, see Theorem~\ref{indnul:thm}. The proof of Theorem~\ref{indnul:thm} is more complicated and uses twistor theory, see Section~\ref{twistor:sec}.

The main reason our proof of Theorem~\ref{MainTheorem} works is that for $M=\mathbb{S}^2$ we are able to prove an inequality in the opposite direction.

\begin{theorem}
\label{MainTheorem:SEintro}
Let $\Phi\colon\mathbb{S}^2\to\mathbb{S}^n$ be a linearly full harmonic map. Then
$$
(n-2)\ind_S(\Phi)\leqslant \ind_E(\Phi).
$$
\end{theorem}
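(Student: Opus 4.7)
The plan is to exhibit an explicit $(n-2)\,\ind_S(\Phi)$-dimensional subspace of sections of $\Phi^\ast T\mathbb{S}^n$ on which the Hessian of the energy is strictly negative. Fix an $L^2(g_\Phi)$-orthonormal family $\phi_0,\dots,\phi_{\ind_S(\Phi)-1}$ of eigenfunctions of $\Delta_{g_\Phi}$ with eigenvalues $\lambda_j<2$, and for each $j$ and each ambient vector $e\in\mathbb{R}^{n+1}$ set
\[
V_{j,e}\,:=\,\phi_j\bigl(e-\langle e,\Phi\rangle\,\Phi\bigr),
\]
a smooth tangent section of $\Phi^\ast T\mathbb{S}^n$. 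Varying $j$ and $e$ produces an $(n+1)\ind_S(\Phi)$-parameter family of candidate variations from which I aim to extract $(n-2)\,\ind_S(\Phi)$ genuinely negative directions.

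Using the conformal invariance of $E$ in dimension two together with the vector-valued identity $\Delta_{g_\Phi}\Phi=2\Phi$ (equivalent to the ambient coordinates being $\Delta_{g_\Phi}$-eigenfunctions with eigenvalue $2$), a direct integration-by-parts calculation shows that for each fixed $j$ the Hessian of $E$ restricted to the $(n+1)$-dimensional space $\{V_{j,e}\}_{e\in\mathbb{R}^{n+1}}$ is represented by the symmetric bilinear form
\[
Q_j(e,e')\,=\,(\lambda_j-2)\langle e,e'\rangle\,+\,\langle M_j\,e,\,e'\rangle,
\]
where $M_j$ is the positive semidefinite $(n+1)\times(n+1)$ matrix
\[
M_j[i,i']\,=\,\int_M\phi_j^2\Bigl((4-\lambda_j)\,\Phi^i\Phi^{i'}\,+\,\langle\nabla\Phi^i,\nabla\Phi^{i'}\rangle_{g_\Phi}\Bigr)\,dv_{g_\Phi}.
\]
Conformality of the harmonic map $\Phi$ (automatic in dimension two) ensures that the pointwise integrand is supported on the $3$-plane in $\mathbb{R}^{n+1}$ spanned by $\Phi(x)$ and $d\Phi_x(T_xM)$, and one computes $\operatorname{tr}(M_j)=6-\lambda_j$.

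The crux is to show that $M_j$ has at most three eigenvalues greater than or equal to $2-\lambda_j$; equivalently, that $Q_j$ has at least $n-2$ strictly negative eigenvalues. For small $\lambda_j$ this is immediate from the trace: four eigenvalues at or above $2-\lambda_j$ would contribute at least $4(2-\lambda_j)$, exceeding the available $6-\lambda_j$ as long as $\lambda_j<2/3$. For larger $\lambda_j$ the trace estimate alone is insufficient, and here I would invoke the twistor correspondence for Jacobi fields established later in the paper. Under this correspondence, the pointwise $3$-plane of $\Phi$ matches the horizontal distribution of the twistor projection over $\mathbb{S}^n$, and the three non-negative eigendirections of $M_j$ should be identified with Jacobi fields of $\Phi$ arising from the natural $\mathrm{PSL}(2,\mathbb{C})$-action on the domain $\mathbb{S}^2$; on the orthogonal complement $Q_j$ is strictly negative, providing the $n-2$ negative directions. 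Finally, $L^2(g_\Phi)$-orthogonality of $\{\phi_j\}$ together with the linearly full hypothesis on $\Phi$ guarantee that the negative subspaces extracted for different $j$'s are linearly independent, yielding $(n-2)\,\ind_S(\Phi)$ negative directions in total.

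The main obstacle is the refined eigenvalue count of $M_j$ in the regime $\lambda_j\in[2/3,2)$, where the naive trace bound breaks down. Overcoming this requires the precise twistor description of the three "positive" eigendirections as horizontal infinitesimal deformations of the twistor lift, together with a rigidity statement ensuring that no additional positive directions appear when $\Phi$ is linearly full. An alternative, possibly cleaner route is to transfer the entire problem to the twistor space $Z$ of $\mathbb{S}^n$ and bound the relevant cohomology groups via Riemann--Roch on $\mathbb{S}^2=\mathbb{CP}^1$, where the factor $n-2$ emerges from the complex fiber dimension of $Z\to\mathbb{S}^n$.
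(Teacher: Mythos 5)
Your approach is genuinely different from the paper's, but it contains an admitted gap that it does not close. You propose to build negative directions for the energy Hessian directly, by taking test sections $V_{j,e}=\phi_j\bigl(e-\langle e,\Phi\rangle\Phi\bigr)$ with $\phi_j$ an eigenfunction of eigenvalue $\lambda_j<2$, and then arguing that the resulting $(n+1)\times(n+1)$ form $Q_j(e,e')=(\lambda_j-2)\langle e,e'\rangle+\langle M_je,e'\rangle$ has at least $n-2$ negative eigenvalues. Your trace bound $\operatorname{tr}(M_j)=6-\lambda_j$ yields this only when $\lambda_j<2/3$; for $\lambda_j\in[2/3,2)$ you concede that the argument breaks down and offer only a vague hope that ``the twistor correspondence for Jacobi fields'' or ``Riemann--Roch on the twistor space'' would identify the three non-negative directions. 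Neither suggestion is carried out, and it is not at all clear either would work: $M_j$ is an integral of pointwise rank-$3$ matrices, and when $\Phi$ is linearly full it typically has full rank $n+1$; the claim that at most three eigenvalues reach $2-\lambda_j$ is exactly what needs to be proven and has no obvious twistor-theoretic reformulation.

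There is a second, unacknowledged gap. Even granting that each block form $Q_j$ has $n-2$ negative eigenvalues, linear independence of the spaces $\{V_{j,e}\}$ for different $j$ does not give you a $(n-2)\ind_S(\Phi)$-dimensional subspace on which $Q_E$ is negative. You need the full form $Q_E$ to be block-diagonal in $j$ (or the cross terms to be controlled), and a direct integration by parts shows the cross terms $Q_E(V_{j,e},V_{k,e'})$ for $j\neq k$ involve quantities like $\int\phi_j\phi_k\,\Phi^i\Phi^{i'}\,dv$ and $\int\phi_j\,(e'\cdot\Phi)\,\nabla\phi_k\cdot(e\cdot\nabla\Phi)\,dv$, which do not vanish by $L^2$-orthogonality of the $\phi_j$'s alone.

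For contrast, the paper avoids pointwise eigenvalue counting entirely. It exploits the $SO(2m+1,\mathbb{C})$-action on the moduli space of linearly full harmonic spheres: by Ejiri's theorem one can deform $\Phi$ inside this orbit so that it degenerates to a map $\Phi_\infty$ into a lower-dimensional equatorial subsphere $\mathbb{S}^{2(m-1)}$, while $\ind_S$ and $\nul_S$ are constant along the deformation. The paper's Proposition~\ref{invariance:prop} (established via the first-order twistor correspondence) shows $\nul_E$ is also constant, hence $\ind_E$ can only drop at the limit. A simple range-decomposition lemma then gives $\ind_E(\Phi_\infty)=2\ind_S(\Phi_\infty)+\ind_E(\widetilde\Phi_\infty)$ for the non-full map, and iterating $m-1$ times yields the factor $2(m-1)=n-2$. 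The factor $n-2$ thus arises from stripping off codimension two at a time, not from an a priori bound on how many ``non-negative directions'' a fixed test family can have. Your construction might ultimately furnish an alternative proof, but as written the crucial eigenvalue count is missing, and you would also need to deal with the cross terms.
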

We believe that the inequalities of this type have a variety of applications not only to spectral theory but also to the study of minimal surfaces and harmonic maps. We demonstrate this by improving the known upper bounds for the energy index of harmonic spheres in $\mathbb{S}^n$.

Recall~\cite{Barbosa,Calabi} that for any linearly full harmonic map $\Phi\colon\mathbb{S}^2\to\mathbb{S}^n$ one has $n=2m$ and $E_g(\Phi) = 4\pi d$, where $d$ is the integer referred to as {\em harmonic degree} (or, simply, {\em degree}) of $\Phi$.
\begin{theorem}
\label{MainTheorem:indintro}
Let $\Phi\colon\mathbb{S}^2\to\mathbb{S}^{2m}$ be a linearly full harmonic map of degree $d$. Then one has the following inequality
$$
\ind_E(\Phi)\geqslant 2(m-1)(2d - [\sqrt{8d+1}]_{\mathrm{odd}}+2),
$$
where $[x]_{\mathrm{odd}}$ denotes the largest odd number not exceeding $x$.
\end{theorem}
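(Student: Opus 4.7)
The leading factor $2(m-1)=n-2$ on the right-hand side points directly to Theorem~\ref{MainTheorem:SEintro} as the driving tool. Applied to $\Phi\colon\mathbb{S}^2\to\mathbb{S}^{2m}$ (with $n=2m$), it gives
$$
\ind_E(\Phi)\geqslant (n-2)\ind_S(\Phi)=2(m-1)\ind_S(\Phi),
$$
so the task reduces to the purely spectral estimate
$$
\ind_S(\Phi)\geqslant 2d-[\sqrt{8d+1}]_{\mathrm{odd}}+2.
$$
Recall that $\ind_S(\Phi)$ equals the number of eigenvalues of $\Delta_{g_\Phi}$ strictly below $2$, and that $\area_{g_\Phi}(\mathbb{S}^2)=E_g(\Phi)=4\pi d$. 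The isoperimetric bound $\bar\lambda_k\leqslant 8\pi k$ of Theorem~\ref{KNPP:thm} then yields $\lambda_k(g_\Phi)\leqslant 2k/d$, so $\lambda_k(g_\Phi)<2$ whenever $k<d$. This easy half already produces $d$ eigenvalues below $2$, giving $\ind_S(\Phi)\geqslant d$, which is only about half of the target.

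To close the remaining factor-of-two gap, I would invoke the rigidity of linearly full harmonic $2$-spheres. By Calabi's theorem, $\Phi$ lifts to a horizontal holomorphic curve $\psi$ in the twistor space $Z\to\mathbb{S}^{2m}$ and generates a harmonic sequence $\Phi_0,\Phi_1,\dots$ of maps $\mathbb{S}^2\to\mathbb{CP}^{2m}$ whose degrees are coupled by Pl\"ucker-type identities summing to $d$. Additional eigenfunctions of $\Delta_{g_\Phi}$ below $2$ should be constructed from the higher Gauss (directrix) curves of this sequence, and their count controlled by a Pl\"ucker computation on $\mathbb{CP}^1$. The correction $[\sqrt{8d+1}]_{\mathrm{odd}}=2s+1$, where $s$ is the largest integer with $s(s+1)/2\leqslant d$, is exactly Calabi's rigidity lower bound for the degree of a linearly full harmonic $\mathbb{S}^2\to\mathbb{S}^{2s}$; consequently equality in the spectral estimate is approached in the Veronese limit $d=s(s+1)/2$.

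The hardest step is extracting this precise count of additional test functions with Rayleigh quotient strictly below $2$. The twistor correspondence for Jacobi fields established earlier in the paper is the natural bridge: it should translate the spectral problem on $(\mathbb{S}^2,g_\Phi)$ into a count of holomorphic sections of a vector bundle along $\psi$, to which Riemann--Roch on $\mathbb{CP}^1$ applies. The delicate point is matching the degree book-keeping along the Calabi harmonic sequence with the exact combinatorial quantity $2d-[\sqrt{8d+1}]_{\mathrm{odd}}+2$, so that the Veronese maps correctly serve as the (asymptotic) equality models.
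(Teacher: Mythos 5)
Your first reduction is correct and matches the paper: by Theorem~\ref{MainTheorem:SEintro} (applied with $n=2m$), the statement reduces to showing $\ind_S(\Phi)\geqslant 2d-[\sqrt{8d+1}]_{\mathrm{odd}}+2$. However, your plan for proving this spectral estimate diverges from the paper's and does not close. The easy half you give, $\ind_S(\Phi)\geqslant d$ via $\bar\lambda_k(\mathbb{S}^2,g_\Phi)\leqslant 8\pi k$, is fine but delivers roughly half the needed bound (for large $d$ the target is $\approx 2d-2\sqrt{2d}$). Everything after that is a sketch of an approach rather than an argument: constructing ``additional eigenfunctions below $2$'' from directrix curves with count controlled by Pl\"ucker relations and Riemann--Roch is not carried out, and it is not clear that it can be made to produce the exact quantity $2d-[\sqrt{8d+1}]_{\mathrm{odd}}+2$.

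The paper's route to the spectral estimate is genuinely different and relies on three ingredients that your proposal does not surface. First, Ejiri's theorem (Theorem~\ref{Ejiri:thm}): the $SO(2m+1,\mathbb{C})$-action allows one to deform $\Phi$ to a map with the same $\ind_S$ and $\nul_S$ whose image lies in a two-dimensional equatorial subsphere, reducing the problem to holomorphic maps $\mathbb{S}^2\to\mathbb{S}^2$ of degree $d$. Second, the Montiel--Ros / Ejiri--Kotani result (Proposition~\ref{indexS2:prop}): generically $\nul_S=3$ and $\ind_S=2d-1$ for such maps, and when $\nul_S(\Phi)>3$ one uses upper-semicontinuity of $\ind_S+\nul_S$ under an analytic deformation moving off the exceptional analytic set $N_d$ to get $\ind_S(\Phi)+\nul_S(\Phi)\geqslant 2d+2$, i.e.\ Proposition~\ref{indS:prop}: $\ind_S(\Phi)\geqslant 2d-\nul_S(\Phi)+2$. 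Third, Kotani's nullity bound (Proposition~\ref{nulS:prop}): $\nul_S(\Phi)\leqslant [\sqrt{8d+1}]_{\mathrm{odd}}$, obtained by deforming a map of large nullity to a linearly full map into a higher-dimensional sphere and invoking Calabi's degree bound. You correctly sense that the correction term comes from Calabi's bound, but it enters through the \emph{nullity} $\nul_S(\Phi)$, not through a direct eigenvalue count. As written, your proposal has a gap precisely where the substance lies: there is no mechanism to produce the missing $d-[\sqrt{8d+1}]_{\mathrm{odd}}+2$ eigenvalues below $2$, and the isoperimetric estimate cannot supply them.
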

\begin{remark}
\label{harmmin:rem}
Any harmonic map $\mathbb{S}^2\to\mathbb{S}^{2m}$ is weakly conformal and, therefore, is a branched minimal immersion. In particular, the area index coincides with the energy index. Thus, Theorem~\ref{MainTheorem:indintro} gives the same bound for the area index of branched minimal spheres in $\mathbb{S}^n$.
\end{remark}

Let us compare Theorem~\ref{MainTheorem:indintro} to similar results in the literature. By Remark~\ref{harmmin:rem}, the energy index coincides with the area index. The existing literature on the subject of area index of minimal submanifolds in spheres is largely concerned with unbranched hypersurfaces. The only general results in codimension greater than $1$ we were able to find are~\cite{MU, EjiriIndex}. In~\cite{MU} the authors show that the index of any 
linearly full unbranched minimal sphere of degree $d$ in $\mathbb{S}^4$ is equal to $4d-2$. In~\cite{EjiriIndex} the author proved that the index of any linearly full unbranched minimal sphere of degree $d$ in $\mathbb{S}^{2m}$ is at least $4d+2(m-3)$. The bound of Theorem~\ref{MainTheorem:indintro} is weaker than both these results in case $m=2$, but is stronger than the bound of~\cite{EjiriIndex} once $m>2$ and holds for branched surfaces as well.

For any harmonic map $\Phi\colon\mathbb{RP}^2\to\mathbb{S}^{2m}$, its pre-composition with an antipodal projection yields a harmonic map $\widetilde\Phi\colon\mathbb{S}^2\to\mathbb{S}^{2m}$. We use twistor correspondence to show the relation between $\ind_E(\Phi)$ and $\ind_E(\widetilde\Phi)$.

\begin{proposition}
\label{RP2:thmintro}
Let $\Phi\colon\mathbb{RP}^2\to\mathbb{S}^{2m}$ be a linearly full harmonic map and let $\widetilde\Phi\colon\mathbb{S}^2\to\mathbb{S}^{2m}$ be its antipodal lift. Then one has
$$
\ind_E(\Phi) = \frac{1}{2}\ind_E(\widetilde \Phi).
$$
\end{proposition}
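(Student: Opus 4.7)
The plan is to split the variations of $\widetilde\Phi$ by the $\mathbb{Z}/2$-action of the antipodal involution $a\colon\mathbb{S}^2\to\mathbb{S}^2$. Because $\widetilde\Phi\circ a=\widetilde\Phi$, pullback by $a$ defines an involution on sections of $\widetilde\Phi^{-1}T\mathbb{S}^{2m}$ that commutes with the Jacobi operator $J_{\widetilde\Phi}$. Writing $V^\pm$ for the corresponding $\pm 1$-eigenspaces we get
$$
\ind_E(\widetilde\Phi)=\ind(J_{\widetilde\Phi}|_{V^+})+\ind(J_{\widetilde\Phi}|_{V^-}).
$$
Sections in $V^+$ are the pullbacks along $\pi\colon\mathbb{S}^2\to\mathbb{RP}^2$ of sections of $\Phi^{-1}T\mathbb{S}^{2m}$, and on $V^+$ the Hessian of $E_g(\widetilde\Phi)$ agrees, up to the global factor $2$ coming from the double cover, with the Hessian of $E_g(\Phi)$. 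Hence $\ind(J_{\widetilde\Phi}|_{V^+})=\ind_E(\Phi)$, and the proposition reduces to $\ind(J_{\widetilde\Phi}|_{V^-})=\ind_E(\Phi)$.

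For this remaining equality I would invoke the twistor correspondence for Jacobi fields developed earlier in the paper. Let $Z$ denote the twistor space of $\mathbb{S}^{2m}$ and $\tilde\psi\colon\mathbb{S}^2\to Z$ the horizontal holomorphic twistor lift of $\widetilde\Phi$. The antipodal map of $\mathbb{S}^{2m}$ lifts to a natural involution $\tau$ of $Z$, and the identity $\widetilde\Phi\circ a=\widetilde\Phi$ forces the equivariance $\tilde\psi\circ a=\tau\circ\tilde\psi$: any competing twistor lift on the second sheet differs by reversal of the complex structure on $T\mathbb{S}^{2m}$, which is exactly the action of $\tau$. The twistor correspondence then realises the negative eigenspace of $J_{\widetilde\Phi}$ as holomorphic sections of an associated $\tau$-equivariant bundle $\mathcal{E}\to\mathbb{S}^2$, and the induced $\mathbb{Z}/2$-action on these sections recovers the splitting $V^+\oplus V^-$.

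The central technical task, and the main expected obstacle, is to exhibit an isomorphism between the $+1$- and $-1$-eigenspaces of this action on $H^0(\mathcal{E})$ that intertwines the Hessian quadratic forms. The candidate is multiplication by a section of a square root of a suitable line bundle coming from the twistor geometry, exploiting the fact that the two $U(m)$-orbits in the fibre of $Z\to\mathbb{S}^{2m}$ over antipodal points are exchanged by $\tau$ (so that tensoring by the associated characteristic class reverses the sign of the $\tau$-action while preserving holomorphicity). Once such an isomorphism is constructed, $V^+$ and $V^-$ become isometric for the Jacobi form, yielding $\ind(J_{\widetilde\Phi}|_{V^-})=\ind_E(\Phi)$ and completing the proof. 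The subtle point is to verify that this multiplication operator is globally well defined on $\mathbb{S}^2$ and that it preserves rather than rescales the Hessian; the remaining steps amount to routine bookkeeping in the twistor dictionary.
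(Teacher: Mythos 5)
Your first reduction is the same as the paper's: decompose sections of $\widetilde\Phi^{-1}T\mathbb{S}^{2m}$ into $\pm1$-eigenspaces $V^{\pm}$ of the involution induced by the antipodal map $\sigma$, note that $\sigma^*$ commutes with $J_{\widetilde\Phi}$, and identify $\ind(J_{\widetilde\Phi}|_{V^+})=\ind_E(\Phi)$. That part is correct. The remaining task is to show $\ind(J_{\widetilde\Phi}|_{V^-})=\ind(J_{\widetilde\Phi}|_{V^+})$, and here your proposal has a genuine gap.

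You propose to realise the negative eigenspace of $J_{\widetilde\Phi}$ as holomorphic sections of a $\tau$-equivariant bundle over the twistor lift, and then tensor with a square root line bundle to flip the $\mathbb{Z}/2$-sign. But the twistor correspondence for Jacobi fields established in the paper (Theorem~\ref{1stordertwistor:thm}) identifies only the \emph{kernel} of $J_{\widetilde\Phi}$ with holomorphic twistor fields; it says nothing about the negative eigenspace, and there is no general way to realise the negative eigenspace of a Schr\"odinger-type operator as $H^0$ of a holomorphic bundle. Without such an identification, the proposed square-root-line-bundle isomorphism has no domain or codomain, and the argument does not get off the ground.

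The paper achieves the required equality $\ind(J_{\widetilde\Phi}|_{V^-})=\ind(J_{\widetilde\Phi}|_{V^+})$ by a much more direct device: the harmonic sequence decomposition $L_0^\perp = L_{<0}\oplus L_{>0}$ defines a complex structure on the real bundle $\underline{\widetilde\Phi}^\perp$ (multiplication by $i$ on $L_{>0}$, i.e.\ $V\mapsto V^*:=2\Im(\pi_{>0}V)$). Lemma~\ref{Jacobi:lemma} shows $L_{<0}$ and $L_{>0}$ are $J$-invariant, so this conjugation commutes with the Jacobi operator and preserves all its eigenspaces; and since $\sigma$ is orientation-reversing on $\mathbb{S}^2$, $\sigma^*$ swaps $L_{<0}$ and $L_{>0}$, so the conjugation \emph{anti}commutes with $\sigma^*$. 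Together with $(V^*)^*=-V$ this shows the $\sigma$-even and $\sigma$-odd parts of each eigenspace of $J_{\widetilde\Phi}$ have equal dimension, which is exactly what you need. This is the ingredient missing from your sketch; it is what your vague appeal to "reversal of the complex structure" is gesturing at, but it has to be made explicit at the level of the bundle $L_0^\perp$, not via cohomology of a twistor bundle.
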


Finally, we make a remark about our proof of Theorem~\ref{MainTheorem:SEintro} that could be of interest to specialists in twistor theory. One of the main technical steps in our proof of Theorem~\ref{MainTheorem:SEintro} is a first order twistor correspondence for harmonic maps to $\mathbb{S}^{2m}$ which we prove in Section~\ref{1twistor:sec}. In other words, we show that each Jacobi field can be lifted to a vector field on the twistor space satisfying certain properties. We call these lifts {\em twistor fields}. This is an extension of the results in~\cite{LW}, where the authors proved a similar first order correspondence for harmonic spheres in $\mathbb{S}^4$.

\subsection{Plan of the proof} Let us outline the plan and the main ideas of the proof of Theorem~\ref{MainTheorem}. First, we use existence theory for maximal metrics developed in~\cite{Petrides, Petridesk} to reduce Theorem~\ref{MainTheorem} to an inequality between the degree and the spectral index of a harmonic map $\Phi\colon \mathbb{RP}^2\to\mathbb{S}^{2m}$. To prove this inequality, we apply Theorem~\ref{MainTheorem:indintro} to the lift $\widetilde\Phi$ of $\Phi$ and, finally, combine Propositions~\ref{MainTheorem:ESintro} and~\ref{RP2:thmintro}.

Theorem~\ref{MainTheorem:SEintro} is the main ingredient that allows this plan to come to fruition. Its proof relies on the geometry of the moduli space of harmonic maps $\mathbb{S}^2\to\mathbb{S}^{2m}$. In particular, the moduli space admits an action of the group $SO(2m+1,\mathbb{C})$ defined via twistor correspondence. Our main observation is that this action preserves the energy index. This observation is inspired by the work of Ejiri~\cite{Ejiri}, where  $SO(2m+1,\mathbb{C})$-invariance of the spectral index is proved. As a result, a choice of appropriate elements of $SO(2m+1,\mathbb{C})$ leads to the proof of Theorem~\ref{MainTheorem:SEintro} by induction on $m$.

\subsection*{Organization of the paper} The paper is organized as follows. In Section~\ref{indSextremal:sec} we recall the background on the the theory of maximal metrics, define spectral index and reduce Theorem~\ref{MainTheorem} to an inequality between the degree and the spectral index. Section~\ref{Energy:sec} is devoted to energy index and inequalities between indices. Theorem~\ref{MainTheorem:ESintro} is proved in Section~\ref{indSE:sec}. In Sections~\ref{indEI:sec} and~\ref{indEII:sec} we show Theorems~\ref{MainTheorem:SEintro} and~\ref{MainTheorem:indintro}. Finally, Section~\ref{proof:sec} contains the proof of Theorem~\ref{MainTheorem}.

The proofs of some of the auxiliary statements in Section~\ref{Energy:sec} are left to Section~\ref{twistor:sec}.
In particular, in Section~\ref{Jacobi:sec} we prove Theorem~\ref{RP2:thmintro} and in Section~\ref{1twistor:sec} we establish the first order twistor correspondence.
\subsection*{Acknowledgements} The author is grateful to A.~Fraser, A.~Penskoi, I.~Polterovich and R.~Schoen for fruitful discussions. The author thanks V.~Medvedev and I.~Polterovich for remarks on the preliminary version of the manuscript.



\section{Spectral index and extremal metrics}

\label{indSextremal:sec}

\subsection{Preliminaries on harmonic maps from the sphere and projective plane}
Let $(M,g)$ be a Riemannian surface and let $\Phi\colon (M,g)\to\mathbb{S}^n$ be a harmonic map. It is equivalent to the following equation, see e.g.~\cite[Example 4.14]{EL},
\begin{equation}
\label{harmonic:equation}
\Delta_g\Phi=|\nabla\Phi|_g^2\Phi,
\end{equation}
where $\Delta_g$ is applied component-wise. The energy $E_g(\Phi)$ of $\Phi$ satisfies
$$
E_g(\Phi) = \frac{1}{2}\int_M|\nabla\Phi|^2_g\,dv_g = \area_{\left(\frac{1}{2}|\nabla\Phi|_g^2\,g\right)}(M).
$$
The map $\Phi$ is called {\em linearly full} if its image is not contained in a proper equatorial subsphere of $\mathbb{S}^n$. Recall that the energy is invariant under conformal change of metric on $M$ and, therefore, the harmonicity of $\Phi$ depends only on the conformal class of the metric $g$.

If $M=\mathbb S^2$ or $M=\mathbb{RP}^2$, then up to a diffeomorphism there is a unique conformal class of metrics on $M$. Thus, without loss of generality we may assume that in either case $g$ is the standard round metric and, furthermore, we will often suppress the metric $g$ in the notations. The theory of harmonic maps $\mathbb{S}^2\to\mathbb{S}^n$ is a classical subject of differential geometry. Below we recall some results of this theory, for more detailed exposition see Section~\ref{twistor:sec} below.
\begin{theorem}[Calabi~\cite{Calabi}, Barbosa~\cite{Barbosa}]
\label{Barbosa:thm}
Let $\Phi\colon \mathbb{S}^2\to\mathbb{S}^n$ be a linearly full harmonic map. Then the following holds,
\begin{itemize}
\item $\Phi$ is weakly conformal, i.e. $\Phi$ is a branched minimal immersion;
\item $n=2m$;
\item $E(\Phi) = 4\pi d$, where $d$ is a natural number satisfying $d\geqslant \frac{m(m+1)}{2}$.
\end{itemize}
The number $d$ is referred to as {\em degree} of the harmonic map $\Phi$. 
\end{theorem}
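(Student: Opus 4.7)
The plan is to exploit the complex structure of $\mathbb{S}^2$ to turn the harmonic map equation into a collection of holomorphic objects, and then harvest quantization from the triviality of holomorphic differentials on $\mathbb{S}^2$. I would tackle the three bullets in order.

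For the first bullet, I would use the classical Hopf differential argument. In a local conformal coordinate $z=x+iy$ on $\mathbb{S}^2$, define the $(2,0)$-part of the pullback metric,
$$
h := \left(\Phi_x\cdot\Phi_x - \Phi_y\cdot \Phi_y - 2i\,\Phi_x\cdot\Phi_y\right) dz^2,
$$
where $\cdot$ denotes the Euclidean inner product restricted to $T\mathbb{S}^n$. A direct calculation using equation~\eqref{harmonic:equation} shows that harmonicity of $\Phi$ is equivalent to $\bar\partial h = 0$, so $h$ is a holomorphic quadratic differential on $\mathbb{S}^2$. Since $\deg K_{\mathbb{S}^2}^{\otimes 2}=-4<0$, we have $H^0(\mathbb{S}^2,K^{\otimes 2})=0$, forcing $h\equiv 0$. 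This is precisely weak conformality, which together with harmonicity gives a branched minimal immersion.

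For the second and third bullets, I would implement Calabi's isotropic construction. Viewing $\Phi$ as a map into $\mathbb{R}^{2m+1}\subset \mathbb{R}^{n+1}$ and extending bilinearly to $\mathbb{C}^{n+1}$, consider the iterated derivatives $\Phi^{(k)}:=\partial_z^k\Phi$ in a local complex coordinate. Weak conformality is exactly the statement $\Phi^{(1)}\cdot\Phi^{(1)}=0$, and differentiating this relation together with the harmonic map equation (which in conformal coordinates reads $\partial_{\bar z}\Phi^{(1)}=-|\Phi^{(1)}|^2\Phi$) shows by induction that $\Phi^{(j)}\cdot\Phi^{(k)}=0$ for all $j,k\geqslant 1$. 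Hence the complex subspaces $L_k:=\mathrm{span}\{\Phi^{(1)},\ldots,\Phi^{(k)}\}$ are totally isotropic for the complex bilinear form on $\mathbb{C}^{n+1}$, and sit inside $\Phi^\perp\otimes\mathbb{C}$. Because maximal isotropic subspaces of $\Phi^\perp\otimes\mathbb{C}\cong\mathbb{C}^n$ have dimension $\lfloor n/2\rfloor$, the sequence $L_k$ stabilizes; let $m$ be the smallest index with $L_m=L_{m+1}$. Linearly fullness of $\Phi$ forces $\mathbb{C}\Phi\oplus L_m\oplus\overline{L_m}=\mathbb{C}^{n+1}$, which requires $n+1=2m+1$, i.e., $n=2m$.

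For the energy quantization, each step of the filtration $L_k/L_{k-1}$ defines a holomorphic line bundle over $\mathbb{S}^2$, and a short calculation using the structure equations (the ``Frenet frame'' of the directrix) yields
$$
E(\Phi)=2\pi\sum_{k=1}^{m}\deg(L_k/L_{k-1})+\text{boundary terms that cancel},
$$
so that $E(\Phi)=4\pi d$ with $d\in\mathbb{Z}_{>0}$. To get the sharp lower bound, I would observe that linearly fullness forces each quotient $L_k/L_{k-1}$ to have degree at least $k$: indeed, the $k$-th associated curve must ``turn'' $k$ times before the next isotropic direction can become available, because of the orthogonality constraints accumulated from the earlier derivatives. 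Summing gives $d\geqslant 1+2+\cdots+m=\tfrac{m(m+1)}{2}$.

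The delicate step I expect to be the main obstacle is making precise the lower bound $\deg(L_k/L_{k-1})\geqslant k$: the branch points of $\Phi$ and of the associated curves must be tracked carefully, and the isotropy conditions translate into a subtle ramification bookkeeping. This is exactly the content of Barbosa's original argument, and where twistor-theoretic language (as developed later in the paper) offers the cleanest repackaging.
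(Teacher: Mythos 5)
The paper gives no proof of Theorem~\ref{Barbosa:thm}; it is cited from Calabi and Barbosa, with the supporting machinery of harmonic sequences and the ramification inequality~\eqref{ramification} only recalled later in Section~\ref{twistor:sec}. Your outline retraces the classical argument of those references, and the Hopf differential step for weak conformality is correct.

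The isotropy step, however, has a genuine gap. You write that differentiating $\Phi^{(1)}\cdot\Phi^{(1)}=0$ together with the harmonic map equation ``shows by induction that $\Phi^{(j)}\cdot\Phi^{(k)}=0$ for all $j,k\geqslant 1$.'' This is not a formal consequence of those two relations. At each order $N=j+k$, differentiating in $z$ only tells you that the various $\Phi^{(j)}\cdot\Phi^{(k)}$ with $j+k=N$ agree up to sign; and differentiating in $\bar z$, using $\partial_{\bar z}\Phi^{(1)}=-|\Phi^{(1)}|^2\Phi$ and the lower-order relations, shows that $\bigl(\Phi^{(j)}\cdot\Phi^{(k)}\bigr)\,dz^{N}$ is a \emph{holomorphic} $N$-differential. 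Only then does $H^0(\mathbb{S}^2,K^{\otimes N})=0$ kill it, and you must invoke this vanishing afresh at every order $N\geqslant 2$, not just once for the Hopf differential. This is not cosmetic: the Clifford minimal torus $\Phi(u,v)=\tfrac{1}{\sqrt 2}(\cos u,\sin u,\cos v,\sin v)$ in $\mathbb{S}^3$ is weakly conformal and harmonic, yet $\Phi^{(2)}\cdot\Phi^{(2)}\equiv\tfrac1{16}\ne 0$, so ``weakly conformal $\Rightarrow$ isotropic'' genuinely fails off the sphere. Separately, your quantization step is internally inconsistent as written: $E(\Phi)=2\pi\sum_k\deg(L_k/L_{k-1})$ together with $\deg(L_k/L_{k-1})\geqslant k$ yields $E\geqslant\pi m(m+1)$, which is half of the asserted $4\pi\cdot\tfrac{m(m+1)}{2}$; the correct input from~\eqref{ramification} is that consecutive Chern numbers of the harmonic sequence on $\mathbb{S}^2$ differ by at least $2$, and assembling this into $E(\Phi)=4\pi d$ with $d\geqslant\tfrac{m(m+1)}{2}$ is precisely the bookkeeping you rightly flag as the delicate part.
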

\begin{remark}
Sometimes $d$ is referred to as {\em twistor} or {\em harmonic} degree of $\Phi$.
\end{remark}

Barbosa also defined the action of the group $SO(2m+1,\mathbb{C})$ on the space of all linearly full harmonic maps $\mathbb{S}^2\to\mathbb{S}^{2m}$. We will recall the definition of this action in Section~\ref{soaction:sec}, for now it is sufficient to know that this action preserves the degree of $\Phi$.

Assume now that we have a harmonic map $\Phi\colon \mathbb{RP}^2\to\mathbb{S}^n$. If $\pi\colon\mathbb{S}^2\to\mathbb{RP}^2$ is an antipodal projection, we let $\widetilde\Phi\colon\mathbb{S}^2\to\mathbb{S}^n$ be the composition $\widetilde\Phi = \Phi\circ\pi$. Applying Theorem~\ref{Barbosa:thm} to $\widetilde \Phi$ and noting that $2E(\Phi) = E(\widetilde \Phi)$, one obtains the following theorem.

\begin{theorem}
\label{BarbosaRP:thm}
Let $\Phi\colon \mathbb{RP}^2\to\mathbb{S}^n$ be a linearly full harmonic map. Then
\begin{itemize}
\item $\Phi$ is weakly conformal, i.e. $\Phi$ is a branched minimal immersion;
\item $n=2m$;
\item $E(\Phi) = 2\pi d$, where $d$ is a natural number satisfying $d\geqslant \frac{m(m+1)}{2}$.
\end{itemize}
We call $d$ the {\em degree} of the harmonic map $\Phi$.
\end{theorem}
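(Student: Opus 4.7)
The plan is to reduce every assertion to the corresponding statement of Theorem~\ref{Barbosa:thm} by passing to the antipodal double cover $\pi\colon\mathbb{S}^2\to\mathbb{RP}^2$ and working with the lift $\widetilde\Phi=\Phi\circ\pi\colon\mathbb{S}^2\to\mathbb{S}^n$, exactly as suggested in the sentence preceding the statement.

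The first step is to check that $\widetilde\Phi$ is itself a linearly full harmonic map. Since $\pi$ is a local isometry for the round metrics on the standard $\mathbb{S}^2$ and $\mathbb{RP}^2$, composing with $\pi$ commutes locally with $\Delta_g$ and $|\nabla\cdot|_g^2$, so equation~\eqref{harmonic:equation} is preserved; hence $\widetilde\Phi$ is harmonic. Linear fullness is immediate from $\widetilde\Phi(\mathbb{S}^2)=\Phi(\mathbb{RP}^2)$ as sets in $\mathbb{S}^n$, so the image of $\widetilde\Phi$ is likewise not contained in any proper equatorial subsphere.

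Applying Theorem~\ref{Barbosa:thm} to $\widetilde\Phi$ then gives the three conclusions for the lift: weak conformality, the even dimension $n=2m$, and $E(\widetilde\Phi)=4\pi\widetilde d$ for some natural number $\widetilde d\geqslant \frac{m(m+1)}{2}$. Weak conformality descends to $\Phi$ because $\pi$ is a local isometry, so the pull-back tensor $\Phi^*g_{\mathbb{S}^n}$ is conformal to the round metric on $\mathbb{RP}^2$ off the branch set; this makes $\Phi$ a branched minimal immersion as well. The dimension statement transfers trivially.

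For the energy, the fact that $\pi$ is a two-to-one local isometry gives $|d\widetilde\Phi|^2_g=|d\Phi|^2_g\circ\pi$ pointwise, and integration yields $E(\widetilde\Phi)=2E(\Phi)$. Thus $E(\Phi)=2\pi\widetilde d$, and setting $d:=\widetilde d$ furnishes a natural number with $d\geqslant \frac{m(m+1)}{2}$, as required. There is no real obstacle in this proof; the only points that require care are the correctness of the factor $2$ in the energy relation and the transfer of the linear fullness and weak conformality properties through $\pi$, both of which are one-line consequences of the fact that $\pi$ is a Riemannian double cover.
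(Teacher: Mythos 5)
Your proof is correct and follows the paper's own (very terse) argument exactly: the paper simply states that one should pass to the antipodal lift $\widetilde\Phi = \Phi\circ\pi$, apply Theorem~\ref{Barbosa:thm}, and use $2E(\Phi)=E(\widetilde\Phi)$. Your write-up just fills in the routine verifications (harmonicity, linear fullness, and weak conformality transfer through the Riemannian double cover) that the paper leaves implicit.
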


\begin{example}
\label{Veronese1}
{\em Veronese maps.} We recall that the spectrum of $\Delta$ on the round $\mathbb{S}^2$ of unit radius is given by numbers $l_m=m(m+1)$, each repeated with multiplicity $2m+1$. The corresponding eigenspaces $\mathcal E_m$ consist of harmonic polynomials of degree $m$ in $\mathbb{R}^3$ restricted to $\mathbb{S}^2$. It turns out (see e.g.~\cite{Calabi}) that there exists a basis $\{f^m_j\}$ of $\mathcal E_m$ such that $\Phi_m = (f^m_1,\ldots, f^m_{2m+1})$ is a linearly full harmonic map $\Phi_m\colon\mathbb{S}^2\to\mathbb{S}^{2m}$. Moreover, the degree $\Phi_m$ saturates the lower bound in Theorem~\ref{Barbosa:thm}, i.e. $\deg(\Phi_m) = \frac{m(m+1)}{2} = \frac{1}{2}l_m$. If $m$ is even, then the map $\Phi_m$ induces a map $\Psi_m\colon\mathbb{RP}^2\to\mathbb{S}^{2m}$, which saturates the lower bound of Theorem~\ref{BarbosaRP:thm}.
\end{example}

In the paper~\cite{EjiriEquivariant} Ejiri studied whether there are further restrictions on harmonic maps from the projective plane. In particular, he proved the following theorem.
\begin{theorem}[Ejiri~\cite{EjiriEquivariant}]
\label{EjiriRP:thm}
Let $\Phi\colon \mathbb{RP}^2\to\mathbb{S}^{2m}$ be a linearly full harmonic map. Then the following holds,
\begin{itemize}
\item $m$ is even;
\item If $\deg(\Phi) = \frac{m(m+1)}{2}$, then up to an isometry of $\mathbb{S}^{2m}$, $\Phi$ coincides with the Veronese map $\Psi_m$.
\end{itemize}
\end{theorem}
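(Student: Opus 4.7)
The plan is to lift $\Phi$ to $\widetilde\Phi=\Phi\circ\pi\colon\mathbb{S}^2\to\mathbb{S}^{2m}$, where $\pi\colon\mathbb{S}^2\to\mathbb{RP}^2$ is the antipodal projection, and to extract the parity of $m$ from the resulting $\mathbb{Z}_2$-equivariance $\widetilde\Phi\circ A=\widetilde\Phi$ with $A(x)=-x$. Because $\pi$ is a local isometry, $\widetilde\Phi$ is again a linearly full harmonic map, and $E(\widetilde\Phi)=2E(\Phi)=4\pi d$, so by Theorem~\ref{Barbosa:thm} the harmonic degree of $\widetilde\Phi$ equals $d$, and in particular $d\geqslant m(m+1)/2$. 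The whole argument consists of converting the $A$-symmetry into a parity constraint on $m$.

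For the second bullet, suppose $d=m(m+1)/2$. Then $\widetilde\Phi$ realises the minimum degree of Theorem~\ref{Barbosa:thm}, and Calabi's rigidity (the uniqueness half of the Barbosa--Calabi classification at minimum degree) furnishes an isometry $T\in O(2m+1)$ with $\widetilde\Phi=T\circ\Phi_m$. The components of $\Phi_m$ are restrictions to $\mathbb{S}^2\subset\mathbb{R}^3$ of harmonic polynomials of degree $m$, hence $\Phi_m\circ A=(-1)^m\Phi_m$. Combined with $\widetilde\Phi\circ A=\widetilde\Phi$ and linearity of $T$, this forces $(-1)^m=1$; so $m$ is even, and $\widetilde\Phi$ descends through $\pi$ to $\Phi=T\circ\Psi_m$, which establishes the second bullet (and simultaneously the first bullet in the minimum-degree case).

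For the first bullet in arbitrary degree, one must exclude any linearly full harmonic $\Phi\colon\mathbb{RP}^2\to\mathbb{S}^{2m}$ with $m$ odd, with no hypothesis on $d$. The plan is to propagate the $A$-equivariance of $\widetilde\Phi$ through the Calabi--Barbosa structure theorem: every linearly full harmonic $\widetilde\Phi\colon\mathbb{S}^2\to\mathbb{S}^{2m}$ is recovered from a full holomorphic directrix $\psi\colon\mathbb{S}^2\to Q^{2m-1}\subset\mathbb{C}P^{2m}$ by applying $m$ successive $\partial$-transforms. Since $A$ is anti-holomorphic on $\mathbb{S}^2\cong\mathbb{C}P^1$, intertwining it with each $\partial$-transform contributes a sign; after $m$ iterations one obtains an overall factor of $(-1)^m$ at the level of $\widetilde\Phi$. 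The requirement that $\widetilde\Phi$ be $A$-invariant (not $A$-anti-invariant) at the end of the chain then forces $m$ to be even, completing part~(1).

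\textbf{Main obstacle.} The second bullet is essentially immediate from Calabi's minimum-degree rigidity together with the elementary parity of harmonic polynomials. The real difficulty lies in the first bullet for arbitrary degree $d$, where one must carefully track the interaction of the anti-holomorphic involution $A$ on the Riemann surface $\mathbb{S}^2$ with the iterated $\partial$-transforms of the Calabi--Barbosa reconstruction. This bookkeeping — keeping track of how anti-holomorphic conjugation shuffles $\partial$- and $\bar\partial$-transforms through the harmonic sequence — rather than any single geometric estimate, is the main technical ingredient.
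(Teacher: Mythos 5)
This statement is a cited result (Ejiri, \emph{Trans.\ Amer.\ Math.\ Soc.}\ 1986); the paper quotes it without proof, so there is no in-paper argument to compare against. Your proposal must therefore stand on its own.

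Your treatment of the second bullet is correct: at minimum degree $d=m(m+1)/2$, the Calabi--Barbosa rigidity gives $\widetilde\Phi = T\circ\Phi_m$, and since the components of $\Phi_m$ are degree-$m$ homogeneous harmonic polynomials, $\Phi_m\circ A=(-1)^m\Phi_m$. Combined with $\widetilde\Phi\circ A=\widetilde\Phi$ and linearity of $T$, this forces $m$ even and $\Phi=T\circ\Psi_m$.

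For the first bullet in arbitrary degree, the strategy (lift, impose $A$-equivariance, feed it through the directrix/twistor description) is the right one, but the sign mechanism you propose is not correct as stated. You claim that ``intertwining $A$ with each $\partial$-transform contributes a sign; after $m$ iterations one obtains an overall factor of $(-1)^m$.'' This does not happen. Since $A$ is anti-holomorphic, intertwining it with the harmonic sequence simply interchanges $\partial$- and $\bar\partial$-transforms; one gets $L_p\circ A = L_{-p} = \overline{L_p}$ as subbundles, with \emph{no} sign appearing step by step --- the intermediate $L_{-p}$, $p\ne 0$, are not real lines and have no preferred real representative that could accumulate signs. The genuine source of the $(-1)^m$ is the twistor projection itself: one shows $\Psi\circ A = \overline{\Psi}$, and then one needs the identity $\pi_m(\overline V)=(-1)^m\pi_m(V)$, which holds because passing from the isotropic space $V$ to $\overline V$ replaces the induced complex structure $J$ on the orthogonal $2m$-plane by $-J$, and $-J$ gives the opposite orientation exactly when $m$ is odd. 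Equivalently, in a Hodge-star formula expressing $\widetilde\Phi$ as $\star\bigl(F\wedge F'\wedge\cdots\wedge F^{(m-1)}\wedge\overline F\wedge\cdots\wedge\overline{F^{(m-1)}}\bigr)$ for a local lift $F$ of the directrix, applying $A$ swaps the two blocks of $m$ vectors, contributing $(-1)^{m^2}=(-1)^m$ in one stroke rather than one sign per iteration. This is a single global orientation computation, not an accumulation across the chain of $\partial$-transforms; your proposal asserts the right conclusion via the wrong mechanism, and the argument is not rigorous until this is repaired.
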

\begin{remark}
In~\cite{G} it is shown that any harmonic map $\Phi\colon \mathbb{RP}^2\to\mathbb{S}^{4}$ has odd degree. It would be interesting to see if a similar statement holds for spheres of higher dimension.
\end{remark}

\subsection{Spectral index} To each harmonic map $\Phi\colon (M,g)\to\mathbb{S}^n$ one can associate the following Schr\"odinger operator, see e.g.~\cite{Ejiri},
$$
\mathcal L_{g,\Phi} (u) = \Delta_g u - |\nabla\Phi|_g u
$$
acting on functions on $M$. This operator is conformally covariant in the sense that for a conformal metric $\widetilde g = e^{2\omega}g$ one has $\mathcal L_{\widetilde g,\Phi} = e^{-2\omega}\mathcal L_{g,\Phi}$. The associated quadratic form is given by
$$
Q_{\Phi,S}(u) = \int_M|\nabla u|_g^2 - |\nabla\Phi|_g^2u^2\,dv_g
$$
and is conformally invariant. As a result, the number of negative eigenvalues of $\mathcal L_{g,\Phi}$ as well as the kernel $\ker \mathcal L_{g,\Phi}$ do not depend on the choice of a metric in a conformal class. 

\begin{definition}
\label{indS:def}
{\em Spectral index} $\ind_S(\Phi)$ is a number of negative eigenvalues of $\mathcal L_{\widetilde g,\Phi}$ for some (any) metric $\widetilde g$ conformal to $g$. Similarly, {\em spectral nullity} $\nul_S(\Phi)$ is the dimension $\ker \mathcal L_{\widetilde g,\Phi}$ for some (any) metric $\widetilde g$ conformal to $g$.
\end{definition}
\begin{remark}
Equation~\eqref{harmonic:equation} implies that the components of $\Phi$ lie in $\ker \mathcal L_{g,\Phi}$, so that for a linearly full harmonic map $\Phi\colon (M,g)\to \mathbb{S}^n$ one has $\nul_S(\Phi)\geqslant n+1$.
\end{remark}
\begin{remark}
In~\cite{MR,Nayatani,Ejiri} these numbers are simply referred to as index and nullity. We decided to add the epithet "spectral" to distinguish it from the energy index defined below. 
\end{remark}
\begin{remark}
The operator $\mathcal L_{g,\Phi}$ appears naturally in various geometric contexts, e.g. in the study of complete minimal surfaces in $\mathbb{R}^3$, see e.g.~\cite{MR,Nayatani}.
\end{remark}
Let us now make a particular choice of metric. Set $g_\Phi = \frac{1}{2}|\nabla\Phi|^2_gg$, then $\mathcal L_{g_{\Phi}, \Phi} = \Delta_{g_\Phi} - 2$. Then the components of $\Phi$ are eigenfunctions of $\Delta_{g_\Phi}$ with the eigenvalue $2$. As a result, $\ind_S(\Phi)$ is the number of eigenvalues of $\Delta_{g_{\Phi}}$ smaller than $2$ and $\nul_S(\Phi)$ is the multiplicity of the eigenvalue $2$. We remark that, in general, $g_\Phi$ is a metric with isolated conical singularities at branch points of $\Phi$. However, it does not affect the discussion above as long as one works with the Friedrichs extension of the Laplacian, see e.g.~\cite[Section 2.3]{KarpukhinYY}. Finally, if $\Phi$ is weakly conformal then $g_\Phi = \Phi^*g_{\mathbb{S}^n}$.

\begin{example} 
\label{Veronese2}
{\em Veronese surfaces.} Using the definition of Veronese surfaces $\Phi_m$ and $\Psi_m$ in Example~\ref{Veronese1} it is easy to compute their spectral index. Indeed, 
$$
\ind_S(\Phi_m) = \sum_{i=0}^{m-1} (2i+1) = m^2.
$$
If $m$ is even, then
$$
\ind_S(\Psi_m) = \sum_{i=0}^{\frac{m}{2}-1}(4i+1) = \frac{m(m-1)}{2}.
$$
\end{example}

\subsection{Extremal metrics for $\bar\lambda_k$}
In the present section we review the theory of extremal metrics for functionals $\bar\lambda_k(M,g)$ briefly mentioned in the introduction. We set
$$
\Lambda_k(M,[g]) = \sup_{\tilde g\in [g]}\bar\lambda_k(M,\tilde g),
$$
where $[g]$ is a class of metrics conformal to $g$. Similar to the previous section it is convenient to allow $g$ to have conical singularities at isolated points of $M$. Thus, $[g] = \{\tilde g|\,\tilde g = f^2g\}$, where $f$ ranges over smooth functions with isolated zeroes.

The functional $\bar\lambda_i(M,g)$ depends continuously  on the metric $g$, but this functional is not differentiable. 
However, Berger proved in the paper~\cite{Berger} that for an analytic family of metrics $g_t$ there exist the left and the right 
derivatives with respect to $t$. This motivates the following definition, see the papers~\cite{ESI,NadirashviliTorus}.

\begin{definition} A Riemannian metric $g$ on a closed surface $M$ is called {\em extremal for the functional} 
$\bar\lambda_i$ if for any analytic deformation $g_t$ such that $g_0 = g$ the following inequality holds,
$$
\frac{d}{dt}\bar\lambda_i(M,g_t)\Bigl|_{t=0+} \times \frac{d}{dt}\bar\lambda_i(M,g_t)\Bigl|_{t=0-}\leqslant 0.
$$

Similarly, $g$ is called {\em conformally extremal} if the same inequality holds for conformal deformations, i.e. for deformations satisfying $[g_t]=[g]$ for all $t$.
\end{definition}


\begin{theorem}[Nadirashvili~\cite{NadirashviliTorus}, El Soufi and Ilias,~\cite{ESI}, see also~\cite{KNPP}] 
\label{extremal:thm}
If $g$ is conformally extremal for the functional $\bar\lambda_i(M,g)$ then there exists a harmonic map $\Psi\colon (M,g)\to\mathbb{S}^n$ whose components are $\lambda_i$-eigenfunctions. In particular, $\ind_S(\Phi)\leqslant i$. If $g$ is extremal, then $\Psi$ can be chosen to be weakly conformal.

Conversely, if $\Phi\colon(M,g)\to\mathbb{S}^n$ is a harmonic map to the unit sphere, then the metric $g_\Phi$ is conformally extremal for the functional $\bar\lambda_{\ind_S(\Phi)}$. Furthermore, if $\Phi$ is weakly conformal, i.e. if $\Phi\colon (M,g_{\Phi})\to\mathbb{S}^n$ is a branched minimal immersion, then $g_\Phi$ is extremal for the functional $\bar\lambda_{\ind_S(\Phi)}$.
\end{theorem}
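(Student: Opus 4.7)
My plan is to prove the two directions separately, using Berger's formula for the one-sided derivatives of $\bar\lambda_i(M,g_t)$ along an analytic family of metrics, together with a convex-duality extraction in the forward direction.

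For the forward implication, I fix a conformally extremal $g$ and consider an analytic conformal deformation $g_t = e^{2t\omega}g$. Berger's formula identifies the one-sided derivatives of $\bar\lambda_i$ at $t=0$ with the extreme eigenvalues of the quadratic form $u \mapsto \int_M \bigl(\lambda_i u^2 - |\nabla u|_g^2\bigr)\omega\, dv_g$ restricted to the $\lambda_i$-eigenspace (after normalizing area). Conformal extremality forces these two extremes to have opposite signs for every admissible $\omega$, which by a Hahn--Banach / min-max argument (as in Nadirashvili's original proof) produces a probability measure on the unit sphere of the eigenspace whose associated linear functional in $\omega$ vanishes identically. Since the eigenspace is finite-dimensional, Carath\'eodory's theorem reduces this measure to a finite convex combination, yielding eigenfunctions $v_1,\dots,v_{n+1}$ and positive weights $c_\alpha$ with $\sum_\alpha c_\alpha v_\alpha^2$ pointwise constant. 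After rescaling, the tuple $\Psi=(\sqrt{c_\alpha}\,v_\alpha)$ maps $M$ into $\mathbb{S}^n$; combining the eigenequation with the normalization $|\Psi|^2\equiv 1$ recovers the harmonic map equation~\eqref{harmonic:equation} in a conformal rescaling of $g$, and conformal covariance of $\mathcal{L}_{g,\Psi}$ gives $\ind_S(\Psi)\leqslant i$.

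For the converse, given a harmonic map $\Phi\colon (M,g)\to\mathbb{S}^n$, I pass to the metric $g_\Phi=\frac{1}{2}|\nabla\Phi|_g^2\,g$; the components $\Phi^\alpha$ are then $\Delta_{g_\Phi}$-eigenfunctions with eigenvalue $2=\lambda_{\ind_S(\Phi)}$ by~\eqref{harmonic:equation}. Plugging these into Berger's formula along any conformal deformation of $g_\Phi$ and using $\sum_\alpha (\Phi^\alpha)^2\equiv 1$ forces the two one-sided derivatives of $\bar\lambda_{\ind_S(\Phi)}$ to have opposite signs, which is exactly conformal extremality. For the two weakly conformal refinements, I would test against traceless (non-conformal) deformations: additional extremality for such deformations in the forward direction forces the symmetric tensor $\Psi^* g_{\mathbb{S}^n}-\frac{1}{2}|d\Psi|_g^2\,g$ to vanish, i.e.\ $\Psi$ is weakly conformal; conversely, a branched minimal immersion $\Phi$ is a critical point of the area functional, so the traceless contribution in the first variation of $\bar\lambda_{\ind_S(\Phi)}$ vanishes automatically and conformal extremality of $g_\Phi$ upgrades to full extremality.

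The main obstacle I foresee is the convex-duality step in the forward direction: one must upgrade a pointwise extremality inequality (valid for every perturbation $\omega$) to the existence of a finite, positively weighted collection of eigenfunctions whose squares sum to a constant, with count controlled so that the resulting map takes values in $\mathbb{S}^n$. This is subtle precisely because $\bar\lambda_i$ fails to be differentiable when $\lambda_i$ has multiplicity greater than one, so the one-sided derivatives must be handled as the extreme eigenvalues of an auxiliary finite-dimensional quadratic form, and the duality argument must correctly cancel the Lagrange multiplier coming from the area normalization.
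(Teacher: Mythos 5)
The paper does not prove Theorem~\ref{extremal:thm}; it is quoted with attributions to Nadirashvili, El~Soufi--Ilias, and~\cite{KNPP}, so there is no ``paper's proof'' to compare against. Your sketch does reproduce the standard argument from those references (Berger's one-sided derivative formula, the duality/Hahn--Banach extraction of a finite family of eigenfunctions whose squares sum to a constant, the weak conformality upgrade via traceless deformations), and the overall logic is sound: it is precisely the Nadirashvili--El~Soufi--Ilias--Fraser--Schoen machinery.

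Two small clean-ups. First, once you have $|\Psi|^2\equiv 1$ and $\Delta_g\Psi^\alpha=\lambda_i\Psi^\alpha$, the identity $0=\Delta_g(|\Psi|^2)=2\langle\Delta_g\Psi,\Psi\rangle-2|\nabla\Psi|_g^2$ forces $|\nabla\Psi|_g^2=\lambda_i$ pointwise, so $\Psi$ already satisfies~\eqref{harmonic:equation} on $(M,g)$; no conformal rescaling of $g$ is needed. The bound $\ind_S(\Psi)\leqslant i$ then follows directly from $\mathcal L_{g,\Psi}=\Delta_g-\lambda_i$, whose negative eigenvalues are exactly the $\lambda_j<\lambda_i$, of which there are at most $i$; the ``conformal covariance'' remark is not what is doing the work there. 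Second, the auxiliary quadratic form you state for Berger's derivative, $u\mapsto\int_M(\lambda_i u^2-|\nabla u|_g^2)\omega\,dv_g$, is not the one the conformal-invariance-of-energy computation yields; after normalizing $\area_g(M)=1$ and $\int u^2\,dv_g=1$ the correct form (up to a positive constant) is $u\mapsto\int_M(1-u^2)\omega\,dv_g$. This does not affect the subsequent duality step --- the only thing that matters is that a convex combination of the values over $\{u_\alpha\}$ with $\sum c_\alpha u_\alpha^2\equiv\mathrm{const}$ vanishes --- but the stated form should be corrected before the argument is written out in detail.
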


In particular, if there exists a metric that realizes the quantities $\Lambda_k(M)$ or $\Lambda_k(M,[g])$, then this metric is extremal or conformally extremal respectively. Such metrics are called {\em maximal for} $\bar\lambda_k$ or {\em conformally maximal for} $\bar\lambda_k$ respectively. The existence of such metrics was studied in~\cite{NadirashviliTorus, Petrides, Petridesk}. For the sake of brevity we only state the result for $M=\mathbb{RP}^2$.
\begin{theorem}[Petrides~\cite{Petridesk}, see also~\cite{KNPP}]
\label{Petrides:thm}
Assume that 
\begin{equation}
\label{Petrides:ineq}
\Lambda_k(\mathbb{RP}^2)>\Lambda_{k-1}(\mathbb{RP}^2) + 8\pi,
\end{equation}
then there exists a metric $g$ on $\mathbb{RP}^2$, smooth outside of possible isolated conical singularities, such that $\bar\lambda_k(\mathbb{RP}^2,g) = \Lambda_k(\mathbb{RP}^2)$. In particular, there exists a harmonic map $\Phi\colon \mathbb{RP}^2\to\mathbb{S}^n$ of spectral index at most $k$, such that $g=g_\Phi$.

\end{theorem}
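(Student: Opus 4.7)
The plan is to apply the concentration–compactness strategy pioneered by Nadirashvili and refined by Petrides. Since $\mathbb{RP}^2$ carries a unique conformal class, fix the round metric $g_0$ so that every competing metric is of the form $g = e^{2\omega} g_0$, possibly with isolated singularities; homothety allows us to normalize to unit total area. Pick a maximizing sequence $g_n$ with $\bar\lambda_k(\mathbb{RP}^2, g_n) \to \Lambda_k(\mathbb{RP}^2)$ and $\mathrm{Area}_{g_n}(\mathbb{RP}^2) = 1$. Passing to a subsequence, the volume measures $dv_{g_n}$ converge weakly-$*$ to a probability measure on $\mathbb{RP}^2$ that decomposes as
\begin{equation*}
\mu \;=\; \rho \, dv_{g_0} + \sum_{i=1}^{r} m_i \, \delta_{p_i},
\end{equation*}
with $\rho \geq 0$ in $L^1(g_0)$, distinct concentration points $p_i \in \mathbb{RP}^2$, and masses $m_i > 0$. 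The goal is to rule out $r \geq 1$.

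Assume for contradiction $r \geq 1$. Near each $p_i$ perform a conformal blow-up: compose $g_n$ with a sequence of Möbius-type dilations centered at $p_i$ so that, in the limit, the mass concentrating at $p_i$ is redistributed as a bubble metric $h_i$ on $\mathbb{S}^2$ of total area $m_i$. The key quantitative output, obtained through a min–max argument with cutoff functions adapted to the residual piece and to each bubble scale, is the existence of nonnegative integers $k_0, k_1, \ldots, k_r$ with $k_0 + \sum_{i=1}^{r} k_i = k$ and $\sum_{i=1}^{r} k_i \geq 1$ such that
\begin{equation*}
\Lambda_k(\mathbb{RP}^2) \;\leq\; \Lambda_{k_0}(\mathbb{RP}^2) + \sum_{i=1}^{r} \Lambda_{k_i}(\mathbb{S}^2).
\end{equation*}
Applying Theorem~\ref{KNPP:thm} to compute $\Lambda_{k_i}(\mathbb{S}^2) = 8\pi k_i$ and invoking iteratively the standard lower bound $\Lambda_{j}(\mathbb{RP}^2) \geq \Lambda_{j-1}(\mathbb{RP}^2) + 8\pi$ (obtained by gluing a small round sphere to a $\bar\lambda_{j-1}$-maximizer), the right-hand side is at most $\Lambda_{k-1}(\mathbb{RP}^2) + 8\pi$, contradicting the hypothesis~\eqref{Petrides:ineq}. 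Therefore $r = 0$, and the absolutely continuous part produces a metric $g = \rho \, g_0$ realizing $\Lambda_k(\mathbb{RP}^2)$.

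The main obstacle is the eigenvalue-splitting inequality above, in particular securing the precise integer decomposition $k_0 + \sum k_i = k$ with $\sum_{i=1}^{r} k_i \geq 1$ whenever a bubble forms. This requires the min–max characterization of $\lambda_k$ applied to appropriately orthogonalized trial functions, a Hersch-type center-of-mass argument to control each bubble's contribution by $8\pi k_i$, and careful matching of the distinct scales on the residual piece and the rescaled bubbles. A subsequent elliptic-regularity and upper-semicontinuity argument then shows that $\rho$ is smooth away from an isolated set of zeros, explaining the conical singularities permitted in the statement. Once the maximal metric $g$ is in hand, the final ``In particular'' clause follows at once: as $g$ is (conformally) extremal for $\bar\lambda_k$, Theorem~\ref{extremal:thm} produces a harmonic map $\Phi\colon \mathbb{RP}^2 \to \mathbb{S}^n$ whose components are $\lambda_k(g)$-eigenfunctions; the choice $g_\Phi = \tfrac{1}{2}|\nabla\Phi|^2_g\, g$ then coincides with $g$ up to a constant rescaling, the components become eigenfunctions of $\Delta_{g_\Phi}$ with eigenvalue $2$, and hence $\ind_S(\Phi) \leq k$.
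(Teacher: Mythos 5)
The paper does not prove Theorem~\ref{Petrides:thm} at all; it is cited verbatim from Petrides~\cite{Petridesk} (with a general-surface version) and from~\cite{KNPP}, so there is no internal proof to compare against. That said, your sketch correctly reproduces the high-level concentration--compactness strategy of those references: normalize a maximizing sequence, pass to the weak-$*$ limit of volume measures, perform conformal blow-up at atoms to extract bubbles on $\mathbb{S}^2$, derive the eigenvalue-splitting inequality
\[
\Lambda_k(\mathbb{RP}^2) \le \Lambda_{k_0}(\mathbb{RP}^2) + \sum_{i\ge 1}\Lambda_{k_i}(\mathbb{S}^2)
\]
whenever at least one bubble absorbs spectrum, and use the gap hypothesis~\eqref{Petrides:ineq} together with the gluing bound $\Lambda_j \ge \Lambda_{j-1}+8\pi$ and $\Lambda_{k_i}(\mathbb{S}^2)=8\pi k_i$ to reach a contradiction. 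Your reduction of the contradiction to arithmetic on the $k_i$ is correct, and the final passage from the maximal metric to the harmonic map via Theorem~\ref{extremal:thm} (with $g_\Phi$ agreeing with $g$ up to a homothety because $|\nabla\Phi|_g^2=\lambda_k$) is exactly the standard argument.

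However, be aware that the real mathematical content is concentrated in the two steps you flag but do not carry out. First, the eigenvalue-splitting inequality is far from automatic: one must build trial subspaces out of cutoffs adapted simultaneously to the residual measure and to each rescaled bubble, control cross terms, and in particular show that at least one $k_i$ with $i\ge1$ is positive when $r\ge1$; Petrides needs a delicate capacity and quasi-conformal-vector-field argument here, not just Hersch's center-of-mass trick applied to each piece. Second, the conclusion that the absolutely continuous density $\rho$ is smooth away from a finite set of conical points is itself a substantial regularity theorem (roughly: the limiting metric arises from a set of eigenfunctions satisfying a Schr\"odinger equation, and one bootstraps via elliptic estimates); your one-line mention does not capture this. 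As a blind reconstruction the outline is faithful, but the statement you are proving is precisely the union of those two technical steps, so a reader should understand that those are the theorem, not details.
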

\begin{remark}
It follows from the results of~\cite{CE} that if inequality~\eqref{Petrides:ineq} fails, then 
$\Lambda_k(\mathbb{RP}^2)=\Lambda_{k-1}(\mathbb{RP}^2) + 8\pi$.
\end{remark}
\begin{remark}
We refer the reader to~\cite{Petridesk} for the case of an arbitrary surface $M$.
\end{remark}

%

\subsection{Relation between spectral index and degree.}

The majority of the present paper is devoted to the proof of the following theorem.
\begin{theorem}
\label{MainTheorem:ind}
Let $\Phi\colon \mathbb{RP}^2\to\mathbb{S}^{n}$ be a harmonic map of degree $d$. Then one has
$$
\ind_S(\Phi)\geqslant\dfrac{d-1}{2}.
$$
Moreover, the equality is achieved iff $d=3$.
\end{theorem}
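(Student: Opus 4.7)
The plan is to combine three ingredients established earlier in the paper: the energy-index lower bound from Theorem~\ref{MainTheorem:indintro} applied to the antipodal lift, the index-doubling identity of Proposition~\ref{RP2:thmintro}, and the passage from energy index to spectral index supplied by Proposition~\ref{MainTheorem:ESintro} (supplemented, where needed, by the $\mathbb{RP}^2$-refinement promised in Theorem~\ref{indnul:thm}). First I would reduce to the linearly full case: if $\Phi(\mathbb{RP}^2)$ lies in a proper subsphere $\mathbb{S}^k\subsetneq\mathbb{S}^n$, neither the spectral index nor the degree change upon restricting the codomain. Once $\Phi$ is linearly full, Theorem~\ref{BarbosaRP:thm} gives $n=2m$, Theorem~\ref{EjiriRP:thm} forces $m$ to be even, and $d\geqslant m(m+1)/2$. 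The antipodal lift $\widetilde\Phi=\Phi\circ\pi$ is again linearly full, and from $E(\widetilde\Phi)=2E(\Phi)=4\pi d$ together with Theorem~\ref{Barbosa:thm} it is a harmonic map $\mathbb{S}^2\to\mathbb{S}^{2m}$ of the same degree $d$.

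Having set this up, I would chain the three estimates. Theorem~\ref{MainTheorem:indintro} applied to $\widetilde\Phi$ yields
$$
\ind_E(\widetilde\Phi)\geqslant 2(m-1)\bigl(2d-s+2\bigr),\qquad s:=[\sqrt{8d+1}\,]_{\mathrm{odd}}.
$$
Proposition~\ref{RP2:thmintro} divides this by two to give $\ind_E(\Phi)\geqslant (m-1)(2d-s+2)$, and Proposition~\ref{MainTheorem:ESintro} finally produces
$$
\ind_S(\Phi)\geqslant \frac{\ind_E(\Phi)}{2m+1}\geqslant \frac{(m-1)(2d-s+2)}{2m+1}.
$$

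The main obstacle is to convert this into the sharp bound $\ind_S(\Phi)\geqslant (d-1)/2$. The central arithmetic input is that the Calabi--Barbosa bound $d\geqslant m(m+1)/2$ is equivalent to $2m+1\leqslant s$, since $(2m+1)^2=4m(m+1)+1\leqslant 8d+1$ and $2m+1$ is odd. This relation, combined with the integrality of $\ind_S(\Phi)$, handles most $(m,d)$; residual cases (small even $m$ with $d$ just above the Veronese threshold) would use parity constraints on $d$ (e.g.\ the result of~\cite{G} that harmonic $\mathbb{RP}^2\to\mathbb{S}^4$ have odd degree) together with the strengthened version of Proposition~\ref{MainTheorem:ESintro} provided by Theorem~\ref{indnul:thm}, which sharpens the estimate by accounting for the spectral nullity of $\Phi$.

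For the equality clause I would trace the chain backwards. Equality at $(d-1)/2$ forces equality in Theorem~\ref{MainTheorem:indintro}, which together with $2m+1\leqslant s$ is only compatible with $d=m(m+1)/2$; the rigidity clause of Theorem~\ref{EjiriRP:thm} then identifies $\Phi$ with the Veronese map $\Psi_m$ up to isometry of $\mathbb{S}^{2m}$. Since $\ind_S(\Psi_m)=m(m-1)/2$ (Example~\ref{Veronese2}) and $(d-1)/2=(m-1)(m+2)/4$, the two coincide precisely at $m=2$, that is at $d=3$, as asserted.
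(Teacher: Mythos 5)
Your overall strategy (reduce to the linearly full case, lift to $\mathbb{S}^2$, then chain the index estimates) is in the right spirit and is the same architecture as the paper's. However, there is a genuine gap at the arithmetic heart of the argument. Your ``main chain'' uses the weak inequality $\ind_S(\Phi)\geqslant\ind_E(\Phi)/(2m+1)$ from Proposition~\ref{MainTheorem:ESintro} and produces
$$
\ind_S(\Phi)\;\geqslant\;\frac{(m-1)(2d-s+2)}{2m+1},\qquad s=[\sqrt{8d+1}\,]_{\mathrm{odd}}.
$$
Asymptotically (for fixed $m$, large $d$) this right-hand side behaves like $\frac{2(m-1)}{2m+1}\,d$, while the target is $\frac{1}{2}d$. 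These agree in order only when $\frac{2(m-1)}{2m+1}\geqslant\frac12$, i.e.\ $m\geqslant 3$. For $m=2$ — which, by Ejiri's evenness constraint and by your own equality analysis, is precisely the case where the Veronese map lives and where equality must be examined — the coefficient is $\frac{2}{5}<\frac{1}{2}$, and one checks directly that for $m=2$ your bound already falls short at $d=5$ ($L=7/5<2$), at $d=7$ ($L=9/5<3$), and in fact for every $d\geqslant 4$. So what you characterize as ``residual cases (small even $m$ with $d$ just above the Veronese threshold)'' is, for $m=2$, the entire range of degrees above $3$. The weak bound is not merely losing a few sporadic pairs; it is structurally insufficient exactly in the case that matters most.

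The fix is exactly the ingredient you relegate to a footnote: Theorem~\ref{indnul:thm}, which replaces the denominator $2m+1$ estimate with
$$
\ind_S(\Phi)\;\geqslant\;\frac{\ind_E(\Phi)+\nul_E(\Phi)-m(2m+1)}{2m+1},
$$
and this refinement must be used from the outset, not as a fallback. Note also two points of imprecision in how you invoke it. First, the extra term is the \emph{energy} nullity $\nul_E(\Phi)$, not the spectral nullity; second, it is useless without a lower bound for $\nul_E$, which the paper supplies via the moduli-space dimension count $\nul_E(\widetilde\Phi)\geqslant 4d+2m^2$ (Theorem~\ref{nulE:thm}), halved through Theorem~\ref{RP2index:thm}. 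This is what injects the crucial extra term $2d-m(m+1)$ into the numerator; for $m=2$ it lifts the asymptotic coefficient from $\frac{2}{5}$ to $\frac{4}{5}$, which is enough. With this stronger bound, combined with $\nul_S(\widetilde\Phi)\geqslant 2m+1$, Kotani's $\nul_S\leqslant s$, and Ejiri's constraints, the argument narrows to the finite list $(m,d)\in\{(2,3),(2,4),(4,10)\}$. These must then be disposed of by direct means (Veronese computation for $(2,3)$ and $(4,10)$; the Li--Yau uniqueness of the $\ind_S=1$ map for $(2,4)$), rather than by the ``trace equality backwards'' heuristic you sketch — which does not literally go through, since the chain of inequalities is not tight at $d=3$ (your $L=3/5$ there, and only integrality delivers $\ind_S\geqslant 1$), so equality in the final statement does not force equality in Theorem~\ref{MainTheorem:indintro}.
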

In this section we show that Theorem~\ref{MainTheorem:ind} implies Theorem~\ref{MainTheorem}.
\begin{proof}[Proof of Theorem~\ref{MainTheorem}]
The proof is by induction on $k$. For $k=1$ Theorem~\ref{MainTheorem} holds by the results of Li and Yau~\cite{LiYau}.

By the step of induction, $\Lambda_k(\mathbb{RP}^2)=4\pi(2k+1)$. Assume that $\Lambda_{k+1}(\mathbb{RP}^2)>4\pi(2k+3)=\Lambda_{k}(\mathbb{RP}^2)+8\pi$, then by Theorem~\ref{Petrides:thm} the value  $\Lambda_{k+1}(\mathbb{RP}^2)$ is attained for some metric $g$, smooth outside finitely many conical singularities. According to Theorem~\ref{extremal:thm} there exists a harmonic map $\Phi\colon \mathbb{RP}^2\to\mathbb{S}^n$ such that $\ind_S(\Phi)\leqslant k+1$, $g=g_\Phi$ and $\bar\lambda_{k+1}(\mathbb{RP}^2,g) = \Lambda_{k+1}(\mathbb{RP}^2)$. Let $d$ be the degree of $\Phi$, then one has $4\pi(2k+3)<\bar\lambda_{k+1}(\mathbb{RP}^2,g) =2\area_{g_\Phi}(\mathbb{RP}^2) = 2E_g(\Phi)= 4\pi d$. Thus, one has $d>2k+3$. At the same time, $\ind_S(\Phi) \leqslant k+1$ and by Theorem~\ref{MainTheorem:ind} one has
$$
k+1\geqslant \ind_S(\Phi)\geqslant\frac{d-1}{2}>\frac{2k+2}{2} = k+1.
$$  
We arrive at a contradiction.

Assume now that for some $k>1$ and some metric $g$ one has the equality $\Lambda_k(\mathbb{RP}^2)=\bar\lambda_k(\mathbb{RP}^2,g)=4\pi(2k+1)$. Then $g$ is maximal and by Theorem~\ref{extremal:thm} there exists a harmonic map $\Phi$, such that $\deg(\Phi)=2k+1$ and $\ind_S(\Phi)\leqslant k$. Since $k>1$, one has $d:=\deg(\Phi)>3$ and the inequality in Theorem~\ref{MainTheorem:ind} is strict. Thus, one obtains
$$
k\geqslant \ind_S(\Phi)>\frac{d-1}{2}=\frac{2k}{2} = k,
$$  
where once again we arrive at a contradiction.
\end{proof}

\begin{example}
\label{Veronese3}
By Examples~\ref{Veronese1} and~\ref{Veronese2}, for Veronese immersions $\Psi_m\colon\mathbb{RP}^2\to\mathbb{S}^{2m}$ one has $\deg(\Psi_m) = \frac{m(m+1)}{2}$ and $\ind_S(\Psi_m) = \frac{m(m-1)}{2}$. Then
$$
2\ind_S(\Psi_m) +1  = m(m-1) +1 \geqslant \frac{m(m+1)}{2},
$$
 where the last inequality holds because by Theorem~\ref{EjiriRP:thm} one has $m\geqslant 2$. Moreover, the equality is achieved only for $m=2$ and in that case $\deg(\Psi_m)=3$. As a result, we see that Theorem~\ref{MainTheorem:ind} holds for Veronese immersions.
\end{example}



\section{Energy index}
\label{Energy:sec}

\subsection{Energy index} If $\Phi\colon (M,g)\to (N,h)$ is a harmonic map between Riemannian manifolds, the {\em Jacobi} (or {\em index}) operator is defined on sections of $\Phi^*TN$ by the following formula, see e.g.~\cite{EL},
$$
J_{g,\Phi}(V) = \Delta^\Phi_g V - \mathrm{tr}_g R^N(V,d\Phi) d\Phi,
$$
where $\Delta^\Phi_g$ is a positive Laplacian associated to the induced connection on $\Phi^*TN$ and $R^N$ is the Riemann curvature tensor of $N$, $R^N(X,Y)Z = \nabla_X\nabla_YZ-\nabla_Y\nabla_XZ-\nabla_{[X,Y]}Z$. In the following we assume $M$ is a surface and $N=\mathbb{S}^n$ is a Euclidean sphere of unit radius. In this case the Jacobi operator $J_g$ has the following coordinate representation. The bundle $\Phi^*T\mathbb{S}^n$ can be identified with a subbundle $\underline{\Phi}^\perp$ of the trivial bundle $M\times \mathbb{R}^{n+1}$ consisting of vectors orthogonal to $\Phi$. Then one has
$$
J_{g,\Phi}(V) = \pi_{\underline{\Phi}^\perp}\left(\Delta_g V - |\nabla \Phi|^2_g V\right),
$$ 
where $\Delta_g$ is a Laplacian on $M$ applied component-wise and $\pi_{\underline{\Phi}^\perp}$ is the orthogonal projection onto $\underline{\Phi}^\perp$. Note that $J_{g,\Phi}$ is conformally covariant in the same 
way as $\mathcal L_{g,\Phi}$. In the following we sometimes omit the subscript $\Phi$ if the map can be inferred from context.

The associated quadratic form $Q_{\Phi,E}$ is given by 
$$
Q_{\Phi,E}(V) = \sum_{i=1}^{n+1}\int_M\left (|\nabla V^i|^2_g-|\nabla\Phi|_g^2(V^i)^2\right)\,dv_g
$$
and is conformally invariant, which is why omit the index $g$ in the notation. Note that the domain $\dom Q_{\Phi,E}$ of $Q_{\Phi,E}$ consists of $\mathbb{R}^{n+1}$-valued functions $V$ on $M$ satisfying $V\cdot\Phi=0$, i.e. for each point $x\in M$ one has $V(x)\perp\Phi(x)$.

Similarly to the spectral index, we define the {\em energy index}.
\begin{definition}
\label{indE:def}
{\em Energy index} $\ind_E(\Phi)$ is a number of negative eigenvalues of $J_{\widetilde g,\Phi}$ for some (any) metric ${\widetilde g}$ conformal to $g$. Similarly, {\em energy nullity} $\nul_E(\Phi)$ is the dimension $\ker J_{\widetilde g,\Phi}$ for some (any) metric ${\widetilde g}$ conformal to $g$. The fields in $\ker J_{\widetilde g,\Phi}$ are referred to as {\em Jacobi fields} along $\Phi$.
\end{definition}
\begin{remark}
\label{Jacobi:remark}
One of the ways to produce Jacobi fields along $\Phi$ is to consider harmonic deformations of $\Phi$. Assume that $\Phi_t$ is a family of harmonic maps, such that $\Phi = \Phi_0$. Then $X=\frac{d}{dt}|_{t=0}\Phi_t$ is a Jacobi field, see e.g.~\cite{LW}. Jacobi fields obtained in this way are called~{\em integrable}. The question of determining whether all Jacobi fields are integrable is quite subtle and is related to the smoothness of the corresponding moduli space, see~\cite{LWcp, LW} for results in this direction.
\end{remark}

\subsection{Bounds between indices}
\label{indSE:sec}
The main idea of the present paper is the interplay between $\ind_S(\Phi)$ and $\ind_E(\Phi)$ for a harmonic map $\Phi$. Indeed, looking at the definition of the corresponding quadratic forms, one sees that $Q_{\Phi,E}(V)=\sum Q_{\Phi,S}(V^i)$. However, the domain $\dom Q_{\Phi,E}$ complicates the considerations.

\begin{theorem}
Let $\Phi\colon (M,g)\to\mathbb{S}^n$ be a linearly full harmonic map. Then one has
\begin{equation}
\ind_S(\Phi)\geqslant \frac{\ind_E(\Phi)}{n+1}.
\end{equation}
\end{theorem}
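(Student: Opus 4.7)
The plan is to invoke the min-max characterization of $\ind_E(\Phi)$: since $J_{g,\Phi}$ is a semi-bounded self-adjoint operator with discrete spectrum, the number of its negative eigenvalues equals the minimum codimension, inside the form domain $\dom Q_{\Phi,E}$, of a closed subspace on which $Q_{\Phi,E}\geqslant 0$. It therefore suffices to exhibit one such subspace of codimension at most $(n+1)\ind_S(\Phi)$.

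The construction rests on the elementary identity
$$
Q_{\Phi,E}(V)=\sum_{i=1}^{n+1}Q_{\Phi,S}(V^{i}),
$$
which follows immediately from comparing the two definitions. Let $W\subset L^{2}(M)$ denote the negative eigenspace of the scalar Schr\"odinger operator $\mathcal L_{g,\Phi}$, so that $\dim W=\ind_S(\Phi)$ and $Q_{\Phi,S}(u)\geqslant 0$ for every $u\in L^2(M)$ that is $L^{2}$-orthogonal to $W$. Define
$$
U:=\{V\in\dom Q_{\Phi,E}\,:\,\langle V^{i},w\rangle_{L^{2}(M)}=0\text{ for all }i=1,\ldots,n+1\text{ and all }w\in W\}.
$$
Fixing a basis of $W$, membership in $U$ is cut out by $(n+1)\ind_S(\Phi)$ linear functionals on $\dom Q_{\Phi,E}$, so $\mathrm{codim}_{\dom Q_{\Phi,E}}(U)\leqslant (n+1)\ind_S(\Phi)$. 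On $U$ the identity above writes $Q_{\Phi,E}$ as a sum of non-negative terms, hence $Q_{\Phi,E}|_{U}\geqslant 0$. The min-max bound then yields $\ind_E(\Phi)\leqslant(n+1)\ind_S(\Phi)$, which rearranges to the claimed inequality.

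I do not anticipate a serious obstacle; the proof is of the sort advertised as ``fairly straightforward'' in the discussion preceding the statement. The only minor subtlety is that the pointwise constraint $V\cdot\Phi=0$ is already built into $\dom Q_{\Phi,E}$, so the $(n+1)\ind_S(\Phi)$ linear conditions carving out $U$ may be partially redundant modulo this constraint; but since we only need an upper bound on the codimension of $U$, any such redundancy only helps. The linear-fullness hypothesis plays no active role in this inequality and merely reflects the standing setting of the paper, while the conformal invariance of $Q_{\Phi,E}$ and $Q_{\Phi,S}$ ensures that the argument is insensitive to the chosen representative of the conformal class $[g]$.
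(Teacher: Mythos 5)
Your proof is correct and is essentially the same as the paper's: both rest on the identity $Q_{\Phi,E}(V)=\sum_i Q_{\Phi,S}(V^i)$ and on constraining each component $V^i$ to be $L^2$-orthogonal to the negative eigenspace of $\mathcal{L}_{g,\Phi}$, which imposes at most $(n+1)\ind_S(\Phi)$ linear conditions. The paper phrases it as a proof by contradiction (assuming $\ind_E(\Phi)>(n+1)\ind_S(\Phi)$, picking a negative direction of $Q_{\Phi,E}$ satisfying the orthogonality conditions, and deriving $0>0$), while you give the direct min-max formulation, but the underlying argument is identical.
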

\begin{proof}
Let $U^-$ be the negative space of $Q_S$, i.e. $\dim U^- = \ind_S(\Phi)$. Assume the contrary, i.e. $(n+1)\ind_S(\Phi)<\ind_E(\Phi)$. Then there exists a vector-function $X\in\dom Q_E$, such that $Q_{E}(X)<0$ and
$$
\int_M Xf\,dv_g = 0
$$
for all $f\in U^-$, i.e. all components $X^i$, $i=1,\ldots,n+1$ are orthogonal to $U^-$. Thus, $Q_S(X^i)\geqslant 0$ and one concludes
$$
0>Q_{E}(X) = \sum_{i=1}^{n+1}Q_{S}(X^i)\geqslant 0.
$$
This contradiction completes the proof.
\end{proof}

\begin{theorem}
\label{indnul:thm}
Let $\Phi\colon (M,g)\to\mathbb{S}^{2m}$ be a linearly full harmonic map, where $M$ is either $\mathbb{S}^2$ or $\mathbb{RP}^2$. Then one has
\begin{equation}
\label{indnul:ineq}
\ind_S(\Phi)\geqslant \frac{\ind_E(\Phi) +\nul_E(\Phi)-m(2m+1)}{2m+1}
\end{equation}
\end{theorem}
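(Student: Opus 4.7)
The plan is to refine the dimension-counting argument from the previous theorem so that it also captures Jacobi fields, paying a price of $m(2m+1)$ for a canonical subspace of Jacobi fields coming from the isometry action of $SO(2m+1)$ on the target. Let $U^-\subset L^2(M)$ be the negative space of $\mathcal{L}_{g,\Phi}$, with $\dim U^-=\ind_S(\Phi)$, and let $W\subset\dom Q_{\Phi,E}$ be the non-positive space of $Q_{\Phi,E}$, with $\dim W=\ind_E(\Phi)+\nul_E(\Phi)$. Define the linear map
\[
T\colon W\longrightarrow (U^-)^{2m+1},\qquad T(V)=\bigl(\pi_{U^-}V^1,\dots,\pi_{U^-}V^{2m+1}\bigr),
\]
where $\pi_{U^-}$ denotes the $L^2$-orthogonal projection. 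Then
\[
\dim\ker T\;\geqslant\;\ind_E(\Phi)+\nul_E(\Phi)-(2m+1)\ind_S(\Phi).
\]

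For $V\in\ker T$ every component $V^j$ is $L^2$-orthogonal to $U^-$, so $Q_{\Phi,S}(V^j)\geqslant 0$, whereas $V\in W$ forces $Q_{\Phi,E}(V)=\sum_j Q_{\Phi,S}(V^j)\leqslant 0$. Both relations together force $Q_{\Phi,S}(V^j)=0$ for each $j$, hence $V^j\in\ker\mathcal{L}_{g,\Phi}$; since then $\Delta_g V^j=|\nabla\Phi|_g^2 V^j$ componentwise, $V$ itself is automatically a Jacobi field. Denote by $K$ the space of such $V$. The sought inequality~\eqref{indnul:ineq} is then equivalent to
\[
\dim K\;\leqslant\; m(2m+1).
\]

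For any $A\in\mathfrak{so}(2m+1)$ the field $V_A(x):=A\Phi(x)$ satisfies $V_A\cdot\Phi=0$, its components $\sum_l A_{jl}\Phi^l$ lie in $\ker\mathcal{L}_{g,\Phi}$ by the harmonic map equation~\eqref{harmonic:equation}, and $A\mapsto V_A$ is injective because $\Phi$ is linearly full. This produces an $m(2m+1)$-dimensional subspace of $K$, and the goal is to show this exhausts $K$. For this I would invoke the first-order twistor correspondence established in Section~\ref{1twistor:sec}: every Jacobi field $V$ along $\Phi$ lifts to a twistor field on the twistor curve associated to $\Phi$ inside the twistor space of $\mathbb{S}^{2m}$, and the hypothesis $V^j\in\ker\mathcal{L}_{g,\Phi}$ for all $j$ should force this lift to extend to a global holomorphic vector field on the ambient twistor space preserving the horizontal distribution. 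Such global sections correspond under the $SO(2m+1)$-action on the twistor space to elements of $\mathfrak{so}(2m+1)$, completing the bound on $\dim K$.

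The main obstacle is exactly this twistor rigidity. Even when $\nul_S(\Phi)>2m+1$, so that $\ker\mathcal{L}_{g,\Phi}$ strictly contains $\mathrm{span}(\Phi^1,\dots,\Phi^{2m+1})$, an abstract count of $\mathcal{L}$-harmonic components does not by itself rule out exotic Jacobi fields in $K$; the extra $\mathcal{L}$-harmonic functions would have to assemble into a vector-valued section orthogonal to $\Phi$, and it is the twistor lift that obstructs this. The hypothesis $M\in\{\mathbb{S}^2,\mathbb{RP}^2\}$ is what guarantees the requisite holomorphic structure on twistor curves (via Theorems~\ref{Barbosa:thm} and~\ref{BarbosaRP:thm}) and the vanishing of the relevant cohomology on the twistor space needed to globalise the lift.
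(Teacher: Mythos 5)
Your dimension count is the same as the paper's: you take the non-positive space $W$ of $Q_{\Phi,E}$, project component-wise onto the negative space $U^-$ of $Q_{\Phi,S}$, and note that anything in the kernel of this projection has all components in $\ker\mathcal L_{g,\Phi}$ and is therefore a Jacobi field. The paper encapsulates the missing ingredient as Proposition~\ref{infinitesimal isometry}: any Jacobi field all of whose components lie in $\ker\mathcal L_{g,\Phi}$ has the form $A\Phi$ for a \emph{constant} $A\in\mathfrak{so}(2m+1,\mathbb{R})$, which gives precisely the bound $\dim K\leqslant m(2m+1)$ you need. So the overall plan is identical; the content of the theorem is entirely in this rigidity statement.

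Where your proposal genuinely falls short is in the justification of that rigidity. You write that invoking the first-order twistor correspondence ``should force'' the lift to extend to a global holomorphic vector field on the ambient twistor space, and that it is ``the twistor lift that obstructs'' exotic Jacobi fields -- but you give no mechanism for why the hypothesis $V^j\in\ker\mathcal L_{g,\Phi}$ would propagate from the curve to a global object. This is not how the paper proceeds. The paper's proof of Proposition~\ref{infinitesimal isometry} does not globalize a section of $\Psi^*T^{(1,0)}\mathcal Z_m$; instead it works directly with the harmonic sequence. The hypothesis translates into $\hat\gamma_0=\hat\gamma_{-1}=0$, the recursion~\eqref{gammahat:def} then kills all $\hat\gamma_{-p}$, and this makes the sequence $\{V_{-p}\}$ satisfy the \emph{same} first-order system~\eqref{harmseq:eq} as $\{f_{-p}\}$. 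One then defines a bundle map $A$ on $\Sigma'$ by $Af_{-p}=V_{-p}$ and its conjugate on $L_{>0}$, and the coincidence of the two recursions plus Corollary~\ref{2isotropy:cor} force $A$ to be a constant skew matrix. There is also a quantitative issue you would have to address with your proposed route: the holomorphic vector fields on $\mathcal Z_m$ preserving the horizontal distribution correspond to $\mathfrak{so}(2m+1,\mathbb{C})$, whose real dimension is $2m(2m+1)$, so picking out the real $m(2m+1)$-dimensional slice requires using the reality constraint $V_+=\overline{V_-}$ at some point, which your sketch omits. In short: right skeleton, but the heart of the argument -- the rigidity -- is flagged rather than proved, and the route you suggest for it is both vaguer and different from what actually closes the gap.
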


\begin{proof}
The proof is very similar to the previous one. The idea is to make use of Jacobi fields. The new ingredient is the following proposition, whose proof is postponed until Section~\ref{jacobi:sec}.
\begin{proposition}
\label{infinitesimal isometry}
Let $\Phi\colon (M,g)\to\mathbb{S}^{2m}$ be a linearly full harmonic map, where $M$ is either $\mathbb{S}^2$ or $\mathbb{RP}^2$. Let $X$ be a Jacobi field, such that 
$$
\Delta_g X^i = |\nabla\Phi|_g^2 X^i,
$$
i.e. for all $i=1,\ldots, 2m+1$ one has $X^i\in\ker(\mathcal L_\Phi)$.
Then $X$ is trivial, in a sense that there exists a constant matrix $A\in \mathfrak{so}(2m+1,\mathbb{R})$ such that $X=A\Phi$.
\end{proposition}
\begin{remark} In~\cite{Barbosa} it is shown that if two linearly full harmonic maps $\Phi_1,\Phi_2\colon\mathbb{S}^2\to\mathbb{S}^{2m}$ have the same energy density, then they differ by an isometry of $\mathbb{S}^{2m}$. Proposition~\ref{infinitesimal isometry} is an infinitesimal version of this fact.
\end{remark}

Assume the contrary to inequality~\eqref{indnul:ineq}. Let $U^-$ be the negative subspace of $Q_S$. Since $\dim \mathfrak{so}(2m+1,\mathbb{R}) = m(2m+1)$, there exists a non-trivial vector field $X$ (i.e. not of the form $A\Phi$ for $A\in \mathfrak{so}(2m+1,\mathbb{R})$) such that $Q_{E}(X)\leqslant 0$ and 
$$
\int_M Xf\,dv_g = 0
$$
for all $f\in U^-$, i.e. all components $X^i$, $i=1,\ldots,2m+1$ are orthogonal to $U^-$. Thus, $Q_{S}(X^i)\geqslant 0$ and one concludes
$$
0\geqslant Q_{E}(X) = \sum_{i=1}^{n+1}Q_{\Phi}(X^i)\geqslant 0.
$$
Therefore, $X^i\in\ker(\mathcal L_\Phi)$ and, thus, $X$ is a Jacobi field. We arrive at a contradiction with Proposition~\ref{infinitesimal isometry}.
\end{proof}

\subsection{Bound for the energy index I}
\label{indEI:sec}

The goal of this section is to prove Theorem~\ref{MainTheorem:SEintro}. Let us restate it.
\begin{theorem}
\label{index:thm}
Let $\Phi\colon \mathbb{S}^2\to\mathbb{S}^{2m}$ be a linearly full harmonic map. Then
$$
\ind_E(\Phi)\geqslant 2(m-1)\ind_S(\Phi).
$$
\end{theorem}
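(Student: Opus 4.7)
\emph{Plan.} I would proceed by induction on $m$, the base case $m=1$ being trivial since $2(m-1)=0$. For the inductive step, my strategy rests on two pieces of twistor machinery developed in later sections of the paper: first, the Barbosa action of $SO(2m+1,\mathbb{C})$ on the moduli space of linearly full harmonic spheres $\mathbb{S}^2\to\mathbb{S}^{2m}$, defined by the natural action of $SO(2m+1,\mathbb{C})$ on horizontal holomorphic curves in the twistor space $Z^m$; and second, the first-order twistor correspondence for Jacobi fields from Section~\ref{1twistor:sec}, which identifies Jacobi fields along $\Phi$ with ``twistor fields'' along its twistor lift.

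The first step would be to upgrade Ejiri's invariance of the spectral index under the Barbosa action to an analogous invariance of the energy index. Via the first-order correspondence, Jacobi fields translate to twistor fields, and the second-variation quadratic form $Q_{\Phi,E}$ should be expressible on the twistor-field side as a form that transforms equivariantly under $SO(2m+1,\mathbb{C})$; its signature, and in particular $\ind_E(\Phi)$ and $\nul_E(\Phi)$, would then be constant along Barbosa orbits.

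With invariance in hand, I would select a one-parameter subgroup $g_t = \exp(tA)$ in $SO(2m+1,\mathbb{C})$ whose orbit of $\Phi$ degenerates as $t\to\infty$ to a harmonic map $\Phi_\infty\colon\mathbb{S}^2\to\mathbb{S}^{2m}$ whose image lies in a proper equatorial subsphere $\mathbb{S}^{2(m-1)}\subset\mathbb{S}^{2m}$, iterating the construction if necessary so that $\Phi_\infty$ is linearly full in this smaller sphere. For such a factored map, the domain $\dom Q_{\Phi_\infty,E}$ decomposes orthogonally into an intrinsic part, made of sections of $\Phi_\infty^{*}T\mathbb{S}^{2(m-1)}$, plus the two transverse coordinate directions $e_\perp$; on each of the latter, any $V = f e_\perp$ automatically satisfies the constraint $V\cdot\Phi_\infty = 0$ and one has $Q_E(V) = Q_S(f)$. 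The induction hypothesis applied to $\Phi_\infty$ as a map into $\mathbb{S}^{2(m-1)}$ contributes $2(m-2)\ind_S(\Phi_\infty)$ negative directions in the intrinsic part, while the two transverse coordinates furnish an additional $2\ind_S(\Phi_\infty)$ negative directions of the form $V = f e_\perp$ with $f$ ranging over the negative eigenspace of $\mathcal L_{\Phi_\infty}$. Combined with the group invariances $\ind_E(\Phi) = \ind_E(\Phi_t)$ and $\ind_S(\Phi) = \ind_S(\Phi_t)$, together with a semicontinuity analysis relating the indices of $\Phi_t$ to those of $\Phi_\infty$, this would yield $\ind_E(\Phi) \geqslant 2(m-1)\ind_S(\Phi)$.

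The principal obstacle will be the rigorous treatment of the degeneration: choosing $A\in\mathfrak{so}(2m+1,\mathbb{C})$ so that the orbit limit genuinely factors through a proper equatorial subsphere (rather than escaping to a bubble tree), establishing the necessary continuity or semicontinuity of $\ind_E$ and $\ind_S$ across the family, and promoting the first-order twistor correspondence into a fully group-equivariant description of the second-variation quadratic form so that Ejiri-type invariance holds for $\ind_E$ as well as for $\ind_S$. The vector-valued nature of Jacobi fields, in contrast to the scalar setup for $\mathcal L_\Phi$, is precisely what the twistor lift for Jacobi fields is engineered to handle, and translating that lift into an invariance statement for the signature of $Q_{\Phi,E}$ should be the technical heart of the argument.
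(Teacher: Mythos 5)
Your proposal follows essentially the same route as the paper: you invoke the Barbosa $SO(2m+1,\mathbb{C})$-action via twistor lifts, use the first-order twistor correspondence to transfer Jacobi fields and show the action preserves the energy index/nullity (this is the paper's Proposition~\ref{invariance:prop}), degenerate to a map into an equatorial $\mathbb{S}^{2(m-1)}$ via Ejiri's one-parameter subgroup argument with the attendant semicontinuity of $\ind_E$ and invariance of $\ind_S$, decompose the second-variation form for the factored map into an intrinsic block plus two normal scalar copies of $Q_S$, and close by induction on $m$. This matches the paper's proof (which phrases the same recursion as an iteration down to a two-dimensional subsphere and proves $\nul_E$-invariance rather than full signature-invariance, deducing $\ind_E$-invariance from upper/lower semicontinuity), so the two arguments are the same in substance.
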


The idea is to use the action of the group $SO(2m+1,\mathbb{C})$ on the space of linearly full harmonic maps $\mathbb{S}^2\to\mathbb{S}^{2m}$ defined by Barbosa~\cite{Barbosa} and described in Section~\ref{soaction:sec}. In order to do that, one needs to understand how this action affects $\ind_S$ and $\ind_E$. For the spectral index this problem was solved by Ejiri.
\begin{theorem}[Ejiri~\cite{Ejiri}, see also~\cite{Kotani}]
\label{Ejiri:thm}
For every linearly full harmonic map $\Phi\colon \mathbb{S}^2 \to \mathbb{S}^{2m}$ there exists a $1$-parameter subgroup $A_t\subset SO(2m+1,\mathbb{C})$ such that harmonic maps $\Phi_t = A_t\Phi$ satisfy the following properties,
\begin{itemize}
\item[1)] harmonic maps $\Phi_t$ converge in compact-open topology to a harmonic map $\Phi_\infty$ into a subsphere $\mathbb{S}^{2(m-1)}\subset \mathbb S^{2m}$.
\item[2)] for all $t$ one has $\nul_S(\Phi_t)=\nul_S(\Phi) = \nul_S(\Phi_\infty)$ and, as a result, $\ind_S(\Phi_t)=\ind_S(\Phi) = \ind_S(\Phi_\infty)$. 
\end{itemize} 
\end{theorem}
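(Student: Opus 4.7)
The plan is to leverage twistor theory to reduce both the $SO(2m+1,\mathbb{C})$-action and the analysis of $\ker\mathcal L_\Phi$ to algebraic operations on the harmonic sequence, so that the desired degeneration can be exhibited and its nullity invariance verified by direct computation. By the results of Calabi and Barbosa recalled in Theorem~\ref{Barbosa:thm}, $\Phi$ is determined by its harmonic sequence $\Phi=\Phi_{(0)},\Phi_{(1)},\ldots,\Phi_{(m)}$ of mutually orthogonal isotropic sections of $\underline{\mathbb{C}}^{2m+1}$ obtained by iterated $\partial$-differentiation, or equivalently by a horizontal holomorphic curve $\widetilde\Phi\colon\mathbb{S}^2\to Z_m$ in the twistor space $Z_m$ of $\mathbb{S}^{2m}$. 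The group $SO(2m+1,\mathbb{C})$ acts biholomorphically on $Z_m$ preserving horizontality, and for $A\in SO(2m+1,\mathbb{C})$ one defines $A\Phi$ as the harmonic map whose twistor lift equals $A\widetilde\Phi$.

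Next I would construct $A_t$ explicitly. Complete a generic isotropic vector $v\in\mathbb{C}^{2m+1}$ to a basis $\{v,w,e_1,\ldots,e_{2m-1}\}$ with $w$ isotropic, $v\cdot w=1$, and the $e_i$ spanning an orthonormal complement $V:=\{v,w\}^\perp$. Take $A_t\in SO(2m+1,\mathbb{C})$ acting as $v\mapsto e^t v$, $w\mapsto e^{-t}w$, trivially on $V$. A direct computation on the harmonic sequence shows that as $t\to+\infty$ the $v$-components of $A_t\widetilde\Phi$ blow up while the $w$-components decay, and after projecting $A_t\Phi$ back onto $\mathbb{S}^{2m}\subset\mathbb{R}^{2m+1}$ using the real structure, the limit $\Phi_\infty$ takes values in $V\cap\mathbb{R}^{2m+1}\cong\mathbb{R}^{2m-1}$, i.e.\ in a subsphere $\mathbb{S}^{2(m-1)}$; compact-open convergence would follow from the explicit formulas together with the constancy of the degree along the $SO(2m+1,\mathbb{C})$-orbit.

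For the nullity invariance at finite $t$, I would give an algebraic description of $\ker\mathcal L_\Phi$ in terms of the harmonic sequence. The key point is that elements of $\ker\mathcal L_\Phi$ are encoded as sections of a holomorphic bundle on $\mathbb{CP}^1$ built canonically from $\{\Phi_{(j)}\}$, and that this bundle transforms covariantly under the $SO(2m+1,\mathbb{C})$-action. Applying $A$ then yields an explicit isomorphism between $\ker\mathcal L_\Phi$ and $\ker\mathcal L_{A\Phi}$, establishing $\nul_S(\Phi_t)=\nul_S(\Phi)$ for all finite $t$.

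The hard part will be ruling out a jump in nullity at the limit. Upper semicontinuity of $\dim\ker$ only yields $\nul_S(\Phi_\infty)\geq\limsup_{t\to\infty}\nul_S(\Phi_t)$, so one must argue in the other direction. My plan is to show that for $v$ chosen outside a proper algebraic subset depending on $\Phi$, the harmonic sequence of $\Phi_\infty$ is the honest $t\to\infty$ limit of that of $\Phi_t$ with no rank drop; this ensures $\Phi_\infty$ remains linearly full in its subsphere and that the algebraic description of $\ker\mathcal L$ above extends continuously across the limit, forcing the dimension to be constant. The genericity of $v$ is the crux: existence of a nondegenerate direction will follow from $\Phi$ being linearly full together with a dimension count on the variety of bad isotropic vectors.
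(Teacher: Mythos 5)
The paper does not prove this statement; it is quoted from Ejiri~\cite{Ejiri} (and Kotani~\cite{Kotani}) and used as a black box, with only Remark~\ref{convergence:remark} clarifying that Ejiri's convergence of twistor lifts upgrades to $C^\infty$-convergence of the maps via~\cite{FernandezTopology} and analyticity. So there is no internal proof for your proposal to be compared against.

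As an independent sketch, your plan is in the right spirit and close to Ejiri's actual twistor-theoretic approach, but it is an outline rather than a proof, and the two load-bearing steps are not supplied. First, the invariance of $\nul_S$ along the orbit at finite $t$ is not automatic: $\mathcal L_{\Phi}=\Delta-|\nabla\Phi|^2$ depends on the energy density, which changes under the $SO(2m+1,\mathbb{C})$-action, so the spaces $\ker\mathcal L_{\Phi_t}$ live with respect to different potentials and there is no obvious map between them. You gesture at ``an algebraic description of $\ker\mathcal L_\Phi$ \ldots built canonically from $\{\Phi_{(j)}\}$'' that ``transforms covariantly,'' but this description \emph{is} the substance of Ejiri's argument; without writing it down and verifying the covariance, the claim is unsupported. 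Second, and more seriously, you correctly identify the jump-at-the-limit issue (upper semicontinuity only gives the wrong-way inequality) and correctly flag it as ``the hard part,'' but your resolution---choose $v$ generic so that the harmonic sequence of $\Phi_\infty$ is the honest limit with no rank drop---is a hope, not an argument. You have not shown that such a $v$ exists for the given $\Phi$, nor that absence of rank drop in the limiting harmonic sequence actually forces the algebraic description of the kernel to be continuous across $t=\infty$ (the space of extra eigenfunctions could in principle degenerate for reasons not visible in the sequence ranks alone). Finally, the construction of $A_t$ as the one-parameter group scaling a generic isotropic $v$ is plausible but unverified: one would need to check that this specific family really does push the twistor lift into the boundary stratum corresponding to a map into $\mathbb{S}^{2(m-1)}$, which requires relating $v$ to the directrix $L_{-m}$ of $\Phi$, not just taking $v$ arbitrary.

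In short, since the paper itself does not prove this statement, there is nothing in the text to match your sketch against; as a stand-alone attempt it identifies the right ingredients but leaves precisely the nontrivial parts of Ejiri's theorem unproved.
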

\begin{remark}
\label{convergence:remark}
Technically speaking, in~\cite{Ejiri} Ejiri showed the convergence of the corresponding twistor lifts. However, it is known that twistor correspondence is a homeomorphism, see e.g.~\cite{FernandezTopology}. Furthermore, harmonic maps $\mathbb{S}^2\to\mathbb{S}^{2m}$ are analytic and, thus, convergence in compact-open topology implies $C^\infty$-convergence. 
\end{remark}

Ejiri used this theorem to show that the following holds.
\begin{corollary}
\label{Ejiri:cor}
For every harmonic map $\Phi\colon \mathbb{S}^2 \to \mathbb{S}^{2m}$ of index $d>1$ one has $\ind_S(\Phi)\geqslant d+1$.
\end{corollary}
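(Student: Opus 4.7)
The plan is induction on $m$, with Theorem~\ref{Ejiri:thm} providing the reduction mechanism. (I read ``index $d$'' in the statement as ``degree $d$'' in the sense of Theorem~\ref{Barbosa:thm}; this is what is actually needed below.)

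For the inductive step, given a linearly full $\Phi\colon\mathbb{S}^2\to\mathbb{S}^{2m}$ of degree $d>1$, I apply Theorem~\ref{Ejiri:thm} to produce the family $\Phi_t=A_t\Phi$ converging in $C^\infty$, by Remark~\ref{convergence:remark}, to a harmonic map $\Phi_\infty\colon\mathbb{S}^2\to\mathbb{S}^{2(m-1)}$ with $\ind_S(\Phi_\infty)=\ind_S(\Phi)$. Since the $SO(2m+1,\mathbb{C})$-action preserves the degree and energies pass to the $C^\infty$-limit, $\Phi_\infty$ still has energy $4\pi d$. If $\Phi_\infty$ is linearly full in $\mathbb{S}^{2(m-1)}$, the induction hypothesis applies directly. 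Otherwise, Theorem~\ref{Barbosa:thm} forces the image of $\Phi_\infty$ to lie in a unique minimal even-dimensional subsphere $\mathbb{S}^{2k}$ with $1\leqslant k<m-1$, where $\Phi_\infty$ is linearly full of the same degree $d$, so the induction hypothesis applies there.

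For the base case $m=1$, a linearly full harmonic map $\mathbb{S}^2\to\mathbb{S}^2$ of degree $d>1$ is, up to ambient isometry, a holomorphic rational map; the pulled-back metric $g_\Phi$ is a conical metric of total area $4\pi d$. The strict isoperimetric bound $\bar\lambda_k(\mathbb{S}^2,g)<8\pi k$ from Theorem~\ref{KNPP:thm}, extended from smooth to conical metrics as in~\cite{KarpukhinYY}, yields $\lambda_k(g_\Phi)<2k/d\leqslant 2$ for each $1\leqslant k\leqslant d$, so $\lambda_0,\ldots,\lambda_d$ are all strictly less than $2$ and $\ind_S(\Phi)\geqslant d+1$.

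The main obstacle I expect is the base case. The inductive step is essentially formal bookkeeping once Theorem~\ref{Ejiri:thm} and the $SO(2m+1,\mathbb{C})$-invariance of degree are in hand, whereas the $m=1$ step requires the strict KNPP inequality to be valid for the conical metric $g_\Phi$ (with the correct Friedrichs extension at the $2d-2$ branch points), which is a genuinely analytic input. A secondary subtlety is to make sure no energy escapes during the $SO(2m+1,\mathbb{C})$-degeneration, so that the limit really has degree $d$ rather than some smaller value.
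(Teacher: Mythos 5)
The paper does not give a proof of this corollary at all; the statement is simply cited to Ejiri~\cite{Ejiri}. So there is no ``paper proof'' to compare against in the strict sense. The closest internal argument is the proof of Proposition~\ref{indS:prop}, which uses the same reduction-to-$m=1$ mechanism (repeated application of Theorem~\ref{Ejiri:thm}) and then appeals to the Montiel--Ros / Ejiri--Kotani count of Proposition~\ref{indexS2:prop} together with semicontinuity of $\ind_S+\nul_S$. Your inductive step is essentially identical to that reduction and is sound, given the $C^\infty$-convergence and energy conservation guaranteed by Remark~\ref{convergence:remark}; your worry about energy escaping is legitimate in spirit but is exactly what that remark resolves. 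Your reading of ``index $d$'' as ``degree $d$'' is correct—the statement contains a typo.

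Your base case, however, takes a genuinely different route: instead of the known index count for holomorphic rational maps, you invoke the strict isoperimetric bound $\bar\lambda_k(\mathbb{S}^2,g)<8\pi k$ of Theorem~\ref{KNPP:thm} applied to the pull-back metric $g_\Phi$. The arithmetic is right: for $1\leqslant k<d$ even the non-strict bound gives $\lambda_k(g_\Phi)\leqslant 2k/d<2$, and only $k=d$ needs strictness, which is available since $d>1$ falls in the $k>1$ regime of KNPP. This is a short and elegant alternative to Proposition~\ref{indexS2:prop}.

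The genuine gap is the one you flag yourself, and it is not as dismissible as you suggest. Theorem~\ref{KNPP:thm} is stated only for smooth metrics, while $g_\Phi$ has conical points of angle $2\pi(\nu+1)$ at the branch points. Approximation by smooth metrics gives $\bar\lambda_d(g_\Phi)\leqslant 8\pi d$, but not strictness, and strictness is exactly what you need for the $k=d$ step. The citation to~\cite{KarpukhinYY} does not cover this: that paper treats the Yang--Yau bound on $\lambda_1$ for conical metrics, not the KNPP bound for arbitrary $k$. To close the gap one would have to either re-run the test-function construction of~\cite{KNPP} in the conical setting, or combine the non-strict bound with a separate argument ruling out $\lambda_d(g_\Phi)=2$ (for instance via the extremality characterisation of Theorem~\ref{extremal:thm}, though one must be careful to avoid circularity there). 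The paper's own base case via Proposition~\ref{indexS2:prop} avoids this analytic issue entirely, which is presumably why it is preferred.
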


In the present paper we show that the action of $SO(2m+1,\mathbb{C})$ preserves the energy index as well.

\begin{proposition}
\label{invariance:prop}
The action of $SO(2m+1,\mathbb{C})$ on the moduli space of linearly full harmonic maps $\mathbb{S}^2\to\mathbb{S}^{2m}$ preserves $\nul_E$.
\end{proposition}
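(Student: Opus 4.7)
The plan is to reduce the question to twistor theory. Each linearly full harmonic map $\Phi\colon\mathbb{S}^2\to\mathbb{S}^{2m}$ is encoded by its twistor lift $\psi_\Phi\colon\mathbb{S}^2\to Z$, a horizontal holomorphic map into the twistor space $Z=SO(2m+1)/U(m)$. The first order twistor correspondence to be established in Section~\ref{1twistor:sec} will provide a linear isomorphism between the space of Jacobi fields along $\Phi$ and the space of twistor fields along $\psi_\Phi$ (sections of an appropriate bundle on $\mathbb{S}^2$ satisfying the first order conditions that encode infinitesimal horizontality and holomorphicity). In particular,
$$
\nul_E(\Phi) = \dim\{\text{twistor fields along } \psi_\Phi\}.
$$

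The second ingredient, recalled in Section~\ref{soaction:sec}, is that the Barbosa $SO(2m+1,\mathbb{C})$-action on linearly full harmonic maps is by construction induced from the natural holomorphic action of $SO(2m+1,\mathbb{C})$ on $Z$: for $A\in SO(2m+1,\mathbb{C})$, the twistor lift of $A\cdot\Phi$ is $A\circ\psi_\Phi$. Since $A$ acts on $Z$ as a biholomorphism that preserves the horizontal distribution, its differential $dA$ acts fibrewise on the bundle in which twistor fields live and intertwines the defining first order equations. Consequently, $V\mapsto dA(V)$ is a linear isomorphism between twistor fields along $\psi_\Phi$ and twistor fields along $A\circ\psi_\Phi=\psi_{A\cdot\Phi}$. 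Combining this with two applications of the first order twistor correspondence produces a linear isomorphism $\ker J_\Phi\cong\ker J_{A\cdot\Phi}$, whence $\nul_E(\Phi)=\nul_E(A\cdot\Phi)$.

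The main technical obstacle is the first order twistor correspondence itself. The easy direction (from twistor fields to Jacobi fields) amounts to an infinitesimal version of the classical fact that the projection of a horizontal holomorphic map from $Z$ to $\mathbb{S}^{2m}$ is harmonic, and is in the spirit of the construction of integrable Jacobi fields in Remark~\ref{Jacobi:remark}. The harder direction requires showing that \emph{every} Jacobi field---integrable or not---arises from a twistor field; this extends the analogous result of~\cite{LW} from $\mathbb{S}^4$ to arbitrary $\mathbb{S}^{2m}$ and is where the real work lies. Once this correspondence is in hand, the $SO(2m+1,\mathbb{C})$-equivariance stated in the proposition is an immediate consequence of the naturality of the construction, exactly in parallel to Ejiri's proof that $\nul_S$ is preserved (Theorem~\ref{Ejiri:thm}).
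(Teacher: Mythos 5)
Your proposal follows essentially the same route as the paper: reduce via the first-order twistor correspondence (Theorem~\ref{1stordertwistor:thm}) to showing that the induced map $A_*$ on fields along the twistor lift sends twistor fields to twistor fields, then transport through the correspondence on both sides.

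One caveat is worth flagging, because it is where the paper's Section~\ref{soaction:sec} actually does work that your sketch treats as automatic ``naturality.'' While the $SO(2m+1,\mathbb{C})$-action on $\mathcal Z_m$ is holomorphic and preserves the horizontal distribution $\mathcal{HZ}_m$, it does \emph{not} preserve the vertical distribution $\mathcal{VZ}_m=\ker d\pi_m$: the twistor projection $\pi_m$ is only $SO(2m+1,\mathbb{R})$-equivariant, and indeed $A\bar V\neq\overline{AV}$ for non-real $A$. Since condition 2) in the definition of a twistor field involves the vertical projection $(\nabla_z l)^V$, the claim that $A_*$ ``intertwines the defining first order equations'' is not a formal consequence of $A$ acting biholomorphically; the paper verifies it by an explicit computation using the harmonic-sequence formulae, the identity $AL_{\leqslant 0}=L^A_{\leqslant 0}$, and the adjoint identity $l^*|_{L_{<0}}=-l^V$ from Proposition~\ref{adjoint:prop}. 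Your appeal to the geometric reading of condition 2) as ``infinitesimal horizontality'' does give the right intuition for why the result holds (the property of $d\Psi_t$ lying in $\mathcal{HZ}_m$ up to $o(t)$ only references the horizontal distribution, which is preserved), but that equivalence is itself nontrivial and is only sketched after Proposition~\ref{twistoradjoint:prop}, so the step should not be presented as immediate.
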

The proof is postponed until Section~\ref{soaction:sec}.

\begin{corollary}
\label{invariance:cor}
For $\Phi_t$ defined as in Theorem~\ref{Ejiri:thm} one has $\nul_E(\Phi_t)=\nul_E(\Phi) \leqslant \nul_E(\Phi_\infty)$. As a result, $\ind_E(\Phi_t)=\ind_E(\Phi) \geqslant \ind_E(\Phi_\infty)$.
\end{corollary}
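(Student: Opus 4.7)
The plan is to combine Proposition~\ref{invariance:prop} with standard semicontinuity of the spectrum of a continuously varying family of self-adjoint operators. The constancy of the energy nullity along an $SO(2m+1,\mathbb{C})$-orbit does the heavy lifting, and the comparison with the limit map follows from the smooth convergence $\Phi_t\to\Phi_\infty$ provided by Remark~\ref{convergence:remark}.

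First, the equality $\nul_E(\Phi_t)=\nul_E(\Phi)$ is an immediate application of Proposition~\ref{invariance:prop} to $A_t\in SO(2m+1,\mathbb{C})$ for each $t$. To upgrade this to $\ind_E(\Phi_t)=\ind_E(\Phi)$, I would note that the Jacobi operators $J_{g,\Phi_t}$ form a continuous family and hence have continuously varying eigenvalues. The energy index, being the count of strictly negative eigenvalues, can change only when an eigenvalue crosses zero; at any such crossing the nullity would be strictly larger than its generic value. Since the nullity is already known to be constant in $t$ by the previous sentence, no crossing can occur and $\ind_E(\Phi_t)$ is constant along the family.

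Second, to compare with $\Phi_\infty$, I would use Remark~\ref{convergence:remark} to promote the compact-open convergence $\Phi_t\to\Phi_\infty$ to $C^\infty$-convergence, so the coefficients of $J_{g,\Phi_t}$ converge smoothly to those of $J_{g,\Phi_\infty}$. The min-max principle applied to the forms $Q_{\Phi_t,E}$ then yields $\mu_k(t)\to\mu_k(\infty)$ for each eigenvalue, in order. An eigenvalue that is strictly negative for all large $t$ is therefore at worst zero in the limit, so strictly negative modes can only be lost (by becoming zero modes), never gained; this gives $\ind_E(\Phi_\infty)\leqslant\ind_E(\Phi)$. Symmetrically, eigenvalues identically zero along the family remain zero at infinity, and additional eigenvalues from either side may join them, so $\nul_E(\Phi_\infty)\geqslant\nul_E(\Phi)$.

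The only genuine technical point is that $J_{g,\Phi_t}$ acts on sections of the varying bundle $\underline{\Phi_t}^\perp\subset \mathbb{S}^2\times\mathbb{R}^{2m+1}$, so its spectrum lives on $t$-dependent Hilbert spaces. I would handle this by working uniformly with the quadratic form $Q_{\Phi_t,E}$ on $\mathbb{R}^{2m+1}$-valued functions subject to the pointwise constraint $V\cdot\Phi_t=0$; the orthogonal projection onto this subspace depends smoothly on $t$ by the $C^\infty$-convergence, and a standard min-max comparison transfers eigenvalue estimates across parameters. This is the only step requiring care, but it is routine once the smoothness of the convergence is in hand.
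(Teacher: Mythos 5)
Your proposal is correct and follows essentially the same route as the paper. The paper states the argument more compactly by invoking lower semicontinuity of $\ind_E$ and upper semicontinuity of $\ind_E+\nul_E$ (both consequences of eigenvalue continuity, via Remark~\ref{convergence:remark}) and then feeding in the constancy of $\nul_E$ from Proposition~\ref{invariance:prop}; your "no crossing can occur because nullity is constant" and "negative eigenvalues can only be lost to the kernel, never gained" paragraphs are exactly the unpacked versions of those two semicontinuity facts.
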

\begin{proof}
Remark~\ref{convergence:remark} implies continuity of eigenvalues of the Jacobi operator $J_\Phi$ with respect to $\Phi$. Thus, the function $\ind_E+\nul_E$ is upper semicontinuous whereas the function $\ind_E$ is lower semicontinuous. Thus, the assertion easily follows from Proposition~\ref{invariance:prop}.
\end{proof}

\begin{proof}[Proof of Theorem~\ref{index:thm}]
Similarly to the argument of Ejiri in~\cite{Ejiri}, we apply Theorem~\ref{Ejiri:thm} repeatedly. Starting with $\Phi$ we obtain a map $\Phi_\infty$ with the image inside a proper subsphere.

\begin{proposition}
Let $\Psi\colon (M,g)\to\mathbb{S}^n$ be a harmonic map with the image in a $k$-dimensional proper equatorial subsphere. Let $\widetilde\Psi$ be the same harmonic map considered as a linearly full map to $\mathbb{S}^k$. Then $\ind_E(\Psi) = (n-k)\ind_S(\Psi) + \ind_E(\widetilde\Psi)$.
\end{proposition}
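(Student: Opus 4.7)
The plan is to show that, with respect to the orthogonal splitting $\mathbb{R}^{n+1}=\mathbb{R}^{k+1}\oplus\mathbb{R}^{n-k}$ in which the equatorial $\mathbb{S}^k$ sits in $\mathbb{R}^{k+1}$, the Jacobi operator $J_\Psi$ decouples as a direct sum of the Jacobi operator $J_{\widetilde\Psi}$ on $\mathbb{S}^k$-valued deformations and $(n-k)$ copies of the scalar operator $\mathcal L_\Psi$. Once this decoupling is established the proposition is just a count of negative eigenvalues.

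First I would unpack the domain. A field $V\in\dom Q_{\Psi,E}$ is a map $V\colon M\to\mathbb{R}^{n+1}$ with $V(x)\cdot\Psi(x)=0$. Writing $V=V_\|+V_\perp$ with $V_\|(x)\in\mathbb{R}^{k+1}$ and $V_\perp(x)\in\mathbb{R}^{n-k}$, the fact that $\Psi$ takes values in $\mathbb{R}^{k+1}$ means the constraint $V\cdot\Psi=0$ reads $V_\|\cdot\Psi=0$, while $V_\perp$ is unconstrained. Thus
\[
\dom Q_{\Psi,E}\;\cong\;\dom Q_{\widetilde\Psi,E}\;\oplus\;C^\infty(M,\mathbb{R}^{n-k}),
\]
and this splitting is orthogonal in the $L^2$ inner product determined by any metric in $[g]$.

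Next I would verify that $J_\Psi$ respects this splitting. Using the coordinate formula
\[
J_{\Psi}(V)=\pi_{\underline{\Psi}^\perp}\bigl(\Delta_g V-|\nabla\Psi|_g^2 V\bigr),
\]
observe that $\Delta_g$ and multiplication by $|\nabla\Psi|_g^2$ preserve the $\mathbb{R}^{k+1}$ and $\mathbb{R}^{n-k}$ components separately. Because $\Psi$ has no $\mathbb{R}^{n-k}$ component, the bundle $\underline{\Psi}^\perp$ already contains the full factor $M\times\mathbb{R}^{n-k}$, so $\pi_{\underline{\Psi}^\perp}$ acts as the identity on $V_\perp$ and as the $\mathbb{R}^{k+1}$-orthogonal projection onto $\underline{\Psi}_{\mathbb{R}^{k+1}}^\perp$ on $V_\|$. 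This gives
\[
J_\Psi(V_\|)=J_{\widetilde\Psi}(V_\|),\qquad J_\Psi(V_\perp^j)=\mathcal L_{g,\Psi}(V_\perp^j)\;\;(j=1,\ldots,n-k),
\]
and hence the quadratic form decomposes as
\[
Q_{\Psi,E}(V)=Q_{\widetilde\Psi,E}(V_\|)+\sum_{j=1}^{n-k}Q_{\Psi,S}(V_\perp^j).
\]

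Finally I would read off the index. Negative eigenspaces of a direct sum of self-adjoint operators are direct sums of the negative eigenspaces of the summands, so
\[
\ind_E(\Psi)=\ind_E(\widetilde\Psi)+(n-k)\,\ind_S(\Psi),
\]
using the tautological fact that $\mathcal L_{g,\Psi}$ depends only on $|\nabla\Psi|_g^2$ and so its spectral index is the same whether we view $\Psi$ as mapping into $\mathbb{S}^n$ or into $\mathbb{S}^k$. There is really no substantive obstacle here; the only thing that needs care is the bookkeeping for the normal projection $\pi_{\underline{\Psi}^\perp}$ on the two factors, which is exactly what makes the cross-terms between $V_\|$ and $V_\perp$ vanish and allows the clean decoupling.
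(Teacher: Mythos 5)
Your proposal is correct and follows essentially the same route as the paper's proof: split the domain as $\dom Q_{\Psi,E}\cong\dom Q_{\widetilde\Psi,E}\oplus C^\infty(M,\mathbb{R}^{n-k})$, observe that $J_\Psi$ respects this splitting and acts as $\mathcal L_{g,\Psi}$ componentwise on the second factor, and count negative eigenvalues. The only difference is that you spell out why $\pi_{\underline{\Psi}^\perp}$ acts as the identity on the $\mathbb{R}^{n-k}$ factor, a step the paper leaves implicit.
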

\begin{proof}
Up to an orthogonal transformation of $\mathbb{R}^{n+1}$ we can assume that the image of $\Psi$ lies in the subspace  defined by $x^{k+2}=\ldots=x^{n+1}=0$. Then the domain of $J_{g,\Psi}$ decomposes $\dom(J_{g,\Psi})=\dom(J_{g,\widetilde \Psi}) \oplus C^\infty(M,V^\perp)$, where the second summand corresponds to smooth $V^\perp$-valued vector-functions. Moreover, on $C^\infty(M,V^\perp)$ the operator $J_{g,\Psi}$ acts as $\mathcal L_{g,\Psi}$ applied to each component separately. Therefore, $\ind_E(\Psi) = \ind_E(\widetilde\Psi) + (n-k)\ind_S(\Psi)$.
\end{proof}
On the next step we apply Theorem~\ref{Ejiri:thm} to a linearly full map $\widetilde\Phi_{\infty}$. On each step we are decreasing the dimension $m$ and after finitely many steps we obtain a map $\Psi\colon\mathbb{S}^2\to \mathbb{S}^{2m}$ with the image in a two-dimensional subsphere such that $\ind_E(\Phi)\geqslant 2(m-1)\ind_S(\Psi) + \ind_E(\widetilde \Psi)\geqslant 2(m-1)\ind_S(\Psi)$ and $\ind_S(\Phi) = \ind_S(\Psi)$. 

To finish the proof, we recall that by Theorem~\ref{Ejiri:thm}, $\ind_S(\Psi) = \ind_S(\Phi)$. One obtains
$$
\ind_E(\Phi) \geqslant 2(m-1)\ind_S(\Psi) = 2(m-1)\ind_S(\Phi),
$$
which concludes the proof of Theorem~\ref{index:thm}.
\end{proof}

\subsection{Bound on spectral index}

In order to effectively use the bound of Theorem~\ref{index:thm} one needs to obtain a lower bound on $\ind_S(\Phi)$. It turns out that for our purposes, the bound of Corollary~\ref{Ejiri:cor} is not sufficient. In this section we modify the methods of Ejiri to show the following proposition.
\begin{proposition}
\label{indS:prop}
Let $\Phi\colon \mathbb{S}^2\to\mathbb{S}^{2m}$ be a harmonic map of degree $d$. Then 
\begin{equation}
\label{ind:eqtn}
\ind_S(\Phi)\geqslant 2d - \nul_S(\Phi)+2.
\end{equation}
\end{proposition}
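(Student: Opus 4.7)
Plan. I would prove Proposition~\ref{indS:prop} by induction on the target dimension $m$, adapting the skeleton of Ejiri's argument for Corollary~\ref{Ejiri:cor} but carefully tracking the spectral nullity throughout. The central tool is Theorem~\ref{Ejiri:thm}: the $SO(2m+1,\mathbb{C})$-deformation it provides preserves both $\ind_S$ and $\nul_S$ by part (2), and it also preserves the degree, since the energy $E(\Phi_t) \in 4\pi\mathbb{Z}$ varies continuously in $t$ (using the $C^\infty$-convergence noted in Remark~\ref{convergence:remark} together with the discreteness of $E$ for linearly full maps from Theorem~\ref{Barbosa:thm}) and must therefore be constant.

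For the inductive step, given a linearly full $\Phi\colon\mathbb{S}^2\to\mathbb{S}^{2m}$ of degree $d$ with $m\geq 2$, I would apply Theorem~\ref{Ejiri:thm} to obtain a limit $\Phi_\infty$ whose image lies in a subsphere $\mathbb{S}^{2(m-1)}\subset\mathbb{S}^{2m}$ and which shares the same $\ind_S$, $\nul_S$, and $d$ as $\Phi$. Since $\mathcal L_\Phi$ depends only on $|\nabla\Phi|^2_g$ and not on the codomain, restricting $\Phi_\infty$ to its linearly full factor $\widetilde\Phi_\infty\colon\mathbb{S}^2\to\mathbb{S}^{2k}$ with $k\leq m-1$ (using that the target of a linearly full harmonic sphere is even-dimensional by Theorem~\ref{Barbosa:thm}) preserves all three invariants. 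The induction hypothesis applied to $\widetilde\Phi_\infty$ then yields $\ind_S(\Phi)+\nul_S(\Phi)\geq 2d+2$.

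The base case $m=1$ is where the substantive analysis lies. Here $\widetilde\Phi\colon\mathbb{S}^2\to\mathbb{S}^2$ is a branched conformal cover of degree $d$ by Calabi's theorem, and the task is to show that the Laplacian $\Delta_{g_{\widetilde\Phi}}$ has at least $2d+2$ eigenvalues $\leq 2$. Four such eigenvalues arise immediately from the pullbacks of the constant and the three coordinate functions on the target sphere (with eigenvalues $0$ and $2$ respectively). The main obstacle is the extraction of the remaining $2d-2$ low eigenvalues from the branched-cover structure of $g_{\widetilde\Phi}$. The natural attempt, based on the $(4d+2)$-dimensional moduli space of degree-$d$ rational self-maps of $\mathbb{CP}^1$, would use Proposition~\ref{infinitesimal isometry} to quotient out the three trivial Jacobi fields of the form $A\widetilde\Phi$ with $A\in\mathfrak{so}(3)$ and inject the remaining non-trivial Jacobi fields into the non-positive spectrum of $\mathcal L_{\widetilde\Phi}$ via the identity $Q_E(V)=\sum_i Q_S(V^i)=0$; but such an injection yields only a bound of order $(4d+2)/(n+1)$, which is insufficient for $d\geq 2$. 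A sharper construction — leveraging for instance the complex line bundle structure $\widetilde\Phi^*T\mathbb{S}^2=\mathcal{O}(2d)$ on the source, or a direct spectral analysis of conical branched-cover metrics on $\mathbb{S}^2$ — is needed to genuinely strengthen Ejiri's base bound $\ind_S\geq d+1$ to the sharper inequality $\ind_S+\nul_S\geq 2d+2$. This is the technical heart where the modification over Ejiri's original argument must take place.
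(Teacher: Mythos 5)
Your reduction to the base case $m=1$ via repeated application of Theorem~\ref{Ejiri:thm} is exactly the paper's first step, and your observation that the degree is preserved along the deformation (since $E(\Phi_t)\in 4\pi\mathbb{Z}$ varies continuously) fills in a detail the paper leaves implicit. However, you correctly flag that you have not closed the base case, and this is a genuine gap: the proposal stops precisely at the substantive part.

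The paper resolves $m=1$ differently from all the routes you sketch. It does not extract low eigenvalues from the branched-cover geometry or push harder on the moduli-space dimension count; instead it imports a result of Montiel--Ros and Ejiri--Kotani (Proposition~\ref{indexS2:prop} in the paper): on the connected complex manifold $M_d$ of degree-$d$ holomorphic self-maps of $\mathbb{S}^2$, the locus $N_d=\{\Phi : \nul_S(\Phi)>3\}$ is a \emph{proper analytic subset}, and for every $\Phi\in M_d\setminus N_d$ one has exactly $\ind_S(\Phi)=2d-1$. This handles the generic case $\nul_S(\Phi)=3$ at once, with equality in~\eqref{ind:eqtn}. For $\Phi\in N_d$, one chooses an analytic arc $\Phi_t$ through $\Phi=\Phi_0$ that leaves $N_d$ for $t\neq 0$ (possible because $N_d$ is proper analytic in a connected complex manifold), and invokes upper-semicontinuity of $\ind_S+\nul_S$ (continuity of eigenvalues of $\mathcal{L}_{\Phi_t}$, cf.\ Remark~\ref{convergence:remark}) to obtain $\ind_S(\Phi)+\nul_S(\Phi)\geqslant \lim_{t\to 0}\bigl(\ind_S(\Phi_t)+\nul_S(\Phi_t)\bigr)=2d+2$. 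So the missing idea is not a refined spectral analysis of conical metrics but rather (i) the exact generic value $\ind_S=2d-1$ on $M_d\setminus N_d$ together with the analyticity of the nullity-jump locus, and (ii) the semicontinuity argument degenerating from the generic stratum. Without citing or reproving the Montiel--Ros/Ejiri--Kotani theorem, the base case remains open in your proposal.
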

\begin{proof}
Similarly to the proof of Theorem~\ref{index:thm}, one can apply Theorem~\ref{Ejiri:thm} repeatedly to obtain a harmonic map $\Phi$ with the same spectral index and spectral nullity, whose image lies in a two-dimensional subsphere. Therefore, it suffices to prove the proposition for $m=1$.

The case of $m=1$ was studied in detail for e.g. in~\cite{EKo,MR, Nayatani}. In that case $\Phi$ is either holomorphic or anti-holomorphic and harmonic degree coincides with the usual definition of degree for (anti-)holomorphic maps. Up to conjugation we assume that $\Phi$ is a holomorphic map of degree $d$. Let $M_d$ be the space of all such maps. It is easy to see that it is a connected complex manifold. 
\begin{proposition}[Montiel, Ros~\cite{MR}; Ejiri, Kotani~\cite{EKo}]
\label{indexS2:prop}
The set $N_d=\{\Phi\in M_d\,|\, \nul_S(\Phi)>3\}$ is a proper analytic subset of $M_d$. Moreover, for all $\Phi\in M_d\setminus N_d$ one has $\ind_S(\Phi) = 2d-1$.
\end{proposition} 

Proposition~\ref{indexS2:prop} proves the inequality~\eqref{ind:eqtn} when $\nul_S(\Phi)=3$. Assume $\nul_S(\Phi)>3$, then $\Phi\in N_d$. Since $N_d$ is a proper analytic subset, there exists an analytic deformation $\Phi_t\in M_d$ such that $\Phi_0=\Phi$ and $\Phi_t\not\in N_d$ for $t\ne 0$. Since the eigenvalues of a $\mathcal L_\Phi$ are continuous with respect to $\Phi$ (see Remark~\ref{convergence:remark}), the function $\ind(\Phi_t)+\nul(\Phi_t)$ is an upper-semicontinuous function of $t$. As a result, 
$$
\ind_S(\Phi)+\nul_S(\Phi)\geqslant\lim\limits_{t\to 0}\left(\ind_S(\Phi_t)+\nul_S(\Phi_t) \right)= 2d+2.
$$
\end{proof}
\begin{remark}
We remark that the bound~\eqref{ind:eqtn} is likely not optimal. In~\cite[p. 116]{Ejiri} Ejiri suggests that one should always have $\ind_S(\Phi) = 2d - \frac{1}{2}(\nul_S(\Phi)-1)$.
\end{remark}

\subsection{Bounds on nullities}
Both Theorem~\ref{indnul:thm} and Proposition~\ref{indS:prop} refer to $\nul_E$ and $\nul_S$ respectively. In this section we collect the information about these quantities necessary for our proof of Theorem~\ref{MainTheorem:ind}.
\begin{proposition}[Ejiri~\cite{Ejiri}, Kotani~\cite{Kotani}]
For any harmonic map $\Phi\colon\mathbb{S}^2\to\mathbb{S}^{2m}$ the spectral nullity $\nul_S(\Phi)$ is odd. 
\end{proposition}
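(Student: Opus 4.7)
My plan is to reduce the statement to the case $m=1$ via the $SO(2m+1,\mathbb{C})$-action, and then use the complex-analytic structure on the moduli space of holomorphic self-maps of $\mathbb{S}^2$ to establish oddness in the base case.

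First, I would note that the operator $\mathcal L_{g,\Phi}$ depends on $\Phi$ only through the energy density $|\nabla\Phi|_g^2$, so $\nul_S(\Phi)$ is unaffected by enlarging the target sphere. In particular, if $\Phi$ is not linearly full, I may replace it by its linearly full factorization through an equatorial subsphere without changing $\nul_S$, so I assume henceforth that $\Phi$ is linearly full. Applying Theorem~\ref{Ejiri:thm} then produces a deformation $\Phi_t = A_t\Phi$ with constant $\nul_S$ converging to a harmonic map $\Phi_\infty$ whose image lies in $\mathbb{S}^{2(m-1)}\subset\mathbb{S}^{2m}$. Since $\nul_S(\Phi)=\nul_S(\Phi_\infty)$ coincides with the spectral nullity of the associated linearly full map $\widetilde\Phi_\infty\colon\mathbb{S}^2\to\mathbb{S}^{2(m-1)}$, iterating this procedure $(m-1)$ times reduces the problem to a linearly full harmonic map $\widetilde\Phi\colon\mathbb{S}^2\to\mathbb{S}^2$ with the same spectral nullity.

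For the base case, $\widetilde\Phi$ is, up to orientation reversal, a holomorphic map of some degree $d$, so $\widetilde\Phi\in M_d$. By Proposition~\ref{indexS2:prop}, for $\widetilde\Phi$ in the Zariski-open set $M_d\setminus N_d$ one has $\nul_S(\widetilde\Phi)=3$, which is odd. For the non-generic case $\widetilde\Phi\in N_d$, I would identify the quotient $\ker\mathcal L_{\widetilde\Phi}/\mathrm{span}(\widetilde\Phi^1,\widetilde\Phi^2,\widetilde\Phi^3)$ with a subspace of the tangent space $T_{\widetilde\Phi} N_d$ modulo infinitesimal isometries of the target, via the standard correspondence between scalar kernel elements and first-order deformations within the moduli. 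Since $N_d$ is a complex analytic subvariety of the complex manifold $M_d$, and the quotient by $\mathfrak{so}(3,\mathbb{R})$ can be promoted to a quotient by the complexified isometry group $\mathfrak{so}(3,\mathbb{C})$, this subspace inherits a complex structure from $M_d$. Hence its real dimension is even, and the total $\nul_S(\widetilde\Phi)$ is odd.

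The main technical obstacle is making the identification in the previous paragraph rigorous: one must produce a natural complex structure $J$ on the non-trivial part $\ker\mathcal L_{\widetilde\Phi}/\mathrm{span}(\widetilde\Phi^1,\widetilde\Phi^2,\widetilde\Phi^3)$, check that it descends correctly through the $\mathfrak{so}(3,\mathbb{R})$-quotient, and verify that no excess kernel elements lie outside the span of complex tangent vectors to $M_d$. This likely requires an infinitesimal version of the Weierstrass/Hopf-differential parameterization of holomorphic maps, together with the analog of Proposition~\ref{infinitesimal isometry} specialized to $m=1$ to rule out non-integrable Jacobi fields for the scalar operator $\mathcal L$.
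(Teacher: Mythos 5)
The paper states this proposition with citations to Ejiri and Kotani and offers no proof of its own, so there is no in-paper argument to compare against; I can only assess whether your proposed argument is sound on its own terms.

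Your reduction to $m=1$ is fine: the operator $\mathcal L_{g,\Phi}$ depends on $\Phi$ only through $|\nabla\Phi|_g^2$, so passing to the linearly full factorization preserves $\nul_S$, and Theorem~\ref{Ejiri:thm} then lets you shed two target dimensions at a time without changing $\nul_S$. Iterating does indeed reduce the statement to a linearly full (anti)holomorphic map $\widetilde\Phi\colon\mathbb{S}^2\to\mathbb{S}^2$, and for generic such $\widetilde\Phi$ Proposition~\ref{indexS2:prop} gives $\nul_S=3$. Up to here the argument is correct, and this is the same reduction the paper itself uses in the proof of Proposition~\ref{indS:prop}.

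The gap is in the base case, specifically in the sentence beginning ``I would identify the quotient $\ker\mathcal L_{\widetilde\Phi}/\mathrm{span}(\widetilde\Phi^1,\widetilde\Phi^2,\widetilde\Phi^3)$ with a subspace of the tangent space $T_{\widetilde\Phi}N_d$ modulo infinitesimal isometries.'' There is no ``standard correspondence between scalar kernel elements and first-order deformations within the moduli.'' Elements of $\ker\mathcal L_{\widetilde\Phi}$ are scalar functions; the objects that naturally parametrize first-order deformations of $\widetilde\Phi$ inside $M_d$ are $\Phi^*T\mathbb{S}^2$-valued Jacobi fields, i.e.\ elements of $\ker J_{\widetilde\Phi}$, and these two kernels are genuinely different objects. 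What an \emph{extra} scalar eigenfunction actually encodes, per Kotani (cf.\ the sketch given for Proposition~\ref{nulS:prop}), is the ability to deform $\widetilde\Phi$ into a harmonic map into a \emph{larger} sphere $\mathbb{S}^{2m+2\nu}$ of the same degree; the complex structure that pairs the extra eigenfunctions comes from the holomorphic data of that higher-dimensional construction (the directrix/twistor lift), not from the tangent cone to the stratum $N_d$ inside $M_d$. Moreover, since $\nul_S$ jumps precisely on $N_d$, there is no reason the extra kernel should inject into $T_{\widetilde\Phi}N_d$: the extra kernel dimension is an obstruction-theoretic quantity, not a tangent-space dimension, and $N_d$ need not be smooth at $\widetilde\Phi$. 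The further step of ``promoting the quotient by $\mathfrak{so}(3,\mathbb{R})$ to a quotient by $\mathfrak{so}(3,\mathbb{C})$'' also changes the real dimension of what is being quotiented by (from $3$ to $6$), and would need careful justification that it does not affect the parity count. To close the gap you would need to replace the proposed identification by the genuine one: associate to each extra eigenfunction a pair of additional directrix components (holomorphic data), giving the even-dimensional complex structure directly.
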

\begin{proposition}[Kotani~\cite{Kotani}]
\label{nulS:prop}
Let $\Phi\colon \mathbb{S}^2\to\mathbb{S}^{2m}$ be a harmonic map of degree $d$. Then one has
\begin{equation}
\label{Kotani:ineq}
d\geqslant \frac{\nul_S(\Phi)^2-1}{8}.
\end{equation}
Equivalently, using notations of Theorem~\ref{MainTheorem:indintro},
$$
\nul_S(\Phi)\leqslant [\sqrt{8d+1}]_{\mathrm{odd}}.
$$
\end{proposition}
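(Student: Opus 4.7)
I plan to derive the inequality from Barbosa's lower bound on the harmonic degree (Theorem~\ref{Barbosa:thm}). First I would reduce to the linearly full case: if $\Phi$ is not linearly full, view it as a linearly full harmonic map $\Phi\colon \mathbb{S}^2 \to \mathbb{S}^{2m'}$ for some $m' \leqslant m$; the degree and the kernel $V := \ker \mathcal{L}_{g_\Phi, \Phi}$, and therefore $\nul_S(\Phi)$, are unchanged. Assume henceforth $\Phi$ is linearly full. Since $\nu := \nul_S(\Phi)$ is odd by the previous proposition, write $\nu = 2\ell + 1$; the claim becomes $d \geqslant \ell(\ell+1)/2$, which is precisely Barbosa's lower bound for the degree of any linearly full harmonic map $\mathbb{S}^2 \to \mathbb{S}^{2\ell}$. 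The strategy is therefore to produce, from the $(2\ell+1)$-dimensional space $V$, a linearly full harmonic map $\hat\Phi\colon \mathbb{S}^2 \to \mathbb{S}^{2\ell}$ of degree $d$, and then invoke Barbosa.

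To set up the construction, note that $V$ contains the components $\Phi^1, \ldots, \Phi^{2m+1}$ of $\Phi$ by~\eqref{harmonic:equation}, and these are linearly independent by linear fullness; hence $\ell \geqslant m$, with equality giving the statement for $\Phi$ itself via Barbosa. Assume $\ell > m$ and complete the $\Phi^i$ to a basis of $V$ by eigenfunctions $f_{2m+2}, \ldots, f_{2\ell+1}$. Append zero coordinates to $\Phi$ to view it as a (non-linearly-full) harmonic map $\tilde\Phi\colon \mathbb{S}^2 \to \mathbb{S}^{2\ell} \subset \mathbb{R}^{2\ell+1}$. For each $j > 2m+1$, letting $e_j$ denote the standard basis vector corresponding to the extra coordinate, the section $f_j e_j$ of $\tilde\Phi^* T\mathbb{S}^{2\ell}$ satisfies $\Delta_{g_\Phi}(f_j e_j) = 2 f_j e_j = |\nabla \tilde\Phi|^2_{g_\Phi}(f_j e_j)$ and is automatically orthogonal to $\tilde\Phi$, so it is a Jacobi field of $\tilde\Phi$. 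The plan is to integrate these Jacobi fields into a family of harmonic maps $\hat\Phi_t$ with $\hat\Phi_0 = \tilde\Phi$, converging for small generic $t > 0$ to a linearly full harmonic map $\hat\Phi\colon \mathbb{S}^2 \to \mathbb{S}^{2\ell}$. Because the energy of a harmonic sphere is $4\pi\deg$ and depends continuously on the map, $\deg(\hat\Phi) = \deg(\tilde\Phi) = d$, and Barbosa's bound applied to the linearly full $\hat\Phi$ yields $d \geqslant \ell(\ell+1)/2 = (\nu^2-1)/8$.

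The main obstacle is the integrability step: one must show that the Jacobi fields $f_j e_j$ can be realized as tangents to a smooth family of harmonic maps into $\mathbb{S}^{2\ell}$ of degree $d$, with the family genuinely escaping every proper equatorial subsphere. I would approach this via the twistor correspondence used elsewhere in the paper. Lifting $\tilde\Phi$ to a horizontal holomorphic curve in the twistor space over $\mathbb{S}^{2\ell}$ and invoking the first-order twistor correspondence of Section~\ref{1twistor:sec}, the Jacobi fields $f_j e_j$ lift to horizontal holomorphic twistor fields; holomorphic integrability is automatic, and the resulting one-parameter deformation of holomorphic curves projects back to the desired family of harmonic maps. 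Linear fullness at generic $t$ follows from the linear independence of the initial tangent directions $f_j e_j$ together with the nonvanishing of the $f_j$, and the degree is preserved because it is a topological invariant of the holomorphic curve. I expect the honest technical work to live entirely in verifying this holomorphic lift and its horizontality, after which Barbosa closes the argument.
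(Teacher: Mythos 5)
Your outline matches the paper's sketch: write $\nul_S(\Phi)=2\ell+1$, deform $\Phi$ within $\mathbb{S}^{2\ell}$ to a linearly full harmonic map of the same degree, then read off $d\geq \ell(\ell+1)/2$ from Theorem~\ref{Barbosa:thm}. However, the paper does not reprove the deformation step; it attributes it wholesale to Kotani~\cite{Kotani}. Your attempt to supply that step is where the gap lies.

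The decisive problem is the assertion that ``holomorphic integrability is automatic.'' It is not. A twistor field must satisfy two conditions: holomorphicity \emph{and} the first-order horizontality condition (item 2 of the definition in Section~\ref{1twistor:sec}). Integrating a twistor field means producing a one-parameter family of \emph{horizontal} holomorphic curves in $\mathcal Z_\ell$; preserving horizontality is exactly where obstructions live. The paper's Remark~\ref{Jacobi:remark} and the cited result of Lemaire--Wood~\cite{LW} --- that Jacobi fields along harmonic $2$-spheres in $\mathbb S^3$ and $\mathbb S^4$ are not all integrable --- exist precisely to warn against this shortcut. To make your argument work you would have to show why the particular Jacobi fields $f_je_j$ \emph{are} integrable, which is the actual content of Kotani's theorem and is not elementary.

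There is also a structural mismatch: the first-order twistor correspondence of Section~\ref{1twistor:sec} is constructed only for linearly full harmonic maps $\mathbb S^2\to\mathbb S^{2m}$, via the harmonic sequence $\{L_i\}$ which in the linearly full case spans $\mathbb S^2\times\mathbb C^{2m+1}$. Your $\tilde\Phi$ is not linearly full in $\mathbb S^{2\ell}$, so its harmonic sequence only fills the subspace $\mathbb C^{2m+1}\subset\mathbb C^{2\ell+1}$; neither the twistor lift $\Psi$ into $\mathcal Z_\ell$ (as defined via $L_{<0}$) nor the map $\mathcal T$ to twistor fields is directly available. You would need to supplement $L_{<0}$ by a constant isotropic complement and redo the constructions in this degenerate setting, at which point you are effectively rebuilding Kotani's proof --- which is why the paper simply cites it.
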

\begin{proof}[Idea of the proof]
Without loss of generality $\Phi$ is linearly full.
Recall that $\nul_S(\Phi)\geqslant 2m+1$. In~\cite{Kotani} Kotani proves that if $\nul_S(\Phi)= 2m+2\nu+1$, then $\Phi$ can deformed to a linearly full map $\Psi\colon\mathbb{S}^2\to\mathbb{S}^{2m+2\nu}$ of the same degree. Thus, the inequality~\eqref{Kotani:ineq} follows from Theorem~\ref{Barbosa:thm}.
\end{proof}

\begin{theorem}[Fernandez~\cite{Fernandez}, Kotani~\cite{Kotani}]
\label{nulE:thm}
Let $\Phi\colon\mathbb{S}^2\to\mathbb{S}^{2m}$ be a linearly full harmonic map of degree $d$. Then one has
$$
\nul_E(\Phi)\geqslant 4d+2m^2.
$$
\end{theorem}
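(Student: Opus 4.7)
My plan is to convert the real-analytic problem of counting Jacobi fields into a complex-analytic dimension count on the twistor side, where Riemann--Roch on $\mathbb{CP}^1$ applies. The starting point is the first-order twistor correspondence to be established in Section~\ref{1twistor:sec}: every Jacobi field $V$ along $\Phi$ lifts canonically to a holomorphic section of a naturally defined holomorphic vector bundle $\mathcal{E}_{\hat\Phi}$ on $\mathbb{S}^2 = \mathbb{CP}^1$, where $\hat\Phi\colon \mathbb{S}^2 \to Z_m$ is the twistor lift of $\Phi$ into the twistor space $Z_m = SO(2m+1)/U(m)$ of $\mathbb{S}^{2m}$. The bundle $\mathcal{E}_{\hat\Phi}$ sits inside $\hat\Phi^*T^{1,0}Z_m$ as the subbundle compatible with the horizontal distribution, and the correspondence $V \mapsto \hat V$ is a real-linear isomorphism onto its image.

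Next I would invoke Grothendieck's theorem to split $\mathcal{E}_{\hat\Phi} = \bigoplus_i \mathcal{O}(a_i)$ on $\mathbb{CP}^1$ and compute the twisting numbers $a_i$ from the harmonic sequence $\Phi_0 = \Phi, \Phi_{\pm 1}, \ldots, \Phi_{\pm m}$ canonically associated to $\Phi$. This harmonic sequence decomposes $\hat\Phi^*TZ_m$ into line subbundles indexed by the positive roots of $\mathfrak{so}(2m+1,\mathbb{C})$, and the degree of each subbundle is an explicit linear function of the ramification orders of the consecutive maps $\Phi_i$. Summing these Pl\"ucker-type identities and restricting to the horizontal part should yield
\[
\deg(\mathcal{E}_{\hat\Phi}) + \mathrm{rank}(\mathcal{E}_{\hat\Phi}) = 2d + m^2.
\]
Provided each $a_i \geqslant -1$, one has $\dim_{\mathbb{C}} H^0(\mathbb{CP}^1, \mathcal{E}_{\hat\Phi}) \geqslant 2d + m^2$; combined with the natural real structure on $\mathcal{E}_{\hat\Phi}$ induced by the constraint $V(x) \perp \Phi(x) \in \mathbb{R}^{2m+1}$, this produces a real-linear injection of a $(4d + 2m^2)$-dimensional space into $\ker J_\Phi$, which is exactly the bound claimed.

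The most technical step will be the Chern class computation: identifying the horizontal distribution inside $TZ_m$ in terms of the root decomposition, tracking how each root-space line bundle pulls back under $\hat\Phi$, and relating the resulting total degree to the harmonic degree $d$ through the harmonic sequence. A secondary difficulty is verifying $a_i \geqslant -1$ for every summand, so that no sections are lost when applying Riemann--Roch summand by summand; this should follow from the holomorphicity of the twistor lift together with standard unramifiedness properties of harmonic spheres, but needs careful bookkeeping. A useful sanity check at the end is the Veronese case $d = m(m+1)/2$, in which the bound specializes to $4d + 2m^2 = 4m^2 + 2m = \dim_{\mathbb{R}} \mathfrak{so}(2m+1,\mathbb{C})$ and should be accounted for exactly by the infinitesimal $SO(2m+1,\mathbb{C})$ action discussed in Section~\ref{soaction:sec}, confirming that the degree computation has been done correctly.
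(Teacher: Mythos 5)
Your proposal takes a genuinely different and substantially more ambitious route than the paper. The paper's proof is a short soft argument: it cites Fernandez and Kotani for the fact that the moduli space of linearly full degree-$d$ harmonic maps $\mathbb{S}^2\to\mathbb{S}^{2m}$ is a complex algebraic variety of pure complex dimension at least $2d+m^2$, then observes that the tangent cone at each point is a cone of real dimension at least $4d+2m^2$ consisting of \emph{integrable} Jacobi fields (Remark~\ref{Jacobi:remark}), and that the linear span of this cone is therefore a subspace of $\ker J_\Phi$ of the required dimension. No twistor lifting of individual Jacobi fields, no Riemann--Roch, and in particular no claim that Jacobi fields are holomorphic sections of anything. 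You, by contrast, are attempting to recompute the dimension from scratch on the twistor side, which amounts to reproving the Fernandez--Kotani dimension count rather than using it as a black box.

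The central gap in your proposal is that twistor fields are \emph{not} the holomorphic sections of a subbundle of $\hat\Phi^*T^{(1,0)}\mathcal Z_m$. Look at the actual definition established in Section~\ref{1twistor:sec}: a twistor field is a holomorphic section $l$ of $\hom^s(L_{<0},L_{\geqslant 0})\cong\Psi^*T^{(1,0)}\mathcal Z_m$ that additionally satisfies the first-order differential constraint $(\nabla_z l)^V=-l^*\circ\partial_{z,-1}\circ\pi_{-1}$. This is a relation between the holomorphic derivative of $l$ and the value of $l$, not a pointwise linear condition, so it does not cut out a subbundle $\mathcal E_{\hat\Phi}$ and you cannot compute the dimension as $h^0$ of a direct sum of $\mathcal O(a_i)$'s. (Sections valued in the horizontal subbundle $\hat\Phi^*\mathcal{HZ}_m\cong\hom(L_{<0},L_0)$ are also not the right object: twistor fields have, in general, a nonzero vertical part.) To make a Riemann--Roch count work you would have to encode the constraint into a modified $\bar\partial$-operator or an auxiliary extension bundle and run an index argument, which is essentially the content of Fernandez's paper. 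Beyond this, the degree identity $\deg+\operatorname{rank}=2d+m^2$ and the claim $a_i\geqslant -1$ are both asserted without justification, and the latter is genuinely delicate because the ramification of the harmonic sequence varies with $\Phi$; if some $a_i<-1$ you lose sections and the bound fails. The paper's argument sidesteps all of this because the integrability of Jacobi fields coming from analytic arcs in the moduli space (Remark~\ref{Jacobi:remark}) makes the tangent-cone count immediate once the variety's dimension is known.
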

\begin{proof}
In both papers~\cite{Fernandez,Kotani}, the authors show that the space of linearly full harmonic maps $\mathbb{S}^2\to\mathbb{S}^{2m}$ of degree $d$ is a complex algebraic variety of pure complex dimension at least $2d+m^2$. Therefore, at each point the tangent cone to the moduli space is of real dimension at least $4d+2m^2$. Since tangent cone is a set of equivalence classes of analytic arcs, by Remark~\ref{Jacobi:remark}, the real dimension of the cone of {\em integrable} Jacobi fields is at least $4d+2m^2$. Thus, the linear span of this cone forms a subspace in a space of Jacobi fields of dimension at least $4d+2m^2$.
\end{proof}
\begin{remark}
We remark that Fernandez~\cite{Fernandez} proved that the complex dimension is exactly $2d+m^2$. It does not imply, however, that one has equality in Theorem~\ref{nulE:thm}.
\end{remark}

\subsection{Bound on energy index II} 
\label{indEII:sec}
To prove Theorem~\ref{MainTheorem:indintro} one combines Theorem~\ref{index:thm} with Propositions~\ref{indS:prop} and~\ref{nulS:prop} to arrive exactly at 
$$
\ind_E(\Phi)\geqslant 2(m-1)(2d-[\sqrt{8d+1}]_{\mathrm{odd}}+2).
$$

Finally, we state a slightly more general version of Proposition~\ref{RP2:thmintro}, whose proof we once again postpone until Section~\ref{Jacobi:sec}.
\begin{theorem}
\label{RP2index:thm}
Let $\Phi\colon \mathbb{RP}^2\to\mathbb{S}^{2m}$ be a linearly full harmonic map and let $\widetilde\Phi\colon\mathbb{RP}^2\to\mathbb{S}^{2m}$ be a lift of $\Phi$ via the antipodal projection. Then
$$
\ind_E(\Phi) = \frac{1}{2}\ind_E(\widetilde\Phi),\qquad \nul_E(\Phi) = \frac{1}{2}\nul_E(\widetilde\Phi).
$$
\end{theorem}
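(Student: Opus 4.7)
The antipodal map $\sigma\colon\mathbb{S}^2\to\mathbb{S}^2$ is an isometry satisfying $\widetilde\Phi\circ\sigma=\widetilde\Phi$, so pull-back $\tau V:=V\circ\sigma$ defines an involution on $\dom Q_{\widetilde\Phi,E}$ commuting with the Jacobi operator $J_{\widetilde\Phi}$ and preserving the form $Q_{\widetilde\Phi,E}$. Writing $\Gamma=\Gamma^+\oplus\Gamma^-$ for the $\pm 1$ eigenspace splitting of $\tau$, each summand is stable under $J_{\widetilde\Phi}$, and the antipodal pull-back gives a bijection between $\dom Q_{\Phi,E}$ and $\Gamma^+$ which intertwines $J_\Phi$ with $J_{\widetilde\Phi}|_{\Gamma^+}$. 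Hence
$$
\ind_E(\Phi)=\ind\!\bigl(J_{\widetilde\Phi}|_{\Gamma^+}\bigr),\qquad \nul_E(\Phi)=\nul\!\bigl(J_{\widetilde\Phi}|_{\Gamma^+}\bigr),
$$
and since $\ind_E(\widetilde\Phi)$ and $\nul_E(\widetilde\Phi)$ split as the sum of the corresponding quantities on $\Gamma^+$ and $\Gamma^-$, the theorem reduces to the claim that the index and nullity of $J_{\widetilde\Phi}$ agree on $\Gamma^+$ and $\Gamma^-$.

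To produce an isomorphism $\Gamma^+\to\Gamma^-$ preserving the relevant spectral data, the plan is to invoke the first-order twistor correspondence of Section~\ref{1twistor:sec}: Jacobi fields along $\widetilde\Phi$ are in bijection with twistor fields along the twistor lift $\widetilde\psi\colon \mathbb{S}^2\to Z$ to the twistor space $Z$ of $\mathbb{S}^{2m}$. The almost complex structure $J_Z$ on $Z$ endows the space of twistor fields with a complex structure, and the compatibility statements underlying the first-order correspondence show that, under the identification with Jacobi fields, $J_Z$ acts as an $\mathbb{R}$-linear endomorphism commuting with $J_{\widetilde\Phi}$ and preserving $Q_{\widetilde\Phi,E}$. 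On the other hand, the antipodal map $\sigma$ lifts canonically to an anti-holomorphic involution $\widehat\sigma$ of $Z$ — reversing the natural orientation of each twistor fibre — so the induced involution on twistor fields is $\mathbb{C}$-antilinear. Consequently $J_Z$ anticommutes with $\tau$, and therefore gives the desired $J_{\widetilde\Phi}$-equivariant, $Q_{\widetilde\Phi,E}$-preserving $\mathbb{R}$-linear isomorphism $\Gamma^+\to\Gamma^-$, establishing the required dimension equalities.

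The core difficulty is not the counting argument above but the two compatibility inputs on which it rests: that the almost complex structure $J_Z$, transported through the first-order twistor correspondence, commutes with $J_{\widetilde\Phi}$ and preserves $Q_{\widetilde\Phi,E}$; and that the antipodal lift $\widehat\sigma$ genuinely acts anti-holomorphically on twistor fields. The former is a consequence of the horizontality/holomorphicity conditions defining twistor fields in Section~\ref{1twistor:sec}, while the latter is read off from the standard twistor description of $\mathbb{RP}^2$-valued harmonic maps obtained by descending from $\mathbb{S}^2$. With these two ingredients in place, the identities $\ind_E(\Phi)=\tfrac12\ind_E(\widetilde\Phi)$ and $\nul_E(\Phi)=\tfrac12\nul_E(\widetilde\Phi)$ follow by summing over the $\pm$ eigenspaces of $\tau$.
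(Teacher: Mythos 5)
Your overall strategy is the same as the paper's: split $\dom Q_{\widetilde\Phi,E}$ into $\sigma$-even and $\sigma$-odd parts, identify the spectrum of $J_\Phi$ with that of $J_{\widetilde\Phi}$ on the even part, and then produce a $J_{\widetilde\Phi}$-equivariant real-linear operator that swaps even and odd fields. But there is a genuine gap in how you propose to construct the swapping operator.

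You invoke the first-order twistor correspondence of Section~\ref{1twistor:sec} and transport the complex structure $J_Z$ from twistor fields to Jacobi fields. That correspondence, however, is a bijection only between \emph{Jacobi fields} (the kernel of $J_{\widetilde\Phi}$) and twistor fields. It does not give you a complex structure on the negative eigenspaces of $J_{\widetilde\Phi}$, nor on $\dom Q_{\widetilde\Phi,E}$ as a whole. Consequently, the operator you obtain only furnishes an isomorphism between $\Gamma^+\cap\ker J_{\widetilde\Phi}$ and $\Gamma^-\cap\ker J_{\widetilde\Phi}$, which establishes $\nul_E(\Phi)=\tfrac12\nul_E(\widetilde\Phi)$ but leaves the index identity unproven. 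Saying that the transported $J_Z$ ``commutes with $J_{\widetilde\Phi}$ and preserves $Q_{\widetilde\Phi,E}$'' is vacuous on the kernel, where both vanish, and undefined off the kernel, where $J_Z$ does not yet act.

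The paper sidesteps this by constructing the needed operator directly on all of $\dom Q_{\Phi,E}$, without passing through twistor fields. Any $V\perp L_0$ decomposes as $V=V_++V_-$ with $V_\pm$ in $L_{>0}$, $L_{<0}$, and one sets $V^*=2\Im V_+$. Lemma~\ref{Jacobi:lemma} shows that $L_{>0}$ and $L_{<0}$ are $J_g$-invariant, from which Lemma~\ref{conjugate:lemma} gives $(J_gV)^*=J_g(V^*)$ on \emph{every} eigenspace, not just the kernel; and since $\sigma^*$ interchanges $L_{<0}^\Phi$ and $L_{>0}^\Phi$ (Remark~\ref{orientation:remark}), the conjugation $(\cdot)^*$ sends $\sigma$-even fields to $\sigma$-odd ones. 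This is essentially the complex structure you are after — indeed $V^*=-i(V_+-V_-)$ is multiplication by $\mp i$ on $L_{>0}\oplus L_{<0}$ — but defined at the level of $\mathbb{C}^{2m+1}$-valued sections via the harmonic sequence rather than via twistor fields, which is what makes the index computation go through. To repair your proposal, replace the appeal to the twistor correspondence by this direct construction, and everything else you wrote carries over.
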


\subsection{Proof of Theorem~\ref{MainTheorem:ind}}
\label{proof:sec}
Without loss of generality, assume that $\Phi\colon\mathbb{S}^2\to\mathbb{S}^{2m}$ is linearly full.
By Theorem~\ref{index:thm}, Proposition~\ref{indS:prop}, Theorem~\ref{nulE:thm} and Theorem~\ref{indnul:thm} one has 
\begin{equation}
\label{final:eq1}
\ind_S(\Phi)\geqslant \frac{(2d-\nul_S(\Phi)+2)(m-1)+2d-m-m^2}{2m+1}.
\end{equation}
At the same time, $\nul_S(\Phi)\geqslant 2m+1$ and $8d\geqslant \nul_S(\Phi)^2-1$ by Proposition~\ref{nulS:prop}.
We claim that the combination of these inequalities with Theorem~\ref{EjiriRP:thm} yields Theorem~\ref{MainTheorem:ind} unless $(m,d) \in\{ (2,3),(2,4), (4,10)\}$. The statement is purely computational.
\begin{proof}
Assume that the assertion of Theorem~\ref{MainTheorem:ind} fails, i.e. $\frac{d-1}{2}\geqslant \ind_S(\Phi)$. Combining it with~\eqref{final:eq1} one obtains
$$
\frac{d-1}{2}\geqslant \frac{(2d-\nul_S(\Phi)+2)(m-1)+2d-m-m^2}{2m+1}.
$$
After elementary transformations, the inequality becomes
\begin{equation}
\label{final:eq2}
(2m-2)\nul_S(\Phi) + 2m^2+3-4m\geqslant (2m-1)d.
\end{equation}
It follows that 
$$
(2m-2)\nul_S(\Phi) + m(2m-2)\geqslant (2m-1)d +2m-3\geqslant (2m-2)d,
$$
where we used that for maps from $\mathbb{RP}^2$ one has $m\geqslant 2$.
Cancelling the factor $(2m-2)$ yields
$$
\nul_S(\Phi) + m\geqslant d.
$$
Combining with~\eqref{Kotani:ineq} one obtains
\begin{equation}
\label{final:eq3}
\nul_S(\Phi) + m\geqslant \frac{\nul_S(\Phi)^2-1}{8}.
\end{equation}
Solving this quadratic inequality yields
$$
\nul_S(\Phi)\leqslant \frac{8+\sqrt{64+4(8m+1)}}{2}\leqslant 8+\sqrt{8m+1},
$$ 
where we used the inequality $\sqrt{a+b}\leqslant \sqrt a + \sqrt b$. Combining this with~\eqref{final:eq3} one obtains,
$$
8+\sqrt{8m+1}\geqslant \nul_S(\Phi)\geqslant d-m\geqslant \frac{m(m-1)}{2},
$$
where we are using $d\geqslant \frac{m(m+1)}{2}$. Taking into account $m\geqslant 2$ is an integer, the last inequality holds only for $m\leqslant 6$. Then by Theorem~\ref{EjiriRP:thm},  $m=2,4,6$. For these values of $m$ we work directly with inequality~\eqref{final:eq2}. 

For $m=2$, combining~\eqref{final:eq2} with inequality~\eqref{Kotani:ineq} yields
\begin{equation}
\label{final:eq4}
2\nul_S(\Phi)+3\geqslant 3d\geqslant \frac{3(\nul_S(\Phi)^2-1)}{8},
\end{equation}
which yields $\nul_S(\Phi)\leqslant 6$. Since $\nul_S(\Phi)$ is odd and $\nul_S(\Phi)\geqslant 2m+1=5$, one has $\nul_S(\Phi)=5$. Then~\eqref{final:eq4} yields $13\geqslant 3d\geqslant 9$, i.e. $d=3,4$.

By similar arguments for $m=4$ one has
 \begin{equation*}
6\nul_S(\Phi)+19\geqslant 7d\geqslant \frac{7(\nul_S(\Phi)^2-1)}{8},
\end{equation*}
which similarly implies $\nul_S(\Phi)=9$. Then $73\geqslant 7d\geqslant 70$, i.e. $d=10$.

For $m=6$ one obtains 
$$
10\nul_S(\Phi) + 51\geqslant 11d\geqslant \frac{11(\nul_S(\Phi)^2-1)}{8},
$$
which yields $\nul_S(\Phi)\leqslant 10$ and contradicts $\nul_S(\Phi)\geqslant 2m+1=11$.
\end{proof}
Finally, let us deal with the exceptional cases.
\begin{itemize}
\item[1)] Assume $d=3$. By Theorem~\ref{EjiriRP:thm}, $\Phi$ is a Veronese immersion. Then Theorem~\ref{MainTheorem:ind} claims $\ind_S(\Phi)=1$. This is shown in Example~\ref{Veronese2}, see also~\cite{LiYau}.
\item[2)]
Assume $d=4$. Then Theorem~\ref{MainTheorem:ind} claims $\ind_S(\Phi)>1$. By the result of Li and Yau~\cite{LiYau}, the only map of spectral index $1$ has degree $3$, therefore for a map of degree $4$, one has $\ind_S(\Phi)>1$.
\item[3)]
Assume $m=4$ and $d=10$. Then $d=\frac{m(m+1)}{2}$ and by Theorem~\ref{EjiriRP:thm}, up to an isometry $\Phi$ is a Veronese immersion. In this case the theorem follows from Example~\ref{Veronese3}, where we checked Theorem~\ref{MainTheorem:ind} for Veronese immersions.
\end{itemize}



\section{Twistor correspondence}
\label{twistor:sec}
In this section we first recall the twistor correspondence~\cite{Calabi,Barbosa} with an emphasis on the notion of harmonic sequence~\cite{CW1, CW2}. For the purposes of our exposition, one should think of the harmonic sequence as a convenient setup for coordinate computations. Theorems~\ref{RP2index:thm} and~\ref{indnul:thm} are proved in Sections~\ref{Jacobi:sec} and~\ref{jacobi:sec} respectively. The first order twistor correspondence is established in Section~\ref{1twistor:sec}. In Section~\ref{soaction:sec} we recall the action of $SO(2m+1,\mathbb{C})$ and use the first order twistor correspondence to prove Proposition~\ref{invariance:prop}.

\subsection{Harmonic sequence} 
In the present section we discuss the concept of {\em harmonic sequence} introduced by Chern and Wolfson in~\cite{CW1,CW2}, see also~\cite{EW}. Below, we follow the presentation in~\cite{BJRW}. Recall that the property of being harmonic depends only on the conformal class $[g]$ of the metric on a Riemannian surface $M$. In order to define a harmonic sequence one requires a complex structure on $M$, therefore, we require $M$ to be orientable and fix a choice of orientation on $M$. A surface $M$ with a conformal class and an orientation can be endowed with a canonical complex structure, making $M$ a Riemann surface. We will discuss the dependence on orientation in Remark~\ref{orientation:remark} below.

Let $L\subset\mathbb{CP}^n\times\mathbb{C}^{n+1}$ be a tautological bundle over $\mathbb{CP}^n$, i.e $L = \{(l,v)\,|v\in l\}$. Let $M$ be a Riemann surface. There is a correspondence between smooth maps $\psi\colon M\to\mathbb{CP}^n$ and line subbundles of a trivial bundle $M\times \mathbb{C}^{n+1}\to M$ given by $\psi\leftrightarrow\psi^*L$. We endow $M\times\mathbb{C}^{n+1}$ with the usual Hermitian inner product $\langle\cdot,\cdot\rangle$ and the induced Hermitian connection.
Moreover, one has the holomorphic bundle isomorphism $T^{(1,0)}\mathbb{CP}^n\cong \hom(L,L^\perp)$. We endow all these bundles with Hermitian connections induced from the Hermitian connection on the trivial $\mathbb{C}$-bundle. Then by Koszul-Malgrange theorem all line subbundles of $M\times\mathbb{C}^{n+1}$ are automatically holomorphic. 
The composition of the complexified differential of $\psi$ with the projection onto $T^{(1,0)}\mathbb{CP}^n$ yields the map
$d^{\mathbb{C}}\psi\in\hom(T^{\mathbb{C}}M\otimes\psi^*L,\psi^*L^\perp)$. Taking the $(1,0)$-part of $d^\mathbb{C}\psi$ defines 
\begin{equation}
\label{partial}
\partial\colon T^{(1,0)}M\otimes\psi^*L\to\psi^*L^\perp.
\end{equation}
Similarly, the $(0,1)$-part of $d^\mathbb{C}\psi$ defines 
\begin{equation}
\label{barpartial}
\bar\partial\colon  T^{(0,1)}M\otimes\psi^*L\to\psi^*L^\perp
\end{equation}

Let $g$ be any metric compatible with the complex structure on $M$. Assume that $\psi\colon (M,g)\to\mathbb{CP}^n$ is a linearly full (i.e. its image is not contained in a proper projective subspace) harmonic map, where $\mathbb{CP}^n$ is endowed with the Fubini-Study metric. In local complex coordinates the harmonicity can be expressed as 
$$
(\nabla d\psi)\left(\frac{\partial}{\partial\bar z},\frac{\partial}{\partial z}\right)=(\nabla d\psi)\left(\frac{\partial}{\partial z},\frac{\partial}{\partial\bar z}\right)=0,
$$
 which is equivalent to the fact that $\partial$ ($\bar\partial$) defined in~\eqref{partial} (in~\eqref{barpartial}) is a(n) (anti-)holomorphic morphism of bundles. Thus their images can be defined across zeroes of $d\psi$ and give rise to a line subbundle $L_1$ ($L_{-1}$). Denoting $\psi^*L$ by $L_0$ we have a holomorphic map
$$
\partial_0\colon T^{(1,0)}M\otimes L_0\to L_1
$$
and an antiholomorphic map
$$
\bar\partial_0\colon T^{(0,1)}M\otimes L_0\to L_{-1}
$$

Bundles $L_1$ and $L_{-1}$ correspond to maps $\psi_1,\psi_{-1}\colon M\to\mathbb{CP}^n$. It is proved in~\cite{CW1} that if $\psi_0=\psi$ is harmonic then so are $\psi_1$ and $\psi_{-1}$. Repeating the process one constructs a sequence of bundles $\{L_p\}$, holomorphic maps
$$
\partial_p\colon T^{(1,0)}M\otimes L_p\to L_{p+1}
$$
and antiholomorphic maps
$$
\bar\partial_p\colon T^{(0,1)}M\otimes L_p\to L_{p-1}.
$$
This collection of data is referred to as {\em a harmonic sequence associated to $\psi = \psi_0$}. 

The map $\partial_p$ is a holomorphic section of $(T^{(1,0)}M)^*\otimes L_p^*\otimes L_{p+1}$ and therefore one has
\begin{equation}
\label{ramification}
0 \leqslant c_1((T^{(1,0)}M)^*\otimes L_p^*\otimes L_{p+1}) = 2\gamma-2+c_1(L_{p+1}) - c_1(L_p),
\end{equation}
where $\gamma$ is the genus of $M$.

If $\partial_p\equiv0$ ($\bar\partial_p\equiv0$) but $\partial_{p-1}\not\equiv0$ ($\bar\partial_{p-1}\not\equiv 0$), then we say that the harmonic sequence terminates with $L_p$ at the right (left). In this case the map $\psi_p$ is antiholomorphic (holomorphic) and the harmonic sequence coincides with its Frenet frame.

\begin{remark}
\label{orientation:remark}
The harmonic sequence associated to $\psi$ depends on the choice of the orientation. Let $\overline{\psi}$ denote the same map $\psi$ considered as a map from $\overline M$, the surface with the same conformal class of metrics, but with the opposite orientation. If $z$ is a local holomorphic coordinate on $M$, then $\bar z$ is a local holomorphic coordinate on $\overline{M}$. As a result, the roles of $\partial_p$ and $\bar\partial_p$ are reversed. Thus, if $\{L^\psi_i\}$ is a harmonic sequence associated to $\psi$ and $\{L_i^{\bar\psi}\}$ is a harmonic sequence associated to $\overline\psi$, then $L_{-i}^\psi = L_i^{\bar\psi}$.
\end{remark}

Let $\pi$ be a projection $\pi\colon\mathbb{S}^n\to\mathbb{RP}^n$ and $i$ be an embedding $i\colon\mathbb{RP}^n\to\mathbb{CP}^n$. Since $i$ is totally geodesic, for any harmonic map $\Phi\colon(M,g)\to\mathbb{S}^n$ the composition $\psi=i\circ\pi\circ\Phi$ is harmonic. Moreover, $\Phi$ is linearly full iff $\psi$ is linearly full. 
For the remainder of this section we assume $M=\mathbb{S}^2$. Therefore, one can omit the metric $g$ from the notations and let $n=2m$. Let $\{L_i\}$ be the harmonic sequence associated to $\psi$. 
We remark the following properties.
\begin{itemize}
\item[1)] One has $\langle\Phi,\Phi\rangle = 1$. Therefore, $\langle\partial_z\Phi,\Phi\rangle = \langle\partial_{\bar z}\Phi,\Phi\rangle = 0$, i.e. $\Phi$ is parallel.
\item[2)] $\Phi\colon M\to\mathbb{S}^n\subset\mathbb{R}^{n+1}\subset\mathbb{C}^{n+1}$ is a global nowhere zero section of $L_0$. Since $\Phi$ is parallel, $L_0$ is trivial and $c_1(L_0)=0$.
\item[3)] $\bar L_p = L_{-p}$.
\item[4)] If $\Phi$ is linearly full, then the harmonic sequence always terminates with $L_{-m}$ at the left and with $L_{m}$ at the right, see e.g.~\cite{Barbosa, Calabi}. Note that Barbosa does not use the language of harmonic sequences, but his maps $G_i$ are exactly local holomorphic sections of $L_{-i}$. The map $\psi_{-m}$ associated to $L_{-m}$ is a holomorphic curve and is called {\em the directrix}  of $\Phi$. 
\item[5)] If $\Phi$ is linearly full, the trivial bundle $\mathbb{S}^2\times \mathbb{C}^{2m+1}$ is a direct sum of all the elements in harmonic sequence, i.e. 
$$
\mathbb{S}^2\times \mathbb{C}^{2m+1} =\bigoplus_{i=-m}^m L_i.
$$ 
\end{itemize}

\subsection{Twistor correspondence} 

Let us denote by $(\cdot,\cdot)$ the $\mathbb{C}$-bilinear extension of the usual Euclidean inner product on $\mathbb{R}^{2m+1}$ to $\mathbb{C}^{2m+1}$. A $\mathbb{C}$-linear subspace $V\subset\mathbb{C}^{2m+1}$ is called {\em isotropic} if $(\cdot,\cdot)|_V\equiv 0$ or, equivalently, if $V\perp\bar V$. The twistor space $\mathcal Z_m$ of $\mathbb{S}^{2m}$ is defined to be the space of all $m$-dimensional isotropic subspaces of $\mathbb{C}^{2m+1}$ considered as a complex submanifold of the Grassman manifold $Gr_{m,2m+1}(\mathbb{C})$. 
If $L\subset \mathcal Z_m\times\mathbb{C}^{2m+1}$ is the tautological bundle over $\mathcal Z_m$, the holomorphic tangent bundle $T^{(1,0)}\mathcal Z_m$ is isomorphic to the subbundle $\hom^s(L,L^\perp)\subset \hom(L,L^\perp)$ consisting of morphisms skew symmetric with respect to $(\cdot,\cdot)$.

The twistor projection $\pi_m\colon \mathcal Z_m\to\mathbb{S}^{2m}$ sends $V$ to the unit normal to $V\oplus \bar V$ 
(the direction of the normal is dictated by a choice of the orientation on $\mathbb{S}^{2m}$). The projection $\pi_m$ is a Riemannian submersion and induces a decomposition of $T^{(1,0)}\mathcal Z_m$ into vertical and horizontal distributions.
The vertical distribution $\mathcal{VZ}_m$ is the kernel of the differential $d\pi_m$. The horizontal distribution is the orthogonal complement $\mathcal{HZ}_m = (\mathcal{VZ}_m)^\perp$. If $L_0 = (L\oplus\bar L)^\perp\subset \mathcal Z_m\times\mathbb{C}^{2m+1}$, then under the isomorphism $T^{(1,0)}\mathcal Z_m\cong\hom^s(L,L^\perp)$ one has $\mathcal{HZ}_m\cong \hom(L,L_0)$ and $\mathcal{VZ}_m\cong\hom^s(L,\bar L)$.
A holomorphic map $\Psi\colon \mathbb{S}^2\to\mathcal Z_m$ is called {\em horizontal} if the image of the $(1,0)$-part of the differential $d\Psi\colon T^{(1,0)}M\to T^{(1,0)}\mathcal Z_m$ lies in the horizontal distribution $\mathcal {HZ}_m$.

\begin{theorem}[Twistor correspondence, Barbosa~\cite{Barbosa}]

One has the following,
\begin{itemize}
\item[1)] If $\Psi\colon \mathbb{S}^2\to\mathcal Z_m$ is a horizontal holomorphic map, then $\pm\pi_m\circ\Psi$ are harmonic maps. All harmonic maps $\mathbb{S}^2\to\mathbb{S}^{2m}$ can be obtained in this way.
\item[2)] Let $\Phi\colon\mathbb{S}^2\to\mathcal \mathbb{S}^{2m}$ be a linearly full harmonic map and let $\{L_i\}$ be the corresponding harmonic sequence. Set $L_{<0} = \bigoplus_{i=-m}^{-1}L_i$. The map $\Psi\colon \mathbb{S}^2\to\mathcal Z_m$ given by $z\mapsto L_{<0}(z)$ is the only horizontal holomorphic map satisfying $\pi_m\circ\Psi = \pm\Phi$. 
The map $\Psi$ is referred to as the {\em twistor lift} of $\Phi$.
\end{itemize}
\end{theorem}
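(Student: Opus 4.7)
The plan is to prove both directions of the correspondence by exploiting the harmonic sequence formalism developed above, which is tailor-made for bookkeeping the interplay between the complex geometry of $\mathcal{Z}_m$ and the harmonic map equation on $\mathbb{S}^{2m}$. The forward direction (harmonic map produces twistor lift) is essentially a four-property verification; the converse (horizontal holomorphic projects to harmonic) amounts to reversing the verification, after which uniqueness finishes part (2).

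For the forward direction I start with a linearly full harmonic $\Phi\colon\mathbb{S}^2\to\mathbb{S}^{2m}$ and define $\Psi(z)=L_{<0}(z)$. Four properties must be checked: (i) $L_{<0}\in\mathcal{Z}_m$, i.e.\ $L_{<0}$ is $m$-dimensional and isotropic; (ii) $\Psi$ is holomorphic; (iii) $\Psi$ is horizontal; (iv) $\pi_m\circ\Psi=\pm\Phi$. Property (i) follows from the pairwise orthogonality of the $L_p$ in the harmonic sequence together with $\overline{L_p}=L_{-p}$, so that $\overline{L_{<0}}=\bigoplus_{i>0}L_i$ is orthogonal to $L_{<0}$; combined with the dimension count this yields isotropy. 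Property (iv) is immediate since $(L_{<0}\oplus\overline{L_{<0}})^\perp=L_0=\mathrm{span}_{\mathbb{C}}(\Phi)$. For (ii), each $\bar\partial_p$ lowers the index by one, so the subsheaf of sections with values in $L_{-m}\oplus\cdots\oplus L_{-1}$ is preserved under $\bar\partial$, and $\Psi$ is holomorphic into the Grassmannian. For (iii), horizontality amounts to the vanishing of the component of $\partial\Psi$ in $\hom(L_{<0},\overline{L_{<0}})$; but $\partial_p$ raises the index by one, so applied to a section of $L_{<0}$ it lands in $L_{-m+1}\oplus\cdots\oplus L_0$, and no components in $\overline{L_{<0}}=L_{>0}$ appear.

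For the converse, starting with a horizontal holomorphic $\Psi$, I set $\Phi=\pi_m\circ\Psi$ and recover $L_0=(L_{<0}\oplus\overline{L_{<0}})^\perp=\mathrm{span}(\Phi)$. Harmonicity of $\Phi$ is equivalent to $(\nabla d\Phi)(\partial_z,\partial_{\bar z})$ being proportional to $\Phi$, i.e.\ a section of $L_0$. Writing $d\Phi$ in terms of the horizontal component $\partial\Psi\in\hom(L_{<0},L_0)$ and using holomorphicity of $\Psi$ to commute $\partial$ past $\bar\partial$, the mixed derivative decomposes according to the index-shifting rules of $\partial_p,\bar\partial_p$, and the horizontality and holomorphicity of $\Psi$ conspire to cancel all components outside $L_0$. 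The main obstacle is precisely this bookkeeping: showing that the off-diagonal pieces of the $\partial\bar\partial$-tensor vanish requires a careful choice of local unitary frame adapted to the harmonic sequence and calculations tracking the index shifts, and the cleanest route is Koszul--Malgrange together with the Frenet-type relations implicit in the definitions of $\partial_p$ and $\bar\partial_p$. Uniqueness in part (2) then follows because the directrix $\psi_{-m}$ is intrinsically determined as the maximal-order holomorphic component of any horizontal lift, so $L_{<0}$ is reconstructed from it by iterating $\partial_{-i}$.
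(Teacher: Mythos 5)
The paper does not reprove this theorem---it cites it to Barbosa---so there is no in-paper argument to compare against; I assess your proposal on its own merits. Your forward direction is complete and correct: isotropy of $L_{<0}$, holomorphicity, horizontality, and $\pi_m\circ\Psi=\pm\Phi$ all follow cleanly from the index-shifting behaviour of $\partial$ and $\bar\partial$ on the harmonic sequence together with $\overline{L_p}=L_{-p}$ and the termination at $\pm m$.

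The converse direction, however, contains a genuine logical gap. You propose to finish the bookkeeping via ``the Frenet-type relations implicit in the definitions of $\partial_p$ and $\bar\partial_p$'' and a ``local unitary frame adapted to the harmonic sequence.'' But the maps $\partial_p,\bar\partial_p$ and the line bundles $L_p$ are only defined once $\Phi$ is known to be harmonic---the very thing you are trying to prove---so as written this route is circular. Starting from an arbitrary horizontal holomorphic $\Psi$, the only available data is the coarse trichotomy $L_{<0}=\Psi^*L$, $L_{>0}=\overline{L_{<0}}$, $L_0=(L_{<0}\oplus L_{>0})^\perp$, together with holomorphicity ($\partial_{\bar z}$ preserves $L_{<0}$) and horizontality ($\partial_z$ sends $L_{<0}$ into $L_{\leqslant 0}$). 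The ingredient your sketch omits is the \emph{reality} of $\Phi$: since $\Phi$ is a real unit vector, $\partial_{\bar z}\Phi\perp\Phi$, and combining this with $\partial_{\bar z}L_0\subset L_{\leqslant 0}$ (obtained by differentiating $L_0\perp L_{<0}$ and invoking holomorphicity) forces $\partial_{\bar z}\Phi\in L_{<0}$. Horizontality then gives $\partial^2_{z\bar z}\Phi\in L_{\leqslant 0}$, and reality of $\partial^2_{z\bar z}\Phi$ gives $\partial^2_{z\bar z}\Phi\in\overline{L_{\leqslant 0}}=L_{\geqslant 0}$, hence $\partial^2_{z\bar z}\Phi\in L_0$, which is the harmonic map equation. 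Without invoking reality, the off-diagonal pieces do not cancel. The uniqueness claim has the same circularity problem: the directrix is itself defined via the harmonic sequence, so it cannot be invoked before harmonicity is established. A non-circular route: the computation above shows $\partial_{\bar z}\Phi$ lies in any horizontal holomorphic lift $L'_{<0}$ of $\pm\Phi$; holomorphicity makes $L'_{<0}$ closed under $\partial_{\bar z}$, so $L'_{<0}$ contains the rank-$m$ bundle $L_{<0}$ generated from $\partial_{\bar z}\Phi$ by repeated $\partial_{\bar z}$, and equality of ranks forces $L'_{<0}=L_{<0}$. Finally, your forward argument treats only linearly full $\Phi$, so the clause ``all harmonic maps $\mathbb{S}^2\to\mathbb{S}^{2m}$ can be obtained in this way'' still needs the reduction to a linearly full map into an equatorial subsphere.
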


\subsection{Jacobi operator}
\label{Jacobi:sec}
 In this section we discuss the relationship between Jacobi operator and harmonic sequence. This results in a proof of Theorem~\ref{RP2index:thm}. Let $\Phi\colon\mathbb{S}^2\to\mathbb{S}^{2m}$ be a linearly full harmonic map. Let $\{L_i\}$ be the harmonic sequence associated to $\Phi$ and set $L_{<0} = \bigoplus_{i=-m}^{-1}L_i$; $L_{>0} = \overline {L_{<0}} = \bigoplus_{i=1}^{m}L_i$.
Let $V$ be a vector field in $\dom \,Q_{\Phi, E}$, i.e. $V\perp L_0$. Decompose $V = V_++V_-$ into $L_{>0}$ and $L_{<0}$-parts. Since $V$ is real and $L_{>0} = \overline {L_{<0}}$, one has $V_+ = \overline {V_-}$.
\begin{lemma}
\label{Jacobi:lemma}
Extend the Jacobi operator $J_g$ to $\mathbb{C}$-valued vector fields by $\mathbb{C}$-linearity. Then $L_{>0}$ and $L_{<0}$ are $J_g$-invariant. Furthermore, the following are equivalent,
\begin{itemize}
\item[1)] $J_gV = \lambda V$;
\item[2)] $J_gV_+ = \lambda V_+$;
\item[3)] $J_gV_- = \lambda V_-$ .
\end{itemize}
\end{lemma}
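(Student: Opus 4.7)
The plan is to exploit the harmonic sequence decomposition $\mathbb{S}^2 \times \mathbb{C}^{2m+1} = \bigoplus_{i=-m}^{m} L_i$ to show that $J_g$ preserves both $L_{>0}$ and $L_{<0}$; after that the equivalence of the three eigenvalue conditions will follow from reality of $J_g$ and the relation $V_+ = \overline{V_-}$.

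First, I would rewrite the Jacobi operator as $J_g V = \pi_{L_0^\perp}(\Delta V - |\nabla \Phi|_g^2 V)$ with $\Delta$ applied componentwise in $\mathbb{C}^{2m+1}$. Multiplication by the scalar $|\nabla\Phi|_g^2$ preserves every subbundle, and $L_{>0}, L_{<0} \subset L_0^\perp$ are fixed by $\pi_{L_0^\perp}$. So the invariance claim reduces to showing that $\Delta V$ has no $L_{<0}$-component when $V$ is a section of $L_{>0}$, and symmetrically.

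The key geometric input is the definition of the harmonic sequence itself: for a local section $f$ of $L_p$, the quantity $\pi_{L_p^\perp}(\nabla^{\mathrm{triv}}_{\partial_z} f)$ lies in $L_{p+1}$ by the very definition of $\partial_p$, and $\pi_{L_p^\perp}(\nabla^{\mathrm{triv}}_{\partial_{\bar z}} f)$ lies in $L_{p-1}$ by the definition of $\bar\partial_p$. Hence $\nabla^{\mathrm{triv}}_{\partial_z} f \in L_p \oplus L_{p+1}$ and $\nabla^{\mathrm{triv}}_{\partial_{\bar z}} f \in L_{p-1} \oplus L_p$, with the convention $L_{m+1} = L_{-m-1} = 0$ forced by termination of the sequence. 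Writing $\Delta = -\tfrac{4}{\rho^2} \partial_z \partial_{\bar z}$ in a local conformal coordinate and composing these two statements yields $\Delta f \in L_{p-1} \oplus L_p \oplus L_{p+1}$. Summing over $p = 1, \ldots, m$ and using $L_{m+1} = 0$, one concludes that $\Delta V \in L_0 \oplus L_{>0}$ for any $V \in L_{>0}$; applying $\pi_{L_0^\perp}$ then delivers $J_g V \in L_{>0}$. The case of $L_{<0}$ is identical.

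For the equivalence of 1)--3), I would first observe that the decomposition $V = V_+ + V_-$ is forced to satisfy $V_+ = \overline{V_-}$: the relation $\bar{L}_p = L_{-p}$ combined with reality of $V$ pairs the $L_{>0}$ and $L_{<0}$ components by conjugation, and the $L_0$-component vanishes because $V \perp \Phi$ implies $V$ is Hermitian-orthogonal to $L_0 = \mathbb{C}\Phi$. Since $J_g$ is the $\mathbb{C}$-linear extension of a real self-adjoint operator, it commutes with complex conjugation and has real spectrum, so one may assume $\lambda \in \mathbb{R}$ and deduce $J_g V_+ = \overline{J_g V_-}$. Combined with the invariance of $L_{>0}$ and $L_{<0}$, this makes 1)--3) manifestly equivalent. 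The only step that requires genuine work is the first, namely verifying that the trivial-connection derivatives respect the filtration by $L_p \oplus L_{p\pm 1}$; this is essentially a repackaging of the definitions of $\partial_p$ and $\bar\partial_p$, and once it is in hand the rest of the argument is bookkeeping.
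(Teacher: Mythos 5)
Your proof is correct and takes essentially the same route as the paper's: the key observation in both is that $\partial_z\partial_{\bar z}$ maps $L_{>0}$ into $L_0\oplus L_{>0}$ (and symmetrically for $L_{<0}$), so $\pi_{L_0^\perp}\partial_z\partial_{\bar z}$ preserves the $\pm$-decomposition, after which the equivalence of 1)--3) follows from reality of $J_g$ and $V_+=\overline{V_-}$. You simply spell out the derivation of that observation from the harmonic-sequence relations $\partial_z\Gamma(L_p)\subset\Gamma(L_p\oplus L_{p+1})$ and $\partial_{\bar z}\Gamma(L_p)\subset\Gamma(L_{p-1}\oplus L_p)$, which the paper asserts without elaboration.
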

\begin{proof}
Let $\pi^\perp_0$ be an orthogonal projection onto $L_{>0}\oplus L_{<0} = L^\perp_0$. Recall that 
$$
J_g V = \pi^\perp_0\left(\Delta_g V - |\nabla \widetilde \Phi|^2_gV\right).
$$
For any local complex coordinate $z$, let $g=e^{2\omega}dzd\bar z$. Then one has
$$
J_g V = \frac{4}{e^{2\omega}}\pi^\perp_0\left(-\frac{\partial^2 V}{\partial z\partial \bar z} - \left(\frac{\partial \widetilde \Phi}{\partial z},\frac{\partial \widetilde \Phi}{\partial \bar z} \right)V\right).
$$
Our main observation is that the operator $\frac{\partial^2}{\partial z\partial \bar z}$ maps $L_{<0}$ to $L_{<0}\oplus L_0$ and $L_{>0}$ to $L_{>0}\oplus L_0$. In other words $\pi^\perp_0\frac{\partial^2}{\partial z\partial \bar z}$ leaves the spaces $L_{>0}$ and $L_{<0}$ invariant. Thus, these spaces are $J_g$-invariant and the statements $1)\implies 2)$ and $1)\implies 3)$ follow.

At the same time $2) \iff 3)$ since $J_g$ is real and $V_+ = \overline {V_-}$. Assuming either $2)$ or $3)$ one has
$$
J_gV = J_gV_-+J_gV_+ = \lambda V_-+\lambda V_+ = \lambda V.
$$
\end{proof}

Let $V$ be a real vector field in $\dom Q_{\Phi,E}$ so that $V=2\Re V_+$. We define the {\em conjugate} vector field $V^*$ to be $V^* = 2\Im V_+$, where $\Re$ and $\Im$ denote the real and imaginary part respectively.

\begin{lemma}
\label{conjugate:lemma}
The operation of taking conjugate vector field satisfies the following properties,
\begin{itemize}
\item[1)] $(J_gV)^* = J_g(V^*)$.
\item[2)] $(V^*)^* = -V$.
\end{itemize}
\end{lemma}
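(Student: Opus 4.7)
The plan is to reduce both statements to simple algebraic manipulations in the decomposition $V = V_+ + V_-$ with $V_- = \overline{V_+}$, combined with the invariance established in Lemma~\ref{Jacobi:lemma}. First I would rewrite the definition of $V^*$ explicitly in terms of the components. Since $V = 2\Re V_+ = V_+ + \overline{V_+} = V_+ + V_-$, the standard identity $\Im w = (w - \overline{w})/(2i)$ gives
$$
V^* = 2\Im V_+ = \frac{V_+ - V_-}{i} = -i V_+ + i V_-.
$$
In particular, $V^*$ is real (as one directly checks: its complex conjugate equals $iV_- - iV_+$, which is itself), and its decomposition with respect to $L_{>0} \oplus L_{<0}$ reads $(V^*)_+ = -iV_+$, $(V^*)_- = iV_-$. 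Thus the operation $V \mapsto V^*$ corresponds under the real-linear isomorphism $V \leftrightarrow V_+$ to multiplication by $-i$ on the $L_{>0}$-component.

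With this observation in hand, property~2) is immediate: applying the operation twice multiplies $V_+$ by $(-i)^2 = -1$, so
$$
(V^*)^* = 2\Re\bigl((V^*)_+\bigr) \cdot (-1) = -2\Re V_+ = -V,
$$
or equivalently, $(V^*)^*$ decomposes as $(-i)(-i)V_+ + (i)(i)V_- = -V_+ - V_- = -V$. Property~1) will follow by combining two ingredients: by Lemma~\ref{Jacobi:lemma}, the Jacobi operator $J_g$ preserves the subbundles $L_{>0}$ and $L_{<0}$, so $(J_g V)_\pm = J_g(V_\pm)$; and since $J_g$ is a real differential operator extended by $\mathbb{C}$-linearity, it commutes with multiplication by scalars in $\mathbb{C}$, in particular with $\pm i$. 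Consequently
$$
(J_gV)^* = -i(J_gV)_+ + i(J_gV)_- = -iJ_g V_+ + iJ_g V_- = J_g(-iV_+ + iV_-) = J_g(V^*).
$$

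Since both assertions reduce to these two facts, there is no real obstacle here; the only bookkeeping is to confirm that the extension of $J_g$ by complex linearity is well defined on the decomposition and that Lemma~\ref{Jacobi:lemma} indeed gives the componentwise invariance. Both are already in place, so the lemma follows.
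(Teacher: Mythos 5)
Your proof is correct and follows essentially the same route as the paper: both arguments write $V^* = -iV_+ + iV_-$, use Lemma~\ref{Jacobi:lemma} to split $J_g V = J_g V_+ + J_g V_-$ for property~1), and read off $(V^*)_+ = -iV_+$ for property~2). The only difference is presentational (you spell out the identification of $V \mapsto V^*$ with multiplication by $-i$ on the $L_{>0}$-component), which is a clean way to package the same computation.
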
 
\begin{proof}
By Lemma~\ref{Jacobi:lemma} the decomposition of $J_gV$ into $L_{>0}$ and $L_{<0}$-parts is $J_gV = J_gV_+ + J_gV_-$. Therefore, one has
$$
(J_gV)^* = -i(J_gV_+ - J_gV_-)  = J_g(-i(V_+-V_-))= J_g(V^*). 
$$

If $V^* = 2\Im V_+$, then $V^* = 2\Re(-iV_+)$ and therefore one has
$$
(V^*)^* = 2\Im(-iV_+) = -2\Re(V_+) = -V.
$$
\end{proof}
Lemma~\ref{conjugate:lemma} implies that the conjugation is $1$-to-$1$ linear map that preserves the eigenspaces of $J_g$. Furthermore, it does not have real eigenvectors and, therefore, both $\ind_E(\Phi)$ and $\nul_E(\Phi)$ are even. 

Finally, we are ready to prove Theorem~\ref{RP2index:thm}.
Let $\Psi$ be a linearly full harmonic map $\Psi\colon \mathbb{RP}^2\to\mathbb{S}^{2m}$ and let $\Phi\colon\mathbb{S}^2\to\mathbb{S}^{2m}$ be its antipodal lift. 
Set $\Phi^\sigma = \Phi\circ\sigma$, where $\sigma$ is the antipodal involution on $\mathbb{S}^2$. Since $\sigma$ inverses orientation and $\Phi$ is a lift of $\Psi$, one has that $\Phi^\sigma$ is the same map as $\Phi$, but with the orientation of $\mathbb{S}^2$ reversed. Using the notations of Remark~\ref{orientation:remark} one has that $\Phi^\sigma = \overline\Phi$. Therefore, $L^\Phi_{>0} = L^{\Phi^\sigma}_{<0}$ and $L^\Phi_{<0} = L^{\Phi^\sigma}_{>0}$. As a result, one has $\sigma^*L^\Phi_{>0} = L^\Phi_{<0}$ and $\sigma^*L^\Phi_{<0} = L^\Phi_{>0}$.

The involution $\sigma$ induces an isometric involution $\sigma^*$ on $\dom Q_{E,\Phi}$. Since it is an isometry, it commutes with $J_{g,\Phi}$, therefore, it preserves a decomposition of $\dom Q_{E,\Phi}$ into $\sigma$-odd and $\sigma$-even fields. The eigenvalues of $J_{g,\Psi}$ coincide with the eigenvalues of $J_{g,\Phi}$ restricted to the space of $\sigma$-even fields. Let $V$ be a $\sigma$-even field. We claim that the conjugate $V^*$ is a $\sigma$-odd field. Indeed,
\begin{equation*}
\begin{split}
\sigma^* (V^*) &= 2\sigma^* \Im V_+ = -2i\sigma^*(V_+-V-) \\
&= 2i (\sigma^*V_--\sigma^*V_+ )=-2\Im (\sigma^* V)= -(\sigma^* V)^*,
\end{split}
\end{equation*}
 where we used that the application of $\sigma^*$ interchanges $L^\Phi_{<0}$ and $L^\Phi_{>0}$. Similarly, the conjugate of an odd field is an even field. Thus, for each eigenvalue $\lambda$ of $J_{g,\Phi}$ exactly half of the corresponding eigenfunctions are even. Applying it to negative and to zero eigenvalues implies Theorem~\ref{RP2index:thm}.

\subsection{Jacobi fields}
\label{jacobi:sec}
The purpose of this section is to define the first order analog of twistor correspondence, i.e. to lift any Jacobi field to a vector field on the twistor space $\mathcal Z_m$ satisfying additional properties. The first order correspondence has been established in~\cite{LW} for $m=2$. The main difficulty is the presence of branch points. It has been overcome in~\cite{LW} for $m=2$ with the help of an isomorphism $\mathcal Z_2\cong \mathbb{CP}^3$. The authors indicate that their methods are specific to $m=2$. Below we propose a different approach using harmonic sequence that allows us to extend the $1$-st order twistor correspondence to an arbitrary value of $m$. Finally, we remark that in the most general context the $1$-st order twistor correspondence was studied in~\cite{Simoes}. However, the author considers local lifts away from branch points, so we can not use their results directly.

In the remainder of the paper we only work with $\mathbb{S}^2$, so we once and for all fix the orientation so that we can always speak of the corresponding harmonic sequence.
Let $\Phi$ be a linearly full harmonic map $\Phi\colon\mathbb{S}^2\to\mathbb{S}^{2m}$ and $\{L_i\}$ be the corresponding harmonic sequence. We denote by $\Sigma_s\subset \mathbb{S}^2$ the set of higher singularities of $\Phi$, i.e. the discrete set of zeroes of all maps $\partial_i$ and $\bar\partial_i$. Let $\Sigma' = \mathbb{S}^2\setminus \Sigma_s$ and $f_0 = \Phi$. Thus, on $\Sigma'$ these operators are invertible. Then by~\cite{BJRW} for any local complex coordinate $z$ one can choose local nowhere zero holomorphic sections $f_p$ of $L_p$ such that the following formulae hold,

\begin{equation}
\label{harmseq:eq}
\begin{split}
&\frac{\partial f_p}{\partial z} = f_{p+1} + \left(\frac{\partial}{\partial z}\ln |f_p|^2\right)f_p; \\
&\frac{\partial f_p}{\partial \bar z} = -\gamma_{p-1}f_{p-1}; \\
&\gamma_p = \frac{|f_{p+1}|^2}{|f_p|^2}; \\
&\frac{\partial^2 }{\partial z\partial \bar z} \ln |f_p|^2 = \gamma_p-\gamma_{p-1}; \\
&\frac{\partial^2 }{\partial z\partial \bar z} \ln \gamma_p = \gamma_{p+1}-2\gamma_p+\gamma_{p-1}.
\end{split}
\end{equation}
Note that $\gamma_0=\gamma_{-1} = \left(\frac{\partial \Phi}{\partial z},\frac{\partial \Phi}{\partial \bar z} \right)$
and $\frac{1}{|f_{-p}|^2} = \gamma_{-p}\ldots\gamma_{-1}$. 

Let $V=V_0$ be a Jacobi field along $\Phi$. Assume that there is a one parameter family of harmonic maps $\Phi_t$, such that $V = \frac{d}{dt}|_{t=0}\Phi_t$, i.e. $V$ is integrable. Then one has the corresponding family of local sections $(f_t)_p$. Setting $V_p = \frac{d}{dt}|_{t=0}(f_t)_p$ and taking the $t$ derivatives of~\eqref{harmseq:eq} yields a series of recursive formulae relating $V_p$ and $f_p$ for various values of $p$. In general, it is unknown whether all Jacobi fields are integrable. Nevertheless, one can use these recursive formulae to define $V_p$ starting from $\{f_p\}$ and $V_0$. This is a motivation for the definitions below.
%
%
%
%

Let $\hat\gamma_0 = \frac{1}{4}\pi_{\underline \Phi}(\Delta V)$. Set $\hat\gamma_{-1} = \hat\gamma_0$. For $p\geqslant 0$ define inductively 
\begin{equation}
\label{gammahat:def}
\hat\gamma_{-p-1} = \frac{\partial^2 }{\partial z\partial \bar z}\left(\frac{\hat\gamma_{-p}}{\gamma_{-p}}\right) + 2\hat\gamma_{-p} - 
\hat\gamma_{-p+1}
\end{equation}
and 
$$
V_{-p-1} = -\frac{1}{\gamma_{-p-1}}\left(\frac{\partial V_{-p}}{\partial \bar z} + \hat\gamma_{-p-1}f_{-p-1}\right).
$$

\begin{proposition}
\label{dzV}
One has 
$$
\frac{\partial V_{-p}}{\partial z} = V_{-p+1} + \left(\frac{\partial}{\partial z}\ln |f_{-p}|^2\right)V_{-p} -\frac{\partial}{\partial z}
\left(\sum_{i=-p}^{-1}\frac{\hat\gamma_{i}}{\gamma_{i}}\right)f_{-p}
$$
\end{proposition}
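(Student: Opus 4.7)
The plan is to prove Proposition \ref{dzV} by induction on $p\geqslant 1$. Conceptually, the identity is the infinitesimal analogue of the harmonic sequence formula $\partial_z f_{-p} = f_{-p+1} + (\partial_z\ln|f_{-p}|^2)\,f_{-p}$, and the extra term $-\partial_z(S_{-p})\,f_{-p}$, where $S_{-p}=\sum_{i=-p}^{-1}\hat\gamma_i/\gamma_i$, accounts for the choice of local sections $f_{-p}$ along a one-parameter deformation. For an integrable Jacobi field arising from a family $\Phi_t$ the statement would follow by linearizing the harmonic sequence identity, but the recursive construction of $V_{-p}$ given in the excerpt allows the argument to go through without the integrability assumption.

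For the base case $p=1$, I would differentiate $V_{-1}=-\gamma_{-1}^{-1}(\partial_{\bar z}V_0+\hat\gamma_{-1}f_{-1})$ with respect to $z$, use $\partial_z f_{-1}=f_0+(\partial_z\ln|f_{-1}|^2)f_{-1}$, and evaluate $\partial_z\partial_{\bar z}V_0$ via the Jacobi equation. Specifically, $JV=0$ combined with $V\perp\Phi$ forces $\partial_z\partial_{\bar z}V+\gamma_0 V$ to be a scalar multiple of $f_0=\Phi$, and the definition $\hat\gamma_0=\tfrac14\pi_{\underline{\Phi}}(\Delta V)$ records precisely this scalar, yielding $\partial_z\partial_{\bar z}V=-\gamma_{-1}V-\hat\gamma_{-1}f_0$ (using $\gamma_0=\gamma_{-1}$ and $\hat\gamma_{-1}=\hat\gamma_0$). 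Substituting back and using $\gamma_{-1}=1/|f_{-1}|^2$ (so that $-\partial_z\ln\gamma_{-1}=\partial_z\ln|f_{-1}|^2$), the two $f_0$-contributions cancel, and the remaining $f_{-1}$ terms collapse to $-\partial_z S_{-1}\,f_{-1}$.

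For the inductive step, I would differentiate $V_{-p-1}=-\gamma_{-p-1}^{-1}(\partial_{\bar z}V_{-p}+\hat\gamma_{-p-1}f_{-p-1})$ in $z$ and compute $\partial_z\partial_{\bar z}V_{-p}$ by applying $\partial_{\bar z}$ to the induction hypothesis, plugging in the harmonic sequence identities $\partial_{\bar z}V_{-p+1}=-\gamma_{-p}V_{-p}-\hat\gamma_{-p}f_{-p}$, $\partial_{\bar z}V_{-p}=-\gamma_{-p-1}V_{-p-1}-\hat\gamma_{-p-1}f_{-p-1}$, $\partial_{\bar z}f_{-p}=-\gamma_{-p-1}f_{-p-1}$, and $\partial_z\partial_{\bar z}\ln|f_{-p}|^2=\gamma_{-p}-\gamma_{-p-1}$. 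Collecting coefficients, the $V_{-p-1}$ coefficient simplifies via $\ln\gamma_{-p-1}=\ln|f_{-p}|^2-\ln|f_{-p-1}|^2$ to exactly $\partial_z\ln|f_{-p-1}|^2$; the $V_{-p}$ coefficient equals $1$; and the $f_{-p-1}$ coefficient reassembles into $-\partial_z S_{-p-1}$ via the identity $\partial_z(\hat\gamma_{-p-1}/\gamma_{-p-1})=\gamma_{-p-1}^{-1}\partial_z\hat\gamma_{-p-1}-\gamma_{-p-1}^{-1}\hat\gamma_{-p-1}\partial_z\ln\gamma_{-p-1}$.

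The main obstacle, and the reason for the specific form of the recursion (\ref{gammahat:def}), is verifying that the total $f_{-p}$ coefficient vanishes. This reduces to the identity
\[
\hat\gamma_{-p}+\frac{\partial^2 S_{-p}}{\partial z\,\partial\bar z}=\hat\gamma_{-p-1},
\]
which I would prove by summing (\ref{gammahat:def}), rewritten as $\partial_z\partial_{\bar z}(\hat\gamma_i/\gamma_i)=\hat\gamma_{i-1}-2\hat\gamma_i+\hat\gamma_{i+1}$, over $i=-p,\ldots,-1$; the right-hand side telescopes to $(\hat\gamma_{-p-1}-\hat\gamma_{-1})-(\hat\gamma_{-p}-\hat\gamma_0)=\hat\gamma_{-p-1}-\hat\gamma_{-p}$ thanks to the initial condition $\hat\gamma_{-1}=\hat\gamma_0$. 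In other words, the recursion defining the $\hat\gamma_{-p-1}$ is engineered precisely so that Proposition \ref{dzV} holds.
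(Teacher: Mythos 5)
Your proposal is correct and takes essentially the same route as the paper's proof: induction on $p$, with the base case handled by the Jacobi equation giving $\partial_z\partial_{\bar z}V_0 = -\gamma_{-1}V_0 - \hat\gamma_{-1}f_0$, and the inductive step reduced to a coefficient check in which the $f_{-p}$ coefficient vanishes by the telescoping identity $\partial_z\partial_{\bar z}\bigl(\sum_{i=-p}^{-1}\hat\gamma_i/\gamma_i\bigr) = \hat\gamma_{-p-1}-\hat\gamma_{-p}$ (using $\hat\gamma_{-1}=\hat\gamma_0$). The conceptual remark that the recursion \eqref{gammahat:def} is engineered exactly so that this coefficient vanishes matches the paper's motivation for the definitions preceding the proposition.
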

\begin{proof}
The proof is by induction.

Base of the induction: $p=1$. Recall that $V_0$ is a Jacobi field and, therefore, by definition of $\hat\gamma_{-1}$ one has
$$
\frac{\partial^2 V_{0}}{\partial \bar z\partial z} = -\gamma_{-1}V_0-\hat\gamma_{-1}f_0
$$
Thus, by definition of $V_{-1}$ one has 
\begin{equation*}
\begin{split}
&\frac{\partial V_{-1}}{\partial z} = \frac{\partial }{\partial z}\left(-\frac{1}{\gamma_{-1}}\right)\left(\frac{\partial V_{0}}{\partial \bar z} + \hat\gamma_{-1}f_{-1}\right) \\
&- \frac{1}{\gamma_{-1}}\left(\frac{\partial^2 V_{0}}{\partial \bar z\partial z} + \frac{\partial \hat\gamma_{-1}}{\partial z}f_{-1} + \hat\gamma_{-1}\left(f_0 + \left(\frac{\partial }{\partial z}\ln|f_{-1}|^2\right) f_{-1}\right)\right) = \\
&\frac{\partial }{\partial z}\left(\frac{1}{\gamma_{-1}}\right)\gamma_{-1} V_{-1} - \frac{1}{\gamma_{-1}}\left(-\gamma_{-1} V_0 +\left( \frac{\partial \hat\gamma_{-1}}{\partial z} + \hat\gamma_{-1}\frac{\partial }{\partial z}\ln|f_{-1}|^2\right)f_{-1}\right).
\end{split}
\end{equation*}
Using that $\gamma_{-1} = |f_{-1}|^{-2}$ completes the proof of the base.

Step of the induction: assume the formula is proved for $p$, we prove it for $p+1$. 
First remark that by the step of induction 
$$
\frac{\partial^2}{\partial z\partial \bar z} V_{-p} = \frac{\partial }{\partial \bar z}\left(V_{-p+1} + \left(\frac{\partial}{\partial z}\ln |f_{-p}|^2\right)V_{-p} -\frac{\partial}{\partial z}
\left(\sum_{i=-p}^{-1}\frac{\hat\gamma_{i}}{\gamma_{i}}\right)f_{-p} \right).
$$
After applying the definition of $V_{-p-1}$ to the expression $\frac{\partial V_{-p-1}}{\partial z}$ and using the above formula, we obtain an expression involving the vectors $V_{-p}$, $V_{-p-1}$, $f_{-p}$ and $f_{-p-1}$ with some coefficients. The coefficients are as follows.

Before $V_{-p}$,
$$
-\frac{1}{\gamma_{-p-1}}\left(-\gamma_{-p} + \frac{\partial^2}{\partial z\partial \bar z}\ln|f_{-p}|^2\right) = -\frac{1}{\gamma_{-p-1}}\left(-\gamma_{-p} + \gamma_{-p}-\gamma_{-p-1}\right) = 1.
$$

Before $V_{-p-1}$,
$$
\frac{\partial}{\partial z}\left(\frac{1}{\gamma_{-p-1}}\right)\gamma_{-p-1} + \frac{\partial}{\partial z}\ln|f_{-p}|^2 = \frac{\partial}{\partial z}\left(-\ln\gamma_{-p-1} + \ln|f_{-p}|^2\right) = \frac{\partial}{\partial z}\ln|f_{-p-1}|^2.
$$

Before $f_{-p}$,
\begin{equation*}
\begin{split}
-\frac{1}{\gamma_{-p-1}}\left(\hat\gamma_{-p-1} - \frac{\partial^2}{\partial z\partial \bar z}\left(\sum_{i=-p}^{-1}\frac{\hat\gamma_i}{\gamma_i}\right)-\hat\gamma_{-p}\right)=0,
\end{split}
\end{equation*}
where we have used that by definition of $\hat\gamma_i$ one has
\begin{equation*}
\begin{split}
\frac{\partial^2}{\partial z\partial \bar z}\left(\sum_{i=-p}^{-1}\frac{\hat\gamma_i}{\gamma_i}\right) &= \sum_{i=-p}^{-1}\left(\hat\gamma_{i-1}-2\hat\gamma_i+\hat\gamma_{i+1}\right) \\
&= \hat\gamma_{-p-1}-\hat\gamma_{-p}-\hat\gamma_{-1}+\hat\gamma_{0} = \hat\gamma_{-p-1}-\hat\gamma_{-p}.
\end{split}
\end{equation*}

Before $f_{-p-1}$,
\begin{equation*}
\begin{split}
&-\frac{1}{\gamma_{-p-1}}\left(\frac{\partial \hat\gamma_{-p-1}}{\partial z} + \hat\gamma_{-p-1}\frac{\partial}{\partial z}\ln|f_{-p-1}|^2 - \hat\gamma_{-p-1}\frac{\partial}{\partial z}\ln|f_p|^2 \right.  \\
&\left.+ \gamma_{-p-1}\frac{\partial}{\partial z}\left(\sum_{i=-p}^{-1}\frac{\hat\gamma_i}{\gamma_i}\right)\right)=- \frac{\partial}{\partial z}\left(\sum_{i=-p}^{-1}\frac{\hat\gamma_i}{\gamma_i}\right) \\
&- \frac{1}{\gamma_{-p-1}}\left(\frac{\partial \hat\gamma_{-p-1}}{\partial z} - \hat\gamma_{-p-1}\frac{\partial}{\partial z}\ln\gamma_{-p-1}\right) = - \frac{\partial}{\partial z}\left(\sum_{i=-p-1}^{-1}\frac{\hat\gamma_i}{\gamma_i}\right),
\end{split}
\end{equation*}
which completes the proof.
\end{proof}

Set $L_{\geqslant 0}=\bigoplus_{p=0}^m L_p$, $L_{\leqslant 0}=\bigoplus_{p=0}^m L_{-p}$, $L_{> 0}=\bigoplus_{p=1}^m L_p$, $L_{< 0}=\bigoplus_{p=1}^m L_{-p}$. We let $\pi_{\geqslant 0}$, $\pi_{\leqslant 0}$, $\pi_{>0}$, $\pi_{<0}$ be the corresponding orthogonal projections.


Recall that to each linearly full harmonic map $\Phi\colon\mathbb{S}^2\to\mathbb{S}^{2m}$ we associated a holomorphic twistor lift $\Psi\colon\mathbb{S}^2\to\mathcal Z_m$. Since $\Psi^*T^{(1,0)}\mathcal Z_m\cong \hom^s(L_{<0},L_{\geqslant 0})$, we need to construct an element of $\hom^s(L_{<0},L_{\geqslant 0})$ from the Jacobi field $V_0$. This motivates the following definitions. We set $l_{-p}\in\hom(L_{-p},L_{\geqslant 0})$, $p=0,\ldots, m$ and $l\in\hom(L_{<0},L_{\geqslant 0})$ by setting locally on $\Sigma'$ that $l_{-p}(f_{-p}) = \pi_{\geqslant 0} V_{-p}$ for $p\geqslant 0$ and $l(f_{-p}) = l_{-p}(f_{-p})$ for $p\geqslant 1$. Our next goal is to extend $l$ across singular points and to show that $l\in\hom^s(L_{<0},L_{\geqslant 0})$

In the following we use the notation $\partial_{z,p}$ for the map $\partial_p\left(\frac{\partial}{\partial z},\cdot\right)\colon L_p\to L_{p+1}$. Similarly, we use $\bar\partial_{\bar z,p}$ for the map $\bar\partial_p\left(\frac{\partial}{\partial \bar z},\cdot\right)\colon L_p\to L_{p-1}$.

\begin{proposition}
\label{antisymmetric:prop}
For all $p\geqslant 0$ one has the following, 
\begin{equation}
\label{invariantl}
\nabla_{\bar z} l_{-p}= l_{-p-1}\circ \bar\partial_{\bar z,-p},
\end{equation}
where $\nabla$ is the connection in $\hom(L_{-p},L_{\geqslant 0})$. In particular, $l_{-p}$ and $l$ are independent of the choice of a local complex coordinate.

Moreover,
$$
\nabla_{\bar z} l = 0,
$$
where $\nabla$ is a connection in $\hom(L_{<0},L_{\geqslant 0})$, i.e. $l$ is holomorphic on $\Sigma'$.
\end{proposition}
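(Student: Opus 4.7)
The strategy is to verify \eqref{invariantl} by evaluating both sides on the local holomorphic frame $f_{-p}$ of $L_{-p}$, exploiting the harmonic sequence identities \eqref{harmseq:eq} together with the defining recursion for $V_{-p}$. Since $L_{-p}$ and $L_{\geqslant 0}$ are Hermitian subbundles of the trivial $\mathbb{C}^{2m+1}$-bundle, their induced connections are just the orthogonal projections of the flat connection; in particular $\nabla^{L_{-p}}_{\bar z} f_{-p} = \pi_{L_{-p}}(-\gamma_{-p-1} f_{-p-1}) = 0$, so $(\nabla_{\bar z} l_{-p})(f_{-p}) = \nabla^{L_{\geqslant 0}}_{\bar z}(\pi_{\geqslant 0} V_{-p}) = \pi_{\geqslant 0}\bigl(\partial_{\bar z}(\pi_{\geqslant 0} V_{-p})\bigr)$. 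A crucial observation is that $\partial_{\bar z}$ preserves $L_{<0}$ on local smooth sections, because each $\partial_{\bar z} f_{-k} = -\gamma_{-k-1} f_{-k-1}$ still lies in $L_{<0}$; hence $\pi_{\geqslant 0}(\partial_{\bar z}(\pi_{<0} V_{-p})) = 0$, which lets us replace $\pi_{\geqslant 0} V_{-p}$ by $V_{-p}$ and reduce the left-hand side to $\pi_{\geqslant 0}(\partial_{\bar z} V_{-p})$.

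For $p = 0, 1, \ldots, m-1$ the recursion rearranges to $\partial_{\bar z} V_{-p} = -\gamma_{-p-1} V_{-p-1} - \hat\gamma_{-p-1} f_{-p-1}$; since $f_{-p-1} \in L_{<0}$, applying $\pi_{\geqslant 0}$ yields $-\gamma_{-p-1} \pi_{\geqslant 0} V_{-p-1} = l_{-p-1}(-\gamma_{-p-1} f_{-p-1}) = l_{-p-1}(\bar\partial_{\bar z,-p}(f_{-p}))$, confirming \eqref{invariantl}. The main obstacle is the boundary case $p = m$: the harmonic sequence terminates, so $\bar\partial_{\bar z,-m} = 0$ and the recursion breaks down, reducing the identity to the nontrivial assertion $\pi_{\geqslant 0}(\partial_{\bar z} V_{-m}) = 0$. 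I would attack it by combining Proposition~\ref{dzV} at $p = m$, the recursion $\partial_{\bar z} V_{-m+1} = -\gamma_{-m} V_{-m} - \hat\gamma_{-m} f_{-m}$, the commutation $[\partial_z, \partial_{\bar z}] = 0$ on the flat ambient bundle, and the terminal identity $\partial_{\bar z}\partial_z \ln|f_{-m}|^2 = \gamma_{-m}$ (since $\gamma_{-m-1}=0$) to force $\partial_{\bar z} V_{-m}$ into $L_{<0}$. An alternative is to invoke $(\nabla^{0,1})^2 = 0$ on the Hermitian holomorphic bundle $\hom(L_{-m+1}, L_{\geqslant 0})$ together with surjectivity of $\bar\partial_{\bar z,-m+1}$ on $\Sigma'$ to bootstrap from the already-proved case $p = m-1$.

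Coordinate independence then follows by induction on $p$: $l_0$ is globally defined via the global frame $f_0 = \Phi$, and \eqref{invariantl} is tensorial with both $\nabla_{\bar z}$ and $\bar\partial_{\bar z,-p}$ transforming homogeneously under change of coordinate, so pointwise surjectivity of $\bar\partial_{\bar z,-p}$ on $\Sigma'$ determines each subsequent $l_{-p-1}$ uniquely as a coordinate-free section of $\hom(L_{-p-1}, L_{\geqslant 0})$. Finally, for $\nabla_{\bar z} l = 0$ on $L_{<0}$, I evaluate $(\nabla_{\bar z} l)(f_{-p}) = \nabla^{L_{\geqslant 0}}_{\bar z}(l(f_{-p})) - l(\nabla^{L_{<0}}_{\bar z} f_{-p})$ and use $\nabla^{L_{<0}}_{\bar z} f_{-p} = \pi_{<0}(-\gamma_{-p-1} f_{-p-1}) = \bar\partial_{\bar z,-p}(f_{-p})$ for $p < m$ (and $= 0$ for $p = m$); the two contributions cancel exactly by \eqref{invariantl}, giving holomorphicity of $l$ on $\Sigma'$.
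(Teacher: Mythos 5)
Your plan for $p = 0,\ldots,m-1$ is exactly the paper's argument: evaluate on $f_{-p}$, use $\pi_{-p}\,\partial_{\bar z}f_{-p}=0$, use that $\partial_{\bar z}$ maps $L_{<0}$ into $L_{<0}$ to replace $\pi_{\geqslant 0}V_{-p}$ by $V_{-p}$, and then plug in the rearranged recursion $\partial_{\bar z}V_{-p} = -\gamma_{-p-1}V_{-p-1}-\hat\gamma_{-p-1}f_{-p-1}$. The coordinate-independence induction and the derivation of $\nabla_{\bar z}l=0$ from \eqref{invariantl} also match the paper.

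You are right to flag $p=m$: the defining recursion for $V_{-p-1}$ divides by $\gamma_{-p-1}$, which vanishes identically for $p=m$, so the rearranged identity $\partial_{\bar z}V_{-m}=-\gamma_{-m-1}V_{-m-1}-\hat\gamma_{-m-1}f_{-m-1}=0$ is not available by definition; one needs a separate argument that $\pi_{\geqslant 0}\,\partial_{\bar z}V_{-m}=0$. (The paper's written proof applies the recursion uniformly and does not spell this out either, so your concern is a genuine subtlety in the statement as posed for ``all $p\geqslant 0$,'' and in the holomorphicity check at $f_{-m}$.) However, neither of your two sketches closes the gap. Carrying out the first sketch (Proposition~\ref{dzV} at $p=m$, the recursion at $p=m-1$, $[\partial_z,\partial_{\bar z}]=0$, and $\partial_{\bar z}\partial_z\ln|f_{-m}|^2=\gamma_{-m}$, plus the telescoping of $\partial_{\bar z}\partial_z\sum\hat\gamma_i/\gamma_i$ via \eqref{gammahat:def}) gives only the first-order relation $\partial_z\bigl(\partial_{\bar z}V_{-m}\bigr) = \bigl(\partial_z\ln|f_{-m}|^2\bigr)\,\partial_{\bar z}V_{-m} - \hat\gamma_{-m-1}f_{-m}$, which by itself does not force $\partial_{\bar z}V_{-m}$ (or even its $L_{\geqslant 0}$-part) to vanish — an additional input is needed. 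The second sketch, invoking $(\nabla^{0,1})^2=0$, is vacuous: on a Riemann surface there are no nonzero $(0,2)$-forms, so $\bar\partial^2=0$ is automatic and carries no information about $l_{-m}$; moreover the maps $\bar\partial_{-p}$ are antiholomorphic, not parallel for $\nabla_{\bar z}$, so the proposed bootstrap from $p=m-1$ does not go through as written. So the boundary case remains an unfilled hole in your proposal, even though you deserve credit for identifying it.
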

\begin{proof}
Both sides of~\eqref{invariantl} are linear maps, therefore, it is sufficient to check the relation for a specific section. Let us substitute $f_{-p}$ and check that the equality is satisfied. 

Note that $\frac{\partial}{\partial\bar z} L_p\subset L_{p-1}\oplus L_p$ and therefore, $\pi_{\geqslant 0}\frac{\partial}{\partial\bar z}\pi_{\geqslant 0} = \pi_{\geqslant 0}\frac{\partial}{\partial\bar z}$. Thus,
\begin{equation*}
\begin{split}
(\nabla_{\bar z} l_{-p})(f_{-p}) &= \pi_{\geqslant 0}\frac{\partial}{\partial\bar z}\left(\pi_{\geqslant 0} V_{-p}\right) - l_{-p}\left(\pi_{-p} \frac{\partial}{\partial\bar z} f_{-p}\right) = \pi_{\geqslant 0}\frac{\partial}{\partial\bar z}V_{-p} \\ &= \pi_{\geqslant 0}(-\gamma_{-p-1}V_{-p-1} - \hat\gamma_{-p-1}f_{-p-1}) = \pi_{\geqslant 0}(-\gamma_{-p-1}V_{-p-1}) \\ &= l_{-p-1} (-\gamma_{-p-1}f_{-p-1}) = l_{-p-1}(\bar\partial_{\bar z,-p}(f_{-p})).
\end{split}
\end{equation*}
Therefore, one has the following expression,
$$
l_{-p-1} = (\nabla_{\bar z} l_{-p})\circ (\bar\partial_{\bar z,-p})^{-1},
$$
which allows one to show the independence of the choice of coordinates by induction on $p$.

To prove the second equality, we once again substitute $f_{-p}$ and check that the equality holds. Similarly to the previous computation, one has
\begin{equation*}
\begin{split}
(\nabla_{\bar z} l)(f_{-p}) &= \pi_{\geqslant 0}\frac{\partial}{\partial\bar z}(\pi_{\geqslant 0} V_{-p}) - l\left(\frac{\partial}{\partial\bar z} f_{-p}\right) = \pi_{\geqslant 0}\frac{\partial}{\partial\bar z}V_{-p} - l(-\gamma_{-p-1}f_{-p-1}) \\
&= -\pi_{\geqslant 0}(\gamma_{-p-1} V_{-p-1} - \gamma_{-p-1} V_{-p-1}) = 0.
\end{split}
\end{equation*}
\end{proof}

The equation~\eqref{invariantl} sheds light on the behaviour of $l$ in the neighbourhood of higher singularities. Recall that $\bar\partial_{\bar z,-p}\colon L_{-p}\to L_{-p-1}$ is an anti-holomorhic map.
\begin{proposition}
There exists a non-negative integer $K$, such that for any point $x\in\Sigma_s$ and any local holomorphic coordinate $z$ with $z(x)=0$, one has that $\bar z^K l$ can be extended smoothly across $x$.
\end{proposition}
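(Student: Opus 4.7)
The plan is to prove the statement by induction on $p$, establishing that for each $p = 0, 1, \ldots, m$ there exists a non-negative integer $K_p$ such that $\bar z^{K_p} l_{-p}$ extends smoothly across every point of $\Sigma_s$. The bound for $l$ then follows by taking $K = K_m$, since $L_{<0} = \bigoplus_{p=1}^{m} L_{-p}$ is a smooth direct sum decomposition and the sequence $K_p$ will be non-decreasing.

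For the base case $p = 0$: the Jacobi field $V_0 = V$ is globally smooth on $\mathbb{S}^2$, and $\pi_{\geqslant 0}$ is smooth as the orthogonal projection onto the holomorphic subbundle $L_{\geqslant 0}$; since $f_0 = \Phi$ is nowhere zero, $l_0(f_0) = \pi_{\geqslant 0} V_0$ is smooth everywhere, and $K_0 = 0$ suffices. For the inductive step, fix $x \in \Sigma_s$, a local holomorphic coordinate $z$ with $z(x) = 0$, and local holomorphic frames $e_{-p}$ of $L_{-p}$ and $e_{-p-1}$ of $L_{-p-1}$ near $x$. Since $\bar\partial_{-p}$ is anti-holomorphic as a bundle morphism, in these frames
$$
\bar\partial_{\bar z, -p}(e_{-p}) = \beta_{-p}(\bar z)\, e_{-p-1},
$$
where $\beta_{-p}$ depends only on $\bar z$ and vanishes to some finite order $n_{-p}(x) \geqslant 0$ at $z = 0$; thus $\beta_{-p}(\bar z) = \bar z^{n_{-p}(x)} \tilde \beta_{-p}(\bar z)$ with $\tilde \beta_{-p}(0) \neq 0$.

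Assuming $\bar z^{K_p} l_{-p}$ extends smoothly near $x$, set $a(z,\bar z) := l_{-p}(e_{-p})$, which is a smooth section of $L_{\geqslant 0}$ on the punctured neighbourhood of $x$; then $\bar z^{K_p} a$ is smooth across $x$. The elementary identity
$$
\bar z^{K_p+1}\partial_{\bar z} a = \bar z \, \partial_{\bar z}\bigl(\bar z^{K_p} a\bigr) - K_p\, \bar z^{K_p} a
$$
immediately implies that $\bar z^{K_p+1}\nabla_{\bar z} l_{-p}(e_{-p})$ extends smoothly across $x$. Combining this with~\eqref{invariantl} and dividing by $\beta_{-p}$ gives
$$
\bar z^{K_p + 1 + n_{-p}(x)}\, l_{-p-1}(e_{-p-1}) = \bar z^{K_p+1}\,\nabla_{\bar z} l_{-p}(e_{-p}) \, /\, \tilde \beta_{-p}(\bar z),
$$
the right-hand side of which extends smoothly across $x$ since $\tilde\beta_{-p}(0) \neq 0$. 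Hence one may take $K_{p+1} = K_p + 1 + \max_{x \in \Sigma_s} n_{-p}(x)$, where the maximum is finite because $\Sigma_s$ is a discrete, hence finite, subset of the compact surface $\mathbb{S}^2$. Iterating this argument $m$ times produces the desired uniform integer $K = K_m$.

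The main obstacle, or rather the crucial structural input, is the anti-holomorphicity of $\bar\partial_{-p}$: it is precisely this that allows one to write $\bar\partial_{\bar z, -p}$ in local holomorphic frames as multiplication by a function depending only on $\bar z$, so that its zeros contribute only finite-order $\bar z$-poles when inverted. Without this property one could a priori have mixed $z$- and $\bar z$-dependence in the denominator, and the clean induction by $\bar z$-powers would fail. The analytic content of the argument, namely propagating smoothness through the recursion, is otherwise elementary once the base case is in place.
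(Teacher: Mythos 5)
You follow the paper's strategy exactly: induction on $p$ using formula~\eqref{invariantl}, propagating a uniform bound $K_p$ so that $\bar z^{K_p} l_{-p}$ is smooth across $\Sigma_s$, and then taking the maximum over $p$ and over the finitely many singular points. The base case and the arithmetic of the induction are fine.

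There is, however, a misstatement at the heart of the inductive step. You take \emph{holomorphic} frames $e_{-p}$, $e_{-p-1}$ and assert that anti-holomorphicity of $\bar\partial_{-p}$ forces $\bar\partial_{\bar z,-p}(e_{-p}) = \beta_{-p}(\bar z)\, e_{-p-1}$ with $\beta_{-p}$ a function of $\bar z$ alone. This is not true in general. Anti-holomorphicity of the morphism means $\nabla_z\bar\partial_{\bar z,-p}=0$ for the Hermitian hom-connection; expanding this against sections of $L_{-p}$ and $L_{-p-1}$ shows the coefficient is $\partial_z$-constant only when those sections are \emph{anti}-holomorphic (i.e.\ $\nabla_z$-parallel). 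Holomorphic frames are $\nabla_{\bar z}$-parallel, their $z$-derivatives are in general nonzero, and $\beta_{-p}$ then picks up genuine $z$-dependence through the connection coefficients of $L_{-p}$ and $L_{-p-1}$. This is exactly why the paper works with anti-holomorphic sections $s_{-p}$, $s_{-p-1}$: for those one really has $\bar\partial_{\bar z,-p}(s_{-p}) = \bar z^{k_x} s_{-p-1}$.

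The argument is salvageable because what you actually use is only the local factorization $\beta_{-p}=\bar z^{n_{-p}(x)}\tilde\beta_{-p}$ with $\tilde\beta_{-p}$ smooth and nonvanishing near $x$, and this \emph{is} frame-independent: changing from anti-holomorphic to arbitrary smooth frames multiplies the coefficient by nonvanishing smooth functions, preserving both the vanishing order and the nonvanishing of the remaining factor. So either switch to anti-holomorphic frames (matching the paper, and then $\beta_{-p}$ genuinely equals $\bar z^{n_{-p}(x)}$ after normalization), or keep holomorphic frames but drop the claim about pure $\bar z$-dependence and appeal directly to the frame-independent factorization. Your closing remark attributing the clean $\bar z$-power induction to ``$\bar z$-only dependence in holomorphic frames'' is therefore the one conceptual point that needs revising: anti-holomorphicity is indeed the crucial input, but what it buys is discreteness, finite vanishing order and a $\bar z$-power times nonvanishing factorization --- not $\bar z$-only dependence in a holomorphic frame.
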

\begin{proof}
Since $L_{<0}=\bigoplus_{i=1}^m L_{-p}$ it is sufficient to show the existence of $K$ for each $l_{-p}$, $p =0,\ldots, m$. We prove the assertion by induction on $p$. The base is $p=0$. Indeed, $l_0$ is defined on $\mathbb{S}^2$, since $f_0$ is nowhere zero section and $V_0$ is defined everywhere on $\mathbb{S}^2$.

Suppose that the assertion is proved for $p$. Let $s_{-p-1}$ be a local anti-holomorphic section of $L_{-p-1}$ in the neighbourhood of $x$ such that  $s_{-p-1}(x)\ne 0$. If $k_x$ is the ramification order of $\bar\partial_{-p}$ at $x$, then there exists a local holomorphic coordinate $z$ with $z(x)=0$ and a local anti-holomorphic section $s_{-p}$ of $L_{-p}$ such that $s_{-p}(x)\ne 0$ and $\bar\partial_{\bar z,-p}(s_{-p}) = \bar z^{k_x}s_{-p-1}$. Then by formula~\eqref{invariantl} one has
\begin{equation*}
\begin{split}
(\nabla_{\bar z} (\bar z^{K+1}l_{-p}))(s_{-p}) &= (K+1)\bar z^{K} l_{-p}(s_{-p}) + \bar z^{K+1}l_{-p-1} (\bar\partial_{\bar z,-p}(s_{-p})) \\&= (K+1)\bar z^{K} l_{-p}(s_{-p}) + \bar z^{K+1+k_x}l_{-p-1}(s_{-p-1}).
\end{split}
\end{equation*}
Since other terms in the equality are smoothly defined across $x$, the section $\bar z^{K+1+k_x}l_{-p-1}(s_{-p-1})$ can be smoothly extended to $x$. As $s_{-p-1}(x)\ne 0$ and $l_{-p-1}$ is linear, it follows that $\bar z^{K+1+k_x}l_{-p-1}$ can be smoothly defined at $x$. Finally, since $\Sigma_s$ is discrete and finite, the numbers $k_x$ are uniformly bounded in $x$. Therefore, one can choose a possibly bigger $K'$ that satisfies the assertion of the proposition.
\end{proof}

\begin{proposition}
The section $l$ can be smoothly extended across $\Sigma_s$.
\end{proposition}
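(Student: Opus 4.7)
The plan is to combine the two facts already established---the section $l$ is holomorphic on $\Sigma'$ (by the proposition on $\nabla_{\bar z} l$) and for every $x \in \Sigma_s$ the modified section $\bar z^K l$ extends smoothly across $x$ (by the preceding proposition)---via a short calculation using the operator $\partial_{\bar z}$. The key observation is that, since $l$ is holomorphic away from $x$, the $\bar z$-derivative is insensitive to $l$ itself and only acts on the factor $\bar z^K$; applying $\partial_{\bar z}$ exactly $K$ times therefore strips off the obstructing factor and immediately upgrades smoothness of $\bar z^K l$ to smoothness of $l$.

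To execute this, I would fix $x \in \Sigma_s$, choose a local holomorphic coordinate $z$ vanishing at $x$ on a small disk $D$, and pick local holomorphic frames for $L_{<0}$ and $L_{\geq 0}$ over $D$. In these frames $l$ is represented on $D\setminus\{x\}$ by a matrix of holomorphic functions $A(z)$, while $g(z) := \bar z^K A(z)$ is smooth on all of $D$ by the previous proposition. On the punctured disk, the Leibniz rule applied $K$ times, combined with $\partial_{\bar z} A \equiv 0$, gives
\[
\partial_{\bar z}^K g = K!\, A.
\]
The left-hand side is smooth on $D$ since $g$ is, so the right-hand side $K!\,A$ extends smoothly across $x$. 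Hence $A$, and therefore $l$, extends smoothly; the extension is automatically holomorphic by continuity of the Cauchy--Riemann equations on $D \setminus \{x\}$. Doing this at each point of the discrete set $\Sigma_s$ completes the proof.

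I do not expect a substantial obstacle here: the argument is essentially a one-line observation once both ingredients are in place. A more pedestrian alternative would apply Riemann's removable singularity theorem to $z^K A$ (using the pointwise bound $|A(z)| \leq C/|z|^K$ inherited from smoothness of $\bar z^K A$), and then perform a Fourier-mode analysis on the resulting Laurent expansion of $A$ to rule out negative powers of $z$ one frequency at a time; but the $\partial_{\bar z}^K$ shortcut avoids quantitative estimates and frequency bookkeeping and makes the extension immediate.
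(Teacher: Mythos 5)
Your argument is correct, but it takes a genuinely different route from the paper. The paper argues by the classification of isolated singularities of a holomorphic section: since $l$ is holomorphic on $\Sigma'$ and $\bar z^K l$ is smooth, $l$ satisfies a bound $|l| \leq C|z|^{-K}$ near any $x\in\Sigma_s$, which rules out an essential singularity; moreover, if $l$ had a pole of order $n\geqslant 1$, the term $\bar z^K z^{-n}$ in $\bar z^K l$ would fail to be $C^\infty$ at $x$, so the pole is excluded as well, leaving only a removable singularity. Your proof instead performs the direct computation $\partial_{\bar z}^K(\bar z^K A) = K!\,A$ (valid since $\partial_{\bar z}A = 0$ on the punctured disk), reads off the smooth extension of $A$ as $\tfrac{1}{K!}\partial_{\bar z}^K g$, and observes that the extension is holomorphic by continuity of $\partial_{\bar z}A$. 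Your version buys a more self-contained argument that avoids invoking Riemann's removable singularity theorem or the Casorati--Weierstrass dichotomy; the paper's version is shorter and leans on standard complex-analytic machinery. Both are complete; the only small point worth being explicit about is that the local matrix representative $A(z)$ must be taken with respect to a \emph{holomorphic} frame of $\hom(L_{<0}, L_{\geqslant 0})$ (as you do), so that the equation $\nabla_{\bar z} l = 0$ from Proposition~\ref{antisymmetric:prop} translates into $\partial_{\bar z}A = 0$.
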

\begin{proof}
The statement is a consequence of two previous propositions. On one hand, $l$ is holomorhic, i.e. it can only have removable singularities, poles or essential singularities on $\Sigma_s$. On the other hand $\bar z^K l$ is smooth, so $l$ can not have either poles or essential singularities. Thus, all singularities are removable.
\end{proof}
\begin{remark}
This proof is reminiscent of~\cite[Lemma 2.9]{LW}.
\end{remark}

In the following proofs we will often check certain equalities on $\Sigma'$, where we can use the explicit expressions for local sections $f_{-p}$, and then conclude the equality at singular points by continuity.


\begin{proposition}
\label{1isotropic:prop}
For all $p\geqslant 0$ one has that $(l_{-p}(\cdot),\cdot)$ is a zero section of $(L^*_{-p})^2$.
\end{proposition}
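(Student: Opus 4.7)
The plan is to prove, by induction on $p$, that $(V_{-p}, f_{-p}) = 0$ on $\Sigma'$; by the smooth extension of $l$ across $\Sigma_s$ established in the previous proposition and by continuity, this will suffice to show the vanishing of the section $(l_{-p}(\cdot),\cdot)$ on all of $\mathbb S^2$.

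The first step is a reduction. Since $L_{<0}$ is isotropic (being the image of a twistor lift), the bilinear pairing $(\pi_{<0}V_{-p}, f_{-p})$ vanishes, so
$$(l_{-p}(f_{-p}), f_{-p}) = (\pi_{\geqslant 0}V_{-p}, f_{-p}) = (V_{-p}, f_{-p}).$$
For the base case $p=0$, $f_0=\Phi$ and $V_0=V$ is real with $V\cdot\Phi=0$, so $(V_0,f_0)=0$. For the inductive step, substituting the recursion gives
$$(V_{-p-1},f_{-p-1}) = -\tfrac{1}{\gamma_{-p-1}}\bigl[(\partial_{\bar z}V_{-p},f_{-p-1}) + \hat\gamma_{-p-1}(f_{-p-1},f_{-p-1})\bigr] = -\tfrac{1}{\gamma_{-p-1}}(\partial_{\bar z}V_{-p},f_{-p-1}),$$
where the $\hat\gamma$-term vanishes by isotropy of $L_{<0}$ (so that $(f_{-p-1},f_{-p-1})=0$). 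Applying Leibniz in reverse, $(\partial_{\bar z}V_{-p},f_{-p-1}) = \partial_{\bar z}(V_{-p},f_{-p-1}) + \gamma_{-p-2}(V_{-p},f_{-p-2})$, so the diagonal identity alone is insufficient: off-diagonal pairings $(V_{-p},f_{-q})$ with $q>p$ must be controlled.

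To close the induction, I would strengthen the claim to the full skew-symmetry
$$B_{p,q} := (V_{-p},f_{-q}) + (V_{-q},f_{-p}) = 0 \quad \text{for all } p,q \geqslant 0,$$
proved by induction on $p+q$. The key computation: differentiating $B_{p-1,q}=0$ (inductive hypothesis) in $\bar z$, using $\partial_{\bar z}V_{-p'+1} = -\gamma_{-p'}V_{-p'} - \hat\gamma_{-p'}f_{-p'}$ together with $\partial_{\bar z}f_{-q'} = -\gamma_{-q'-1}f_{-q'-1}$, and observing that every $(f_{-a},f_{-b})$ term vanishes by isotropy of $L_{<0}$, one derives the recurrence
$$\gamma_{-p}B_{p,q} + \gamma_{-q-1}B_{p-1,q+1} = 0.$$
Iterating collapses each $B_{p,q}$ (with $p+q=N$) to a nonzero scalar multiple of $B_{0,N}$, so the whole induction reduces to the one-parameter family of base identities $B_{0,n}=0$, $n\geqslant 0$. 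The case $n=0$ is immediate from $V\perp\Phi$; for $n\geqslant 1$, one uses the Jacobi equation $\partial_z\partial_{\bar z}V_0 = -\gamma_{-1}V_0 - \hat\gamma_{-1}f_0$ and the recursive definition of $V_{-n}$ and $\hat\gamma_{-n}$ to express $B_{0,n}$ as a telescoping combination that collapses using the symmetry $\gamma_k = \gamma_{-k-1}$ of the harmonic sequence (a consequence of $\gamma_0=\gamma_{-1}$ together with the Toda relations $\partial_z\partial_{\bar z}\ln\gamma_p = \gamma_{p+1}-2\gamma_p+\gamma_{p-1}$).

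The main obstacle is the careful bookkeeping in establishing $B_{0,n}=0$: this is where both the Jacobi equation (which controls the $L_0$-projection of the second derivative of $V_0$) and the precise tuning of the definitions of $\hat\gamma_{-p}$ enter essentially. Once all $B_{p,q}=0$ are established, the diagonal $p=q$ gives exactly $(V_{-p},f_{-p})=0$, completing the proof after extension across $\Sigma_s$ by continuity.
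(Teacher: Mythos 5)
Your reduction to checking $(V_{-p},f_{-p})=0$ on $\Sigma'$ (using isotropy of $L_{<0}$ to identify $(l_{-p}(f_{-p}),f_{-p})$ with $(V_{-p},f_{-p})$), your base case $p=0$, and your derivation of the recurrence $\gamma_{-p}B_{p,q}+\gamma_{-q-1}B_{p-1,q+1}=0$ (given $B_{p-1,q}=0$) are all correct and match computations appearing in the paper or in its Corollary~\ref{2isotropy:cor}. The gap is in closing the induction.

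The recurrence only relates the quantities $B_{p,q}$ with $p+q=N$ to one another: once the level-$(N-1)$ hypothesis is in place, it collapses all $B_{p,q}$ at level $N$ to scalar multiples of a single unknown, which you take to be $B_{0,N}$. But it gives no independent vanishing of any of them, so the argument is circular: for $N=2p$ the diagonal quantity $B_{p,p}=2(V_{-p},f_{-p})$, which is precisely the statement to be proved at level $p$, is tied to $B_{0,N}$ by the very recurrence you wrote down. Already for $N=2$ one obtains $\gamma_{-1}B_{1,1}+\gamma_{-2}B_{0,2}=0$, a single relation between two unknowns, with nothing more coming from differentiating the recursive formulas. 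Your sketch that $B_{0,n}=0$ follows ``by a telescoping combination that collapses using $\gamma_k=\gamma_{-k-1}$'' is not substantiated, and I do not see how to make it work by elementary manipulation of the $V_{-p}$'s alone; the Jacobi equation you cite is already built into the definitions of $\hat\gamma_{-p}$ and $V_{-p}$ and does not supply new information here.

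The paper's proof supplies precisely the missing input, and it is of a different nature from pointwise algebra. From the off-diagonal identity $(f_{-p},V_{-p-1})+(f_{-p-1},V_{-p})=0$ (obtained, as you do, by differentiating the diagonal inductive hypothesis) one shows that $H(\cdot,\cdot)=(l_{-p-1}(\cdot),\cdot)$ is an anti-holomorphic section of $(L^*_{-p-1})^{\otimes 2}$; since $c_1(L_{-p-1})<0$ by the ramification inequality~\eqref{ramification}, this line bundle admits no nonzero anti-holomorphic global sections, forcing $H\equiv 0$. This holomorphicity-plus-degree argument is what breaks the circularity and gives the diagonal vanishing; the full skew-symmetry $B_{p,q}=0$ you are aiming for is then deduced in Corollary~\ref{2isotropy:cor} using the diagonal and near-diagonal cases as anchors for the recurrence you derived. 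Some global obstruction of this sort is needed to complete your proof; a purely local telescoping argument cannot produce it.
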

\begin{proof}
The proof is by induction on $p$. The base $p=0$ is by definition since $V_0\perp f_0$.
For $p>0$ one has that $(l_{-p}(\cdot),\cdot) = (l(\cdot),\cdot)$, i.e. it is defined on $\mathbb{S}^2$ and it sufficient to 
check the equality for $f_{-p}$, i.e. that $ (f_{-p},V_{-p}) = 0. $

Suppose that $(f_{-p},V_{-p})=0$. Differentiating it with respect to $\bar z$ yields
$$
(-\gamma_{-p-1}f_{-p-1}, V_{-p}) + (f_{-p}, -\gamma_{-p-1}V_{-p-1}-\hat\gamma_{-p-1}f_{-p-1})=0.
$$
For all $p\geqslant 0$ one has $(f_{-p},f_{-p-1}) = 0$ and, thus,  
\begin{equation}
\label{eq1}
(f_{-p},V_{-p-1}) + (f_{-p-1},V_{-p})=0.
\end{equation}

Let $H(\cdot,\cdot) = (l_{-p-1}(\cdot),\cdot) = (l(\cdot),\cdot)$, we claim that $H$ is an anti-holomorphic section of $(L_{-p-1}^2)^*$. Indeed,
\begin{equation*}
\begin{split}
&(\nabla_z H)(f_{-p-1},f_{-p-1}) \\
&= \frac{\partial}{\partial z}(V_{-p-1},f_{-p-1}) -(l_{-p-1}(\nabla_z^{L_{-p-1}}f_{-p-1}),f_{-p-1}) - (l(f_{-p-1}), \nabla_z^{L_{-p-1}}f_{-p-1}) \\
&=(V_{-p} + \frac{\partial}{\partial z}\left(\ln|f_{-p-1}|^2\right) V_{-p},f_{-p-1}) +  (V_{-p-1}, f_{-p} + \frac{\partial}{\partial z}\left(\ln|f_{-p-1}|^2\right)f_{-p-1}) \\
&-2\frac{\partial}{\partial z}\left(\ln|f_{-p-1}|^2\right)(V_{-p-1},f_{-p-1}) = 0,
\end{split}
\end{equation*}
where in the last equality we used~\eqref{eq1}.

At the same time, according to~\eqref{ramification}, for $p\geqslant0$ one has $c_1(L_{-p-1}) \leqslant -2(p+1)<0$, i.e. $(L^*_{-p-1})^2$ does not have non-zero anti-holomorphic sections. Therefore $H\equiv 0$ and the proof is complete. 
\end{proof}

\begin{corollary}
\label{2isotropy:cor}
For all $p,q\geqslant 0$ and for all $v\in L_{-p}$, $w\in L_{-q}$ one has that
$$
(l_{-p}(v),w)+ (v, l_{-q}(w)) = 0.
$$
In particular, $l\in\hom^s(L_{<0},L_{\geqslant 0})$.
\end{corollary}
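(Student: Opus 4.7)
The plan is to reduce the skew-symmetry identity to a purely numerical statement about the local sections $f_{-p}$ of the harmonic sequence and the lifted vector fields $V_{-p}$, and then to establish it by induction on $|p-q|$ starting from Proposition~\ref{1isotropic:prop}, via a first-order identity obtained by differentiating in $\bar z$.

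First, by $\mathcal O$-linearity in each slot it suffices to verify the identity at $v = f_{-p}$, $w = f_{-q}$ on the dense open set $\Sigma'$, and to extend by continuity. The key algebraic fact is that the bilinear pairing on $\mathbb{C}^{2m+1}$ satisfies $(L_a,L_b)\neq 0 \Rightarrow a+b=0$; hence for $p,q\geqslant 0$ the $L_{<0}$-component of $V_{-p}$ pairs trivially with $f_{-q}\in L_{-q}$, so
$$
(l_{-p}(f_{-p}),f_{-q}) \;=\; (\pi_{\geqslant 0}V_{-p},f_{-q}) \;=\; (V_{-p},f_{-q}).
$$
Setting $H(p,q):=(V_{-p},f_{-q})+(f_{-p},V_{-q})$, which is manifestly symmetric in $p,q$, the desired identity becomes $H(p,q)=0$ for all $p,q\geqslant 0$. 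The base case $H(p,p)=2(V_{-p},f_{-p})=2(l_{-p}(f_{-p}),f_{-p})=0$ is exactly Proposition~\ref{1isotropic:prop}.

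Next I would differentiate $H(p,q)$ in $\bar z$ using~\eqref{harmseq:eq} for $\partial_{\bar z}f_{-p}$ together with the identity
$$
\partial_{\bar z}V_{-p} \;=\; -\gamma_{-p-1}V_{-p-1} - \hat\gamma_{-p-1}f_{-p-1},
$$
which is simply the rearranged definition of $V_{-p-1}$ and holds for all $p\geqslant 0$. The crucial cancellation is that the cross terms $(f_{-p-1},f_{-q})$ and $(f_{-p},f_{-q-1})$ both vanish by the same isotropy statement, so the $\hat\gamma$-contributions drop out entirely. Collecting the remaining terms gives the clean recursion
$$
\partial_{\bar z}H(p,q) \;=\; -\gamma_{-p-1}H(p+1,q) \;-\; \gamma_{-q-1}H(p,q+1),
$$
so whenever $H(p,q)\equiv 0$ on $\Sigma'$, one has $\gamma_{-p-1}H(p+1,q) + \gamma_{-q-1}H(p,q+1) = 0$ there as well.

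Finally I would induct on $k=|p-q|$. The base $k=0$ is handled. For $k\geqslant 1$, assuming vanishing for all differences below $k$ and taking $q=p+k$, apply the recursion to the (vanishing) pair $(p,q-1)$: one of the resulting terms is $H(p+1,q-1)$, which has difference $|k-2|<k$ and therefore vanishes by the inductive hypothesis, forcing $\gamma_{-q}H(p,q)=0$. Since $\gamma_{-q}$ is non-vanishing on $\Sigma'$, this gives $H(p,q)=0$ there, and continuity extends the identity across the finite set $\Sigma_s$. The original claim follows for all $v\in L_{-p}$, $w\in L_{-q}$, and the restriction to $p,q\geqslant 1$ is the skew-symmetry $l\in\hom^s(L_{<0},L_{\geqslant 0})$. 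The main point where care is required is the cancellation of the $\hat\gamma$ and cross terms in the recursion step; once that is in place, the induction itself is purely formal.
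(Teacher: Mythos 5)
Your proof is essentially the same in its main ingredients as the paper's: you define the same symmetric bilinear form $H(p,q)$, deduce vanishing on the diagonal from Proposition~\ref{1isotropic:prop}, and then differentiate $H(p,q)$ in $\bar z$ to get the identical recursion $\partial_{\bar z}H(p,q) = -\gamma_{-p-1}H(p+1,q) - \gamma_{-q-1}H(p,q+1)$, the cross terms dropping because of isotropy (the pairing $(\cdot,\cdot)$ is nondegenerate only between $L_a$ and $L_{-a}$). Where you differ is in how the induction is organized. The paper inducts on $p+q$, using \emph{two} anchors ($H_{p,p}=0$ from Proposition~\ref{1isotropic:prop} and $H_{p,p+1}=0$ from equation~\eqref{eq1} in that proposition's proof) and observing that within each level $p+q=N+1$ the recursion makes all the $H_{p',q'}$ proportional by nowhere-zero factors, so one zero kills them all. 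You induct on $|p-q|$, using only the diagonal anchor, which is slightly more economical.

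There is, however, a small slip at the bottom of your induction. At $k=1$ you take $q=p+1$ and apply the recursion to $(p,q-1)=(p,p)$; the two resulting terms are $H(p+1,p)$ and $H(p,p+1)$, \emph{both} of difference $1=k$, so the claimed inequality $|k-2|<k$ fails at $k=1$ and the inductive hypothesis does not directly dispatch $H(p+1,q-1)$. This is easy to patch: since $H$ is symmetric in $p,q$ and $\gamma_{-p-1}=\gamma_{-q}$ in this case, the recursion collapses to $0 = -2\gamma_{-p-1}H(p,p+1)$, which still gives the conclusion. For $k\geqslant 2$ your argument is correct as written. So the proof is sound once the $k=1$ step is handled via symmetry (or, alternatively, by invoking equation~\eqref{eq1} directly, as the paper does).
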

\begin{proof}
Let $H_{p,q}$ be a bilinear form on $L_{-p}\otimes L_{-q}$ defined by $H_{q,p}(v,w)=H_{p,q}(v,w)=(l_{-p}(v),w)+ (v, l_{-q}(w))$. The proposition asserts that $H_{p,q}$ are all identically zero. By Proposition~\ref{1isotropic:prop} for all $p\geqslant 0$ one has $H_{p,p}=0$. Moreover, by equality~\eqref{eq1} in the proof of Proposition~\ref{1isotropic:prop} one also has $H_{p,p+1}=0$. 

We prove the assertion by induction on $p+q$. If $p+q=0$, then $p=q=0$ and $H_{0,0}=0$, so the base is proved.
Assume that $H_{p,q}=0$ for all $p+q=N$.

For any such $p,q$ applying $\frac{\partial}{\partial \bar z}$ to $H_{p,q}(f_{-p},f_{-q})$ yields,
\begin{equation*}
\begin{split}
0 = -\gamma_{-p-1} H_{p+1,q}(f_{-p-1},f_{-q}) -\gamma_{-q-1} H_{p,q+1}(f_{-p},f_{-q-1}).
\end{split}
\end{equation*}  
Moreover, $\gamma_i\ne 0$ on $\Sigma'$. Thus, on $\Sigma'$ all functionals $H_{p',q'}$ with $p'+q'=N+1$ can be obtained from one another by a multiplication by a nowhere zero function. However, by the discussion at the beginning of the proof, at least one of these functionals is zero, and, therefore, all of them are.
\end{proof}

This proposition allows us to prove Proposition~\ref{infinitesimal isometry}.
\begin{proof}[Proof of Proposition~\ref{infinitesimal isometry}] 
It is sufficient to prove the statement for $M=\mathbb{S}^2$. Indeed, if $M=\mathbb{RP}^2$ one can lift $V$ to an even Jacobi field and apply the proposition to the lift.

The condition in the proposition is equivalent to $\hat\gamma_{-1}=\hat\gamma_0=0$. Thus, by~\eqref{gammahat:def}, $\hat\gamma_{-p} = 0$ for all $p$. 

For any complex local coordinate $z$ define a local section $A\in\hom(\underline{\mathbb{C}}^n,\underline{\mathbb{C}}^n)$ on $\Sigma'$ in the following way. On $L_{\leqslant 0}$ we set $Af_{-p} = V_{-p}$ and for $v\in L_{\geqslant 0}$ we set $Av = \overline{A\bar v}$. Since $V_0$ is real, the definition is consistent on $L_0$. Furthermore, since $\hat\gamma_{-p}=0$ we see that the sequence $\{V_{-p}\}$ satisfy the exact same differential equations as $\{f_{-p}\}$. As a result, the definition of $A$ does not depend on the choice of a local complex coordinate.

 We claim that $A$ is constant, i.e. $\frac{\partial}{\partial z} A=\frac{\partial}{\partial \bar z} A=0$. Indeed, if $p>0$, then
\begin{equation*}
\begin{split}
\left(\frac{\partial}{\partial z} A\right)(f_{-p})& = \frac{\partial}{\partial z} (A f_{-p}) - A\left(\frac{\partial}{\partial z} f_{-p}\right) \\
&= \frac{\partial}{\partial z} V_{-p} - A\left(f_{-p+1} +\frac{\partial}{\partial z}\ln|f_{-p}|^2f_{-p}\right) = 0,
\end{split}
\end{equation*}
by Proposition~\ref{dzV} since $\hat\gamma_{-p} = 0$ for all $p$. Similarly, if $q\geqslant 0$, then
\begin{equation*}
\begin{split}
\left(\frac{\partial}{\partial \bar z} A\right)(f_{-q})& = \frac{\partial}{\partial \bar z} (A f_{-q}) - A\left(\frac{\partial}{\partial \bar z} f_{-q}\right) \\
&= \frac{\partial}{\partial \bar z} V_{-q} - A(-\gamma_{-q-1}f_{-q-1}) = 0,
\end{split}
\end{equation*}
by definition of $V_{-q-1}$. Conjugating the two previous computations yields the claim that $A$ is constant.

Finally we prove that $A\in\mathfrak{so}(2m+1)$. Note that $Af_0\perp f_0$ by definition. Furthermore, for $p,q\geqslant 0$ one has $(Af_{-p},f_{-q}) = (l_{-p}(f_{-p}),f_{-q})$. Therefore by Corollary~\ref{2isotropy:cor} one has 
$(Af_{-p},f_{-q}) + (f_{-p},Af_{-q})=0$. Taking conjugate yields $(Af_{p},f_{q}) + (f_{p},Af_{q})$. Finally, we show that 
\begin{equation}
\label{eqA}
(Af_{-p},f_{q}) + (f_{-p},Af_{q})=0.
\end{equation}
First of all, we already proved~\eqref{eqA} for any $q$ and $p=0$. Thus, it is sufficient to show~\eqref{eqA} for $p\geqslant 1$.
We show it by induction on $q$. The base $q=0$ is already established. Suppose~\eqref{eqA} is proved for $q$. 
To show it for $q+1$ we apply $\frac{\partial}{\partial z}$ to both sides of~\eqref{eqA},
\begin{equation*}
\begin{split}
&\left(A\frac{\partial f_{-p}}{\partial z}, f_q\right) + \left(Af_{-p}, \frac{\partial f_q}{\partial z}\right) + \left(\frac{\partial f_{-p}}{\partial z}, Af_q\right) + \left(f_{-p}, A\frac{\partial f_{q}}{\partial z}\right) \\
&=\left(Af_{-p}, \frac{\partial f_q}{\partial z}\right) + \left(f_{-p}, A\frac{\partial f_{q}}{\partial z}\right) = (Af_{-p},f_{q+1}) + (f_{-p},Af_{q+1}),
\end{split}
\end{equation*}
where we used the step of induction twice. As a result, we have that $A$ is skew symmetric with respect to $(\cdot,\cdot)$ on $\mathbb{C}^{2m+1}$. Thus, its restriction to $\mathbb{R}^{2m+1}$ is an element of $\mathfrak{so}(2m+1)$.
\end{proof}

For any section $s\in \hom(L_{<0},L_{\geqslant 0})$ we define its vertical part $s^V$ and horizontal part $s^H$ to be   $s^H,s^V\in\hom(L_{<0},L_{\geqslant 0})$ such that $s^V=\pi_{>0}s$, $s^H = \pi_0 s$. This corresponds to taking vertical and horizontal parts in $T^{(1,0)}\mathcal Z_m$.

\begin{proposition}
\label{lala:prop}
One has
$$
(\nabla_z l)^V = l_0\circ\partial_{z,-1}\circ\pi_{-1}.
$$
\end{proposition}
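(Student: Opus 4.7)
Since both sides are sections of $\hom(L_{<0}, L_{>0})$ (the image of the right-hand side lies in $L_{>0}$ because $V \perp \Phi$ forces $\pi_0 V_0 = 0$, hence $l_0(f_0) = \pi_{\geqslant 0} V_0 = \pi_{>0} V_0$), and the statement is local, I will verify it by evaluating on the local holomorphic sections $f_{-p}$ of $L_{-p}$, $p = 1, \ldots, m$, on $\Sigma'$. The desired identity then splits into two cases: $p \geqslant 2$, where $\pi_{-1} f_{-p} = 0$ and so the right-hand side vanishes, versus $p = 1$.

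The whole computation rests on two basic facts about how $\partial_z$ interacts with the harmonic decomposition, both immediate from the first line of \eqref{harmseq:eq}: first, $\partial_z L_p \subset L_p \oplus L_{p+1}$, which gives $\partial_z L_{\geqslant 0} \subset L_{\geqslant 0}$; and second, $\partial_z L_{<0} \subset L_{\leqslant 0}$, so in particular $\pi_{>0}\partial_z\pi_{<0} = 0$. The first of these also means that the induced Hermitian connection $\nabla_z^{L_{\geqslant 0}}$ agrees with the trivial derivative $\partial_z$ on sections of $L_{\geqslant 0}$.

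Using the definition of the induced connection on $\hom(L_{<0}, L_{\geqslant 0})$ together with $l(f_{-p}) = \pi_{\geqslant 0} V_{-p}$, one obtains
\[
(\nabla_z l)(f_{-p}) = \partial_z(\pi_{\geqslant 0} V_{-p}) - l(\nabla_z^{L_{<0}} f_{-p}).
\]
Writing $\pi_{\geqslant 0} V_{-p} = V_{-p} - \pi_{<0} V_{-p}$ and applying $\pi_{>0}$, the second fact above kills the $\pi_{<0} V_{-p}$ contribution, so $\pi_{>0}\partial_z(\pi_{\geqslant 0} V_{-p}) = \pi_{>0}\partial_z V_{-p}$, which Proposition~\ref{dzV} evaluates as $\pi_{>0} V_{-p+1} + (\partial_z\ln|f_{-p}|^2)\,\pi_{>0}V_{-p}$ (the $f_{-p}$ term drops out since $f_{-p} \in L_{<0}$). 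Separately, using the first line of \eqref{harmseq:eq} and projecting, $\nabla_z^{L_{<0}} f_{-p} = f_{-p+1} + (\partial_z\ln|f_{-p}|^2) f_{-p}$ if $p \geqslant 2$, while $\nabla_z^{L_{<0}} f_{-1} = (\partial_z\ln|f_{-1}|^2) f_{-1}$ because $f_0 \notin L_{<0}$.

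Combining these pieces, for $p \geqslant 2$ all terms cancel and $\pi_{>0}(\nabla_z l)(f_{-p}) = 0$, matching the right-hand side. For $p = 1$ what survives is $\pi_{>0}(\nabla_z l)(f_{-1}) = \pi_{>0} V_0$; on the right-hand side, $\partial_{z,-1} f_{-1} = f_0$ by \eqref{harmseq:eq}, so $l_0(\partial_{z,-1} f_{-1}) = l_0(f_0) = \pi_{\geqslant 0} V_0 = \pi_{>0} V_0$ by $V \perp \Phi$, completing the check. The main subtlety is purely bookkeeping with the three projections $\pi_{>0}$, $\pi_0$, $\pi_{<0}$; the mismatch between the two cases $p=1$ and $p\geqslant 2$ is exactly what forces the appearance of the single term $l_0\circ\partial_{z,-1}\circ\pi_{-1}$ on the right-hand side, and there is no hidden analytic difficulty since the key cancellations all follow from the structure of the harmonic sequence.
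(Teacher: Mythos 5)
Your proof is correct and follows essentially the same route as the paper: verify on the local frame sections $f_{-p}$ over $\Sigma'$, exploit $\partial_z L_p\subset L_p\oplus L_{p+1}$ so that $\pi_{>0}\partial_z\pi_{\geqslant 0}=\pi_{>0}\partial_z$, plug in Proposition~\ref{dzV}, and separate $p\geqslant 2$ from $p=1$ according to whether $f_{-p+1}$ lies in $L_{<0}$ or not. Your write-up is in fact slightly cleaner than the paper's (which in the $p=1$ case has a small typo, writing $l_0(V_0)$ where $l_0(f_0)$ is meant), and your opening observation that the right-hand side lands in $L_{>0}$ because $V_0\perp\Phi$ is a nice sanity check.
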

\begin{proof}
We check the statement for $f_{-p}$ on $\Sigma'$ and then use continuity to conclude it on $\Sigma$. We treat cases $p=1$ and $p>1$ separately.

{\bf Case $p>1$}. In this case $\nabla^{L_{<0}}_zf_{-p} = \frac{\partial}{\partial z}f_{-p}$ and one has,
\begin{equation*}
\begin{split}
(\nabla_z l)^V(f_{-p}) = \pi_{>0}\frac{\partial}{\partial z}(\pi_{\geqslant 0} V_{-p}) - \pi_{>0}l\left(\frac{\partial}{\partial z}f_{-p}\right).
\end{split}
\end{equation*}
Furthermore, since $\frac{\partial}{\partial z} L_{-p}\subset L_{-p}\oplus L_{-p+1}$ one has that $ \pi_{>0}\frac{\partial}{\partial z}\pi_{\geqslant 0} =  \pi_{>0}\frac{\partial}{\partial z}$ and therefore,
\begin{equation*}
\begin{split}
(\nabla_z l)^V(f_{-p}) &= \pi_{>0}\left(V_{-p+1} + \left(\frac{\partial}{\partial z}\ln |f_{-p}|^2\right)V_{-p} -\frac{\partial}{\partial z}
\left(\sum_{i=-p}^{-1}\frac{\hat\gamma_{i}}{\gamma_{i}}\right)f_{-p}\right) \\ 
&-\pi_{>0}\left(V_{-p+1} + \left(\frac{\partial}{\partial z}\ln |f_{-p}|^2\right)V_{-p} -\frac{\partial}{\partial z}\right)=0.
\end{split}
\end{equation*}

{\bf Case $p=1$}. In this case $\nabla^{L_{<0}}_zf_{-1} = \left(\frac{\partial}{\partial z}\ln|f_{-1}|^2\right)f_{-1}$ and computations similar to the previous one yield
\begin{equation*}
\begin{split}
(\nabla_z l)^V(f_{-1}) &= \pi_{>0}\left(V_0 + \frac{\partial}{\partial z}\ln|f_{-1}|^2V_{-1}\right) - \pi_{>0}\left(\left(\frac{\partial}{\partial z}\ln|f_{-1}|^2\right)V_{-1}\right) \\ &= \pi_{>0}(V_0) = l_0(V_0) = l_0(\partial_{z,-1}(f_{-1})).
\end{split}
\end{equation*}
\end{proof}

Furthermore, let $l^*\in \hom(L_{\leqslant 0}, L_{>0})$ be the adjoint to $l$ with respect to the bilinear form $(\cdot,\cdot)$. 

\begin{proposition}
\label{adjoint:prop}
The adjoint $l^*$ satisfies the following properties
\begin{itemize}
\item[1)] $l^*|_{L_{<0}} = - l^V$;
\item[2)] $l^*|_{L_0} = -l_0$;
\item[3)] $V_0 =-2\Re(l^*(f_0))$;
\item[4)] $(\nabla_z l)^V = -l^*\circ\partial_{-1}(\partial_z)\circ\pi_{-1}$
\end{itemize}
\end{proposition}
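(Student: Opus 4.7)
The plan is to derive all four claims mechanically from the $(\cdot,\cdot)$-antisymmetry of $l$ established in Corollary~\ref{2isotropy:cor}, combined with Proposition~\ref{lala:prop}, after pinning down the correct duality structure. The first observation I would make is that since $\bar L_p = L_{-p}$ and the line bundles $L_p$ are mutually Hermitian-orthogonal, the complex bilinear form $(\cdot,\cdot)$ on $L_p\otimes L_q$ vanishes unless $q=-p$; consequently $L_0$ is $(\cdot,\cdot)$-self-dual, while $L_{<0}$ and $L_{>0}$ are dual to each other. This forces the adjoint $l^*\in\hom(L_{\leqslant 0},L_{>0})$ to be uniquely characterized by $(l(v),w)=(v,l^*(w))$ for $v\in L_{<0}$ and $w\in L_{\leqslant 0}$, and also explains why the target of $l^*$ is $L_{>0}$ as stated rather than all of $L_{\leqslant 0}$.

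For part 1), I would fix $v\in L_{<0}$ and test against an arbitrary $w\in L_{<0}$. Corollary~\ref{2isotropy:cor} gives $(l(w),v)+(w,l(v))=0$, while the definition of the adjoint reads $(l(w),v)=(w,l^*(v))$. Because the $L_0$-component of $l(v)$ does not pair with anything in $L_{<0}$, only the vertical part $l^V(v)=\pi_{>0}l(v)$ contributes, yielding $(w,\,l^*(v)+l^V(v))=0$ for every $w\in L_{<0}$; non-degeneracy of the $L_{<0}$--$L_{>0}$ pairing concludes $l^*|_{L_{<0}}=-l^V$. For part 2), I would instead apply Corollary~\ref{2isotropy:cor} with $q=0$ and $p\geqslant 1$ to get $(l(v),f_0)+(v,l_0(f_0))=0$ for every $v\in L_{<0}$; comparing with the adjoint identity $(l(v),f_0)=(v,l^*(f_0))$ forces $l^*(f_0)=-l_0(f_0)$, once I remark that $l_0(f_0)=\pi_{\geqslant 0}V_0$ actually lies in $L_{>0}$ because $V_0\cdot\Phi=0$ kills the $L_0$-component, so both sides of the equality indeed live in $L_{>0}$ and non-degeneracy can be applied.

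Parts 3) and 4) should then be immediate consequences of part 2). For part 3), that part identifies $l^*(f_0)=-\pi_{>0}V_0$; since $V_0$ is a real field with vanishing $L_0$-component, the conjugation symmetry $\overline{L_{>0}}=L_{<0}$ gives $\pi_{<0}V_0=\overline{\pi_{>0}V_0}$, so $V_0=2\Re(\pi_{>0}V_0)=-2\Re(l^*(f_0))$. For part 4), I would substitute $l_0=-l^*|_{L_0}$ from part 2) directly into the formula $(\nabla_z l)^V=l_0\circ\partial_{z,-1}\circ\pi_{-1}$ supplied by Proposition~\ref{lala:prop}, which yields the claimed identity. I do not expect a genuine obstacle in this proposition: the entire argument is essentially bookkeeping once the pairing structure is made explicit, and the only care required is to track the summand in which each adjoint lands, which the relation $\bar L_p=L_{-p}$ handles automatically.
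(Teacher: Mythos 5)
Your proposal is correct and follows essentially the same route as the paper: items 1) and 2) are read off from Corollary~\ref{2isotropy:cor} via the bilinear pairing, item 3) is the same conjugation computation, and item 4) is obtained by substituting item 2) into Proposition~\ref{lala:prop}. You merely make explicit the duality bookkeeping (why $l^*$ lands in $L_{>0}$, why $l_0(f_0)$ has no $L_0$-component, non-degeneracy of the $L_{<0}$--$L_{>0}$ pairing) that the paper leaves as ``follow directly from Corollary~\ref{2isotropy:cor}.''
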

\begin{proof}
Items $1)$ and $2)$ follow directly from Corollary~\ref{2isotropy:cor}.

To prove $3)$ we note that $V_0 = (\pi_{>0} + \pi_{<0}) V_0$. At the same time $L_{>0} = \overline{L_{<0}}$ and $V_0$ is real. Therefore, $\pi_{<0}V_0 =\overline{(\pi_{>0}V_0)}$ and 
$$
V_0 = \pi_{>0}(V_0) + \overline{(\pi_{>0}V_0)} = 2\Re (\pi_{>0}(V_0)) = -2\Re(l^*(f_0)).
$$

Item $4)$ follows from Proposition~\ref{lala:prop} and item $2)$.
\end{proof}

\subsection{Twistor fields}
\label{1twistor:sec}
Motivated by the contents of the previous section we propose the following definition.
\begin{definition}
Given a linearly full harmonic map $\Phi\colon\mathbb{S}^2\to\mathbb{S}^{2m}$ with a twistor lift $\Psi\colon\mathbb{S}^2\to \mathcal Z_m$ we say that a section $l$ of $\Psi^*T^{(1,0)}\mathcal Z_m$ (or, equivalently, a section of $\hom^s (L_{<0},L_{\geqslant 0})$) is a {\em twistor field} along $\Psi$ if,
\begin{itemize}
\item[1)] $l$ is holomorphic, and
\item[2)] $(\nabla_z l)^V = -l^*\circ\partial_{z,-1}\circ\pi_{-1}$.
\end{itemize}
\end{definition}
\begin{remark}
Our concept of twistor field coincides with the concept of {\em infinitesimal holomorphic horizontal deformation} defined in~\cite{LW}, see the discussion after Proposition~\ref{twistoradjoint:prop}.
\end{remark}

Let us summarize the contents of the previous section using the language of twistor fields.
\begin{proposition}
Let $\Phi$ be linearly full harmonic map $\Phi\colon\mathbb{S}^2\to\mathbb{S}^{2m}$ with a twistor lift $\Psi\colon\mathbb{S}^2\to \mathcal Z_m$. Then there is a first-order twistor lift map $\mathcal T$, $V\mapsto l$ from the space of Jacobi fields along $\Phi$ to the space of twistor fields along $\Psi$ with a left inverse $\mathcal I$ given by $l\mapsto -2\Re(l^*(\Phi))$.
\end{proposition}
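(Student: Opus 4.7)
The plan is to assemble this proposition directly from the results already established in Section~\ref{jacobi:sec}. All the substantive analytic work—extending $l$ across higher singularities, verifying holomorphicity, and checking the structural identities—has been carried out; this proposition is essentially a repackaging in the language of twistor fields. So the proof is short and organizational.

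First I would define $\mathcal T(V) := l$ using the inductive construction of $V_{-p}$ and $\hat\gamma_{-p}$ from Section~\ref{jacobi:sec}, and then argue that $l$ is in fact a twistor field. Holomorphicity of $l$ on $\Sigma'$ is the content of the second assertion of Proposition~\ref{antisymmetric:prop} ($\nabla_{\bar z} l = 0$), and the extension of $l$ across the discrete set $\Sigma_s$ of higher singularities is precisely the proposition proved with the auxiliary bound $\bar z^K l$ smooth. The fact that $l$ takes values in $\hom^s(L_{<0}, L_{\geq 0})$, i.e.\ is skew-symmetric with respect to $(\cdot,\cdot)$ (so that it indeed defines a section of $\Psi^* T^{(1,0)}\mathcal Z_m$ under the identification $T^{(1,0)}\mathcal Z_m \cong \hom^s(L, L^\perp)$), is Corollary~\ref{2isotropy:cor}. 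Finally, the second defining condition of a twistor field, $(\nabla_z l)^V = -l^*\circ \partial_{z,-1}\circ\pi_{-1}$, is exactly item $4)$ of Proposition~\ref{adjoint:prop}.

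Next I would verify the left-inverse assertion $\mathcal I\circ\mathcal T = \mathrm{id}$. By construction $l(f_0) = \pi_{\geqslant 0}V_0$ and by item $2)$ of Proposition~\ref{adjoint:prop} one has $l^*|_{L_0} = -l_0$, so $l^*(\Phi) = -\pi_{>0}(V_0)$; hence $-2\Re(l^*(\Phi)) = 2\Re(\pi_{>0}V_0) = V_0$ by item $3)$ of the same proposition, since $V_0$ is real and $L_{>0} = \overline{L_{<0}}$. This is exactly $\mathcal I(\mathcal T(V)) = V$, so $\mathcal I$ is a left inverse on the image of $\mathcal T$.

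Since every step is an immediate citation of a result already proved, there is no real obstacle; the only mildly subtle point is to notice that $\mathcal I$ is defined on \emph{all} twistor fields (not just those in the image of $\mathcal T$), which is why the statement asserts only that $\mathcal I$ is a left inverse rather than a two-sided inverse. The question of whether $\mathcal T$ is surjective—equivalently, whether every twistor field arises from a Jacobi field—is exactly the integrability/moduli-space question alluded to in Remark~\ref{Jacobi:remark}, and is not claimed here.
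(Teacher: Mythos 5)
Your proof is correct and matches the paper's approach exactly: the paper gives no separate proof of this proposition, framing it explicitly as a summary of Section~\ref{jacobi:sec}, and your citations are precisely the right ones --- Proposition~\ref{antisymmetric:prop} for holomorphicity on $\Sigma'$, the subsequent extension result across $\Sigma_s$, Corollary~\ref{2isotropy:cor} for skew-symmetry, and items 2)--4) of Proposition~\ref{adjoint:prop} for the second twistor condition and the left-inverse computation.

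Two small points. First, a trivial slip: you wrote ``$l(f_0) = \pi_{\geqslant 0}V_0$,'' but $l$ is defined only on $L_{<0}$ while $f_0\in L_0$; this should read $l_0(f_0) = \pi_{\geqslant 0}V_0$ (your next step, invoking $l^*|_{L_0}=-l_0$, is already phrased correctly). Second, and more substantively, your closing paragraph misidentifies why only a left inverse is asserted. It is not because surjectivity of $\mathcal T$ is the open integrability question of Remark~\ref{Jacobi:remark}. The paper proves, in the immediately following propositions culminating in Theorem~\ref{1stordertwistor:thm}, that $\mathcal T$ \emph{is} a bijection between Jacobi fields and twistor fields --- by showing that $\mathcal I$ is well-defined on all twistor fields (i.e.\ maps them to Jacobi fields) and is injective. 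That first-order correspondence is a different question from integrability, which asks whether a Jacobi field exponentiates to an actual one-parameter family of harmonic maps; the latter is genuinely subtle and not resolved by Theorem~\ref{1stordertwistor:thm}. The proposition asserts only a left inverse at this stage simply because the well-definedness and injectivity of $\mathcal I$ have not yet been established.
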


The goal of this section is to show that the first order twistor lift is a 1-to-1 correspondence. We will achieve it two steps: first we show that the left inverse $\mathcal I$ is in fact well-defined, i.e. for any twistor field $l$, $\mathcal I(l)$ is a Jacobi field. Second, we show that $\mathcal I$ is injective. We start with some general properties of twistor fields.

\begin{proposition}
\label{twistoradjoint:prop}
The field $l$ is twistor iff $l^*\in\hom(L_{\leqslant 0},L_{>0})$ satisfies the following properties,
\begin{itemize}
\item[1)] $l^*$ is holomorphic and
\item[2)] $(\nabla_z l^*)|_{L<0} = \partial_{z,0}\circ l^H$.
\end{itemize}
\end{proposition}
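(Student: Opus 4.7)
I would prove the ``iff'' by separately establishing the two equivalences: (a) $\nabla_{\bar z}l=0$ iff $\nabla_{\bar z}l^*=0$, and (b) the twistor equation $(\nabla_z l)^V=-l^*\circ\partial_{z,-1}\circ\pi_{-1}$ is equivalent to $(\nabla_z l^*)|_{L_{<0}}=\partial_{z,0}\circ l^H$.

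For (a), the key ingredient is that the bilinear form $(\cdot,\cdot)$ is parallel for the flat connection $D$ on the trivial bundle $\underline{\mathbb{C}}^{2m+1}$. I would differentiate the defining identity $(l^*(v),w)=(v,l(w))$, valid for $v\in L_{\leqslant 0}$ and $w\in L_{<0}$, in the $\bar z$-direction, and decompose each ambient derivative using $D_{\bar z}L_p\subset L_p\oplus L_{p-1}$. The off-diagonal contributions $\pi_0 D_{\bar z}l^*(v)$ and $\pi_{-1}D_{\bar z}l(w)$ land in $L_0$ and $L_{-1}$ respectively, both of which pair trivially with the remaining factor (recall $w\in L_{<0}$ and $v\in L_{\leqslant 0}$). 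After those terms vanish and one cancels the contributions involving $l^*(D_{\bar z}v)$ and $l(D_{\bar z}w)$ using the adjoint relation itself, what remains is the pointwise identity
\[((\nabla_{\bar z}l^*)(v),w)=(v,(\nabla_{\bar z}l)(w)).\]
Non-degeneracy of $(\cdot,\cdot)$ between $L_{>0}$ and $L_{<0}$, and between $L_{\leqslant 0}$ and $L_{\geqslant 0}$, then yields the equivalence of holomorphicities.

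For (b), the plan is a direct computation exploiting $l^*|_{L_{<0}}=-l^V$ from Proposition~\ref{adjoint:prop}. Two discrepancies must be accounted for. First, while $D_{\bar z}$ preserves $L_{<0}$, the operator $D_z$ does not: for $v\in L_{-1}$ one has $\pi_0D_zv=\partial_{z,-1}(\pi_{-1}v)\ne 0$, which produces an extra term $-l^*(\partial_{z,-1}(\pi_{-1}v))$ in $(\nabla_z l^*)(v)$ beyond what one computes from $l^V$ alone. Second, since $D_zL_0\subset L_0\oplus L_1$, differentiating the $L_0$-valued part $l^H$ of $l$ and projecting to $L_{>0}$ contributes $\partial_{z,0}\circ l^H$, accounting for the identity $(\nabla_z l)^V=\nabla_z l^V+\partial_{z,0}\circ l^H$ that relates the vertical part of the connection on $\hom(L_{<0},L_{\geqslant 0})$ to the connection on $\hom(L_{<0},L_{>0})$. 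Assembling these gives
\[(\nabla_z l^*)|_{L_{<0}}=-(\nabla_z l)^V+\partial_{z,0}\circ l^H-l^*\circ\partial_{z,-1}\circ\pi_{-1},\]
from which both directions of the equivalence in (b) follow immediately.

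The main obstacle is purely bookkeeping: the decomposition $\underline{\mathbb{C}}^{2m+1}=\bigoplus L_p$ is not $D$-parallel, and each ambient derivative produces off-diagonal pieces $\partial_{z,p}$ or $\bar\partial_{\bar z,p}$. The whole computation rests on the two containments $D_zL_p\subset L_p\oplus L_{p+1}$ and $D_{\bar z}L_p\subset L_p\oplus L_{p-1}$; once these are in hand, tracking which terms survive which projections and bilinear pairings becomes mechanical.
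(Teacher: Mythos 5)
Your proposal is correct. For part (a) your argument is essentially the one in the paper: both of you differentiate the adjoint identity $(l(v),w)=(v,l^*(w))$ in $\bar z$, kill the off-diagonal pieces by observing they land in the wrong summand for the bilinear pairing, and conclude by non-degeneracy of $(\cdot,\cdot)$ between $L_{<0}$ and $L_{>0}$.

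For part (b) your route is a genuine, if modest, variation. The paper establishes the equivalence by a pairing computation: it shows $\bigl((\nabla_z l)(v),u\bigr) + \bigl((l^*\circ\partial_{z,-1}\circ\pi_{-1})(v),u\bigr) = \bigl(v,(\nabla_z l^*)(u)\bigr) - \bigl(v,(\partial_{z,0}\circ l^H)(u)\bigr)$ for $u,v\in L_{<0}$ and then invokes non-degeneracy twice. You instead invoke the skew-symmetry of $l\in\hom^s(L_{<0},L_{\geqslant 0})$ in the form $l^*|_{L_{<0}}=-l^V$ (Proposition~\ref{adjoint:prop}(1), valid for any section of $\hom^s$) and derive the operator-level identity
\[
(\nabla_z l^*)|_{L_{<0}}=-(\nabla_z l)^V+\partial_{z,0}\circ l^H-l^*\circ\partial_{z,-1}\circ\pi_{-1},
\]
tracking precisely the two off-diagonal corrections you describe, namely $\pi_0\frac{\partial}{\partial z}|_{L_{-1}}=\partial_{z,-1}\circ\pi_{-1}$ on the domain side and $\pi_1\frac{\partial}{\partial z}|_{L_0}=\partial_{z,0}$ on the codomain side. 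I verified both the decomposition $(\nabla_z l)^V=\nabla_z(l^V)+\partial_{z,0}\circ l^H$ and the correction $-l^*\circ\partial_{z,-1}\circ\pi_{-1}$ coming from the mismatch between $\nabla^{<0}_z$ and $\nabla^{\leqslant 0}_z$; they are correct, and the resulting identity immediately gives both directions. What your version buys is an explicit pointwise operator identity from which the equivalence is literal substitution rather than an appeal to non-degeneracy, and it makes the role of skew-symmetry transparent. What the paper's version buys is that it never needs Proposition~\ref{adjoint:prop}(1) at all; the pairing argument goes through verbatim. Either is a complete proof.
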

\begin{proof}
Let $v$ be a local section of $L_{<0}$ and $w$ be a local section of $L_{\leqslant 0}$. Then one has
\begin{equation*}
\begin{split}
((\nabla_{\bar z}l)(v),w)=(\nabla_{\bar z}(l(v)),w) - (l(\nabla_{\bar z}v),w) = \left (\frac{\partial}{\partial \bar z}(l(v)),w\right) - \left(l\left(\frac{\partial}{\partial \bar z}v\right),w\right),
\end{split}
\end{equation*}
since $(L_{-1},w)=0$ and $\frac{\partial}{\partial \bar z}v$ is a local section of $L_{<0}$. Continuing the computations, one has
\begin{equation*}
\begin{split}
((\nabla_{\bar z}l)(v),w) &=\frac{\partial}{\partial \bar z}(l(v),w) - \left(l(v), \frac{\partial}{\partial \bar z}w\right) - \left(\frac{\partial}{\partial \bar z}v,l^*(w)\right) \\ 
&= \frac{\partial}{\partial \bar z}(v,l^*(w))-\left(\frac{\partial}{\partial \bar z}v,l^*(w)\right)- \left(v, l^*(\frac{\partial}{\partial \bar z}w)\right) \\ 
&= \left(v, \frac{\partial}{\partial \bar z}(l^*(w))\right)- \left(v, l^*(\frac{\partial}{\partial \bar z}w)\right) = (v,(\nabla_{\bar z}l^*)(w)).
\end{split}
\end{equation*}
Thus, one concludes that $l$ is holomorphic iff $l^*$ is holomorphic.

Let $u,v$ be two local sections of $L_{<0}$. Using similar ideas as before, one has
\begin{equation*}
\begin{split}
&((\nabla_zl)(v),u) + ((l^*\circ\partial_{-1}(\partial_z)\circ\pi_{-1})v,u) = \left(\frac{\partial}{\partial z}(l(v)),u\right) - (l(\nabla^{<0}_z v),u) \\
&+ ((\partial_{z,-1}\circ\pi_{-1})v,l^H(u)) = \frac{\partial}{\partial z}(v,l^*(u)) - \left(l(v), \frac{\partial}{\partial z} u\right) - (\nabla^{<0}_z v,l^*(u)) \\
&+ \left(\frac{\partial}{\partial z} v, l^H(u)\right) = \frac{\partial}{\partial z}(v,l^*(u)) - (v,l^*(\nabla_z^{\leqslant 0} u)) - \left(\frac{\partial}{\partial z}v, l^*(u)\right) \\ 
&- \left(v, \frac{\partial}{\partial z} l^H(u)\right)=(v,(\nabla_z l^*)(u)) - (v,(\partial_{z,0}\circ l^H)(u))
\end{split}
\end{equation*}
\end{proof}

Let us clarify the geometric meaning of condition 2) in the definition of twistor fields. In~\cite{LW} the field $l$ along $\Psi$ is called {\em an infinitesimal horizontal deformation} if there is a family of maps $\Psi_t\to\mathcal Z_m$ such that $\Psi_0=\Psi$, $\frac{d}{dt}\big|_{t=0}\Psi_t =l$ and the vertical part of $d\Psi_t$ is $o(t)$ as $t\to 0$. We claim that this definition is equivalent to condition 2) of Proposition~\ref{twistoradjoint:prop}. Thus, twistor fields are precisely those fields along a horizontal holomorphic map $\Psi$ that preserve the properties of being horizontal and holomorphic up to the first order. In order to prove the claim let $L^t_{<0}$, $L_{>0}^t$ and $L^t_0$ be the subspaces $\Psi_t$, 
$\overline{\Psi}_t$ and $(\Psi_t\oplus\overline{\Psi}_t)^\perp$. The field $l = \frac{d}{dt}\big|_{t=0}\Psi_t$ is an infinitesimal horizontal deformation iff 
$$
\frac{\partial}{\partial z}\left( L^t_{<0} \right)\subset L^t_{<0}\oplus L^t_0 + o(t) = L^t_{\leqslant 0} + o(t).
$$
For $w\in L_{\leqslant 0}$ the value $l^*(w)$ is computed by taking a family $w_t\in L^t_{<0}$ and setting 
$$
l^*(w) = -\pi_{>0}\left(\frac{d}{dt}\Big|_{t=0} w_t\right).
$$ 
Let $v\in L_{<0}$ and let $v_t\in L^t_{<0}$ be a family. Then for the vector $\frac{\partial}{\partial z} v\in L_{\leqslant 0}$ there is a corresponding family $\frac{\partial}{\partial z} v_t$. Note that $\frac{\partial}{\partial z} v_t\in L^t_{\leqslant 0}$ for all $v$ iff $l$ is an infinitesimal horizontal deformation. Set $u_t = \pi^t_{>0}\frac{\partial}{\partial z} v_t$ be the projection onto $L^t_{>0}$, so that 
$$
l^*\left( \frac{\partial}{\partial z} v\right) = -\pi_{>0}\left(\frac{d}{dt}\Big|_{t=0}\left(\frac{\partial}{\partial z} v_t - u_t\right)\right).
$$
Finally we conclude,
\begin{equation*}
\begin{split}
&(\nabla_z l^*)(v) = -\frac{\partial}{\partial z}\pi_{>0}\left(\frac{d}{dt}\Big|_{t=0} v_t\right) +\pi_{>0}\left(\frac{d}{dt}\Big|_{t=0}\left(\frac{\partial}{\partial z} v_t - u_t\right)\right) \\
&= \partial_{z,0}\pi_0\frac{d}{dt}\Big|_{t=0} v_t - \pi_{>0}\left(\frac{d}{dt}\Big|_{t=0} u_t\right) = \partial_{z,0}l^H(v) - \pi_{>0}\left(\frac{d}{dt}\Big|_{t=0} u_t\right).
\end{split}
\end{equation*}
Rearranging the terms yields the claim.

%

\begin{theorem}
If $l$ is a twistor field, then 
 $$
 \pi_{>0}\left (\frac{\partial^2}{\partial z\partial {\bar z}} l^*(f_0)\right) = -\gamma_0 l^*(f_0).
 $$
 In particular, $\mathcal I(l)$ is a Jacobi field.
\end{theorem}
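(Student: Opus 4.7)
The plan is to reduce the Jacobi-field conclusion to the displayed identity, and then establish the identity by a direct local computation in the harmonic-sequence coordinates. For the reduction, I would write $\mathcal I(l) = -l^*(f_0)-\overline{l^*(f_0)}$ and note that the Jacobi equation in conformal coordinates reads $\pi^\perp_0(\partial_z\partial_{\bar z}V + \gamma_0 V)=0$, with $\pi^\perp_0 = \pi_{>0}+\pi_{<0}$. Since $\overline{l^*(f_0)}\in L_{<0}$ and the operator $\partial_z\partial_{\bar z}$ preserves $L_{\leq 0}$ (because $\partial_{\bar z}$ maps $L_{-p}$ into $L_{-p-1}\oplus L_{-p}$ and $\partial_z$ maps $L_{-p}$ into $L_{-p}\oplus L_{-p+1}$), the contribution of $\overline{l^*(f_0)}$ to the $\pi_{>0}$ component vanishes. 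The $\pi_{>0}$ part of the Jacobi equation then reduces to exactly the displayed identity, and the $\pi_{<0}$ part is its complex conjugate.

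To prove the identity, I would expand $l^*(f_0) = \sum_{p\geq 1}\alpha_p f_p$ in the harmonic sequence. The holomorphicity of $l^*$ (condition (1) of Proposition~\ref{twistoradjoint:prop}), applied at $w=f_0$, immediately gives $\pi_{>0}\partial_{\bar z}l^*(f_0) = -\gamma_0 l^*(f_{-1})$, while a direct substitution using $\partial_{\bar z} f_1 = -\gamma_0 f_0$ yields $\pi_0\partial_{\bar z}l^*(f_0) = -\gamma_0\alpha_1 f_0$. Applying $\partial_z$ and projecting to $L_{>0}$, I would then invoke condition (2) of Proposition~\ref{twistoradjoint:prop} to evaluate $\partial_z l^*(f_{-1})$, and use the elementary identity $\gamma_0\,\partial_z\ln|f_{-1}|^2 = -\partial_z\gamma_0$ (immediate from $\gamma_0 = |f_{-1}|^{-2}$) to cancel all terms proportional to $l^*(f_{-1})$. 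After the cancellation, the right-hand side collapses to $-\gamma_0 l^*(f_0) + (\beta-\gamma_0\rho)f_1$, where $\beta = -\gamma_0\alpha_1$ and $\rho$ is defined by $l^H(f_{-1}) = \rho f_0$.

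The main obstacle is showing that this residual $L_1$ term vanishes, and this is the one place where the skew-symmetric structure encoded in $l\in\hom^s(L_{<0},L_{\geq 0})$ is used. I would apply the adjoint relation $(l(v),w) = (v,l^*(w))$ with $v=f_{-1}$ and $w=f_0$: the left-hand side reduces to $\rho$ since $(L_{>0},L_0)=0$, and the right-hand side equals $\alpha_1(f_{-1},f_1)$. Differentiating $(f_{-1},f_0)=0$ in $z$ and using $\partial_z f_0 = f_1$ together with $\partial_z f_{-1} = f_0 + (\partial_z\ln|f_{-1}|^2)f_{-1}$ gives $(f_{-1},f_1)=-1$, hence $\rho = -\alpha_1$. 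This forces $\beta = \gamma_0\rho$, the residual term vanishes, and the identity is proved on $\Sigma'$; by the continuity arguments already established it extends across the higher-singularity set $\Sigma_s$.
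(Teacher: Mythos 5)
Your proof is correct and rests on exactly the same three ingredients as the paper's: the holomorphicity of $l^*$, the horizontality condition $(\nabla_z l^*)|_{L_{<0}} = \partial_{z,0}\circ l^H$, and the skew-adjointness of $l$ used to kill the residual term. The organizational difference is that the paper decomposes $\pi_{>0}\partial_z\partial_{\bar z}l^*(f_0)$ abstractly via covariant derivatives on $L_{>0}$ and exhibits the cancellation by pairing with $f_0$, while you expand $l^*(f_0)=\sum_{p\geqslant 1}\alpha_p f_p$ on $\Sigma'$ and track the $L_1$-coefficient explicitly, reducing the cancellation to $\rho=-\alpha_1$. Your computation of $(f_{-1},f_1)=-1$ by differentiating $(f_{-1},f_0)=0$ in $z$ plays the same role as the paper's differentiation of $(l^*(f_0),f_0)=0$ in $\bar z$: each is a way of invoking the adjoint relation, and the two statements $\pi_0\partial_{\bar z}l^*(f_0)+l^H(\partial_{\bar z}f_0)=0$ (paper) and $\alpha_1+\rho=0$ (you) are identical after substituting $\pi_0\partial_{\bar z}l^*(f_0)=-\gamma_0\alpha_1 f_0$ and $l^H(\partial_{\bar z}f_0)=-\gamma_0\rho f_0$. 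Your version makes the role of the single coefficient $\alpha_1$ transparent and spells out the identity $\gamma_0\,\partial_z\ln|f_{-1}|^2=-\partial_z\gamma_0$, which the paper uses silently in simplifying $l^*(\partial_z\partial_{\bar z}f_0)$; the paper's version avoids choosing trivializing sections at the cost of a slightly more opaque cancellation. Both are sound, and you correctly note that the argument on $\Sigma'$ extends across $\Sigma_s$ by continuity.
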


\begin{proof}
First, we claim that since $l^*(f_0)\in L_{>0}$ one has that  
$$
\pi_{>0}\left (\frac{\partial^2}{\partial z\partial {\bar z}}l^*(f_0) \right) = \nabla_z^{>0}\nabla_{\bar z}^{>0}l^*(f_0) - \partial_{z,0}\pi_0\frac{\partial}{\partial \bar z}l^*(f_0).
$$
Then, by properties $1)$ and $2)$ of the twistor field (since $\frac{\partial}{\partial\bar z}f_0\in L_{<0}$), one has
$$
\nabla_z^{>0}\nabla_{\bar z}^{>0}l^*(f_0) = \nabla_z^{>0} l^*\left(\frac{\partial}{\partial\bar z}f_0\right) = l^*\left(\frac{\partial^2}{\partial z\partial {\bar z}} f_0\right) + \partial_{z,0}l^H\left(\frac{\partial}{\partial\bar z}f_0\right).
$$

Finally, we claim that $\partial_{z,0}\pi_0\frac{\partial}{\partial \bar z}l^*(f_0) + \partial_{z,0}l^H(\frac{\partial}{\partial\bar z}f_0)=0$. Indeed, it is sufficient to show $\pi_0\frac{\partial}{\partial \bar z}l^*(f_0)+l^H(\frac{\partial}{\partial\bar z}f_0)=0$. Since both terms lie in $L_0$, it is sufficient to pair it with $f_0$.
\begin{equation*}
\begin{split}
 &\left(\pi_0\frac{\partial}{\partial \bar z}\rho^*(f_0), f_{0}\right) = \left(\frac{\partial}{\partial \bar z}\rho^*(f_0), f_0\right) = -\left(f_0, \rho^H\left(\frac{\partial}{\partial \bar z} f_0\right)\right).
\end{split}
\end{equation*}


Finally, the statement about $\mathcal I$ follows easily from Lemma~\ref{Jacobi:lemma}.

\end{proof}

\begin{proposition}
One has $\ker(\mathcal I)=0$.
\end{proposition}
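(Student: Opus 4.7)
The plan is to reduce the statement to $l^* = 0$ via the non-degeneracy of the bilinear form, and then to show that $l^*$ vanishes on the dense open set $\Sigma'$ by using the holomorphicity of $l^*$ together with the harmonic sequence formulae.

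Suppose $\mathcal{I}(l) = -2\Re(l^*(f_0)) = 0$, i.e.\ $l^*(f_0) + \overline{l^*(f_0)} = 0$. Since $l^*(f_0)\in L_{>0}$ and $\overline{l^*(f_0)}\in L_{<0}$ and the subbundles $L_{>0}$ and $L_{<0}$ have trivial intersection, this forces $l^*(f_0) = 0$ on all of $\mathbb{S}^2$. I would then prove by induction on $p\geqslant 0$ that $l^*(f_{-p}) = 0$ on $\Sigma'$. The base $p=0$ is established. For the inductive step, Proposition~\ref{twistoradjoint:prop} gives $\nabla_{\bar z}l^* = 0$, which means
\[
\nabla^{L_{>0}}_{\bar z}(l^*(f_{-p})) \;=\; l^*\bigl(\nabla^{L_{\leqslant 0}}_{\bar z} f_{-p}\bigr).
\]
By~\eqref{harmseq:eq} we have $\tfrac{\partial}{\partial\bar z} f_{-p} = -\gamma_{-p-1} f_{-p-1}$, and since $f_{-p-1}\in L_{-p-1}\subset L_{\leqslant 0}$ this vector is unchanged by $\pi_{\leqslant 0}$. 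Combined with the induction hypothesis $l^*(f_{-p})=0$, this yields $\gamma_{-p-1}\,l^*(f_{-p-1}) = 0$ on $\Sigma'$, and since $\gamma_{-p-1}$ is nowhere vanishing there, $l^*(f_{-p-1})=0$ on $\Sigma'$. The induction terminates at $p = m$, since the harmonic sequence terminates.

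As $\{f_{-p}\}_{p=0}^m$ locally frames $L_{\leqslant 0}$ on $\Sigma'$, we conclude $l^*\equiv 0$ on $\Sigma'$. The section $l^*$ extends smoothly across $\Sigma_s$ (this follows from the analogous extension of $l$ proved earlier), and $\Sigma'$ is dense, so $l^*\equiv 0$ on $\mathbb{S}^2$. Finally, the defining adjoint relation $(l(v),w) = -(v,l^*(w))$ for $v\in L_{<0}$, $w\in L_{\leqslant 0}$ combined with the non-degeneracy of the pairing $(\cdot,\cdot)\colon L_{\geqslant 0}\times L_{\leqslant 0}\to\mathbb{C}$ — which holds because $\overline{L_i}=L_{-i}$ makes $(L_i,L_{-i})$ non-degenerate while $(L_i,L_j)=0$ for $i+j\neq 0$ — forces $l(v)=0$ for all $v$, so $l=0$.

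The only step that is not entirely formal is the induction, and the main subtlety is bookkeeping with the different connections: one must interpret $\nabla_{\bar z}l^* = 0$ as an identity in $\hom(L_{\leqslant 0},L_{>0})$ with the induced connection, and use that $\nabla^{L_{\leqslant 0}}_{\bar z} f_{-p}$ agrees with the ambient derivative $\tfrac{\partial}{\partial\bar z} f_{-p}$ precisely because the latter already lies in $L_{\leqslant 0}$. Once this is in place the argument is a short inductive unravelling.
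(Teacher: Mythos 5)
Your argument is correct and is essentially the paper's proof phrased on the adjoint: you show $l^*(f_{-p})=0$ by propagating vanishing along the harmonic sequence via holomorphicity of $l^*$ and $\gamma_{-p-1}\neq 0$ on $\Sigma'$, while the paper shows the equivalent vanishing $(w,l(v))=0$ for $w\in L_{-p}$, $v\in L_{<0}$ via holomorphicity of $l$ and surjectivity of $\bar\partial_{\bar z,-p}$ on $\Sigma'$ — the same induction seen from opposite sides of the adjoint pairing. You also spell out the preliminary reduction from $\mathcal I(l)=0$ to $l^*(f_0)=0$ using $L_{>0}\cap L_{<0}=0$, a step the paper leaves implicit.
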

\begin{proof}
Let $l$ be a twistor field such that $l^*(f_0) = 0$. Then $l^H = 0$ and the holomorphic nature of $l$ implies that for any local section $v$ of $L_{<0}$ one has $\frac{\partial}{\partial \bar z} l(v) = l\left(\frac{\partial}{\partial \bar z}\right)$.

We prove by induction on $p$ that for any local section $w$ on $L_{-p}$ one has $(w,l(v)) = 0$. The base $p=0$ is equivalent to $l^H=0$. To prove the step of the induction, we apply $\frac{\partial}{\partial \bar z}$ to $(w,l(v)) = 0$ to obtain
$$
0= \left(\frac{\partial}{\partial \bar z}w, l(v)\right) + \left(w, l\left(\frac{\partial}{\partial \bar z} v\right)\right) = (\bar\partial_{\bar z,p} w, l(v)),
$$
where we used the step of the induction in the last equality. Since $\bar\partial_{\bar z,p}$ is an isomorphism on $\Sigma'$, continuity implies that $(w',l(v))=0$ for any local section $w'\in \Gamma(L_{-p-1})$.
\end{proof}

As a result, we arrive at the following,
\begin{theorem}
\label{1stordertwistor:thm}
The linear map $\mathcal T$ is an isomorphism between Jacobi fields along a linearly full harmonic map $\Phi\colon \mathbb{S}^2\to\mathbb{S}^{2m}$ and twistor fields along its twistor lift $\Psi\colon \mathbb{S}^2\to \mathcal Z_m$.
\end{theorem}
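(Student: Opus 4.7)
The plan is to show that Theorem~\ref{1stordertwistor:thm} follows as a formal consequence of all the structural results already established about $\mathcal T$ and $\mathcal I$. Nearly all of the analytic work has been carried out: we have a well-defined linear map $\mathcal T\colon V\mapsto l$ from Jacobi fields along $\Phi$ to twistor fields along $\Psi$ (the target lies in the twistor fields by Proposition~\ref{antisymmetric:prop}, Corollary~\ref{2isotropy:cor}, and Proposition~\ref{lala:prop} combined with Proposition~\ref{adjoint:prop}~(4)); and we have the linear map $\mathcal I\colon l\mapsto -2\Re(l^*(\Phi))$, which lands in Jacobi fields by the theorem stating that $\pi_{>0}(\partial^2 l^*(f_0)/\partial z\partial\bar z)=-\gamma_0 l^*(f_0)$ together with Lemma~\ref{Jacobi:lemma}. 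What remains is purely algebraic.

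First I would observe that $\mathcal I\circ\mathcal T=\mathrm{id}$. This is essentially the content of Proposition~\ref{adjoint:prop}~(3): if $l=\mathcal T(V)$ is obtained from a Jacobi field $V=V_0$ via the recursive construction of $\{V_{-p}\}$, then by definition $l_0(f_0)=\pi_{\geqslant 0}V_0$ and hence $l^*(f_0)=-\pi_{>0}V_0$, so
\[
\mathcal I(\mathcal T(V))=-2\Re(l^*(f_0))=2\Re(\pi_{>0}V_0)=V_0=V,
\]
where the penultimate equality uses that $V_0$ is a real vector field and $L_{>0}=\overline{L_{<0}}$. In particular $\mathcal T$ is injective and $\mathcal I$ is surjective.

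Second, I would invoke the proposition $\ker(\mathcal I)=0$ proved just above the statement. Combined with surjectivity of $\mathcal I$, this gives that $\mathcal I$ is a linear bijection between twistor fields and Jacobi fields. Finally, to deduce surjectivity of $\mathcal T$, take any twistor field $l$ and set $V=\mathcal I(l)$; then
\[
\mathcal I(\mathcal T(\mathcal I(l)))=(\mathcal I\circ\mathcal T)(V)=V=\mathcal I(l),
\]
and injectivity of $\mathcal I$ forces $\mathcal T(\mathcal I(l))=l$. Hence $\mathcal T\circ\mathcal I=\mathrm{id}$ as well, and $\mathcal T$ is the two-sided inverse of $\mathcal I$; in particular it is a linear isomorphism.

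The main conceptual obstacle, already overcome in the preceding subsections, was showing that the recursively defined objects $\{V_{-p}\}$, $\{\hat\gamma_{-p}\}$ and the section $l$ are globally well-defined — both across branch points $\Sigma_s$ (handled by the holomorphicity of $l$ together with the pole estimate via $\bar z^K l$) and independently of a local complex coordinate (handled by Proposition~\ref{antisymmetric:prop}). Once these facts and the injectivity of $\mathcal I$ are in hand, the statement of Theorem~\ref{1stordertwistor:thm} reduces to the elementary bijectivity argument above, so the proof at this stage is only a few lines long.
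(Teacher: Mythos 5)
Your proposal is correct and takes essentially the same approach the paper implicitly follows: the paper declares its two-step plan ("first we show that the left inverse $\mathcal I$ is in fact well-defined \dots Second, we show that $\mathcal I$ is injective") and then states the theorem without spelling out the concluding algebra, which you have filled in accurately, including the correct use of Proposition~\ref{adjoint:prop}~(3) for $\mathcal I\circ\mathcal T=\mathrm{id}$ and the bijectivity bookkeeping.
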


\subsection{Action of $SO(2m+1,\mathbb{C})$} 
\label{soaction:sec}
The group $SO(2m+1,\mathbb{C})$ naturally acts on the twistor space $\mathcal Z_m$ by sending an isotropic subspace $V$ to an isotropic subspace $AV$. The action is holomorphic and preserves horizontal distribution. As a result, for any horizontal holomorphic map $\Psi\colon\mathbb{S}^2\to\mathcal Z_m$ and any $A\in SO(2m+1,\mathbb{C})$, the map $A\Psi$ is horizontal and holomorphic. The action on a linearly full harmonic map $\Phi\colon\mathbb{S}^2\to\mathbb{S}^{2m}$ is defined by the corresponding action on its twistor lift $\Psi\colon \mathbb{S}^2\to\mathcal Z_m$, i. e. $A\Phi = \pi_mA\Psi$. Note that $\Phi$ is linearly full iff $A\Phi$ is linearly full.

For $A\in SO(2m+1,\mathbb{C})$ the differential of the action defines the map $A_*\colon \Psi^*T^{(1,0)}\mathcal Z_m \to (A\Psi)^*T^{(1,0)}\mathcal Z_m$. In this section we prove that $A_*$ maps twistor fields to twistor fields and since $(A_*)^{-1}=(A^{-1})_*$, the application of Theorem~\ref{1stordertwistor:thm} implies Proposition~\ref{invariance:prop}.

Let $\{L_i\}$ be a harmonic sequence associated to linearly full map $\Phi$ and let $\{L^A_i\}$ be a harmonic sequence associated to $A\Phi$. We set $L^A_{<0} = \bigoplus_{i=-m}^{-1} L^A_i$ and, similarly use notations $L^A_{>0}$, $L^A_{\geqslant 0}$ and $L^A_{\leqslant 0}$ for objects constructed from the sequence $\{L^A_i\}$. By the exact form of the twistor correspondence, one has that $AL_{<0} = L^A_{<0}$. Since $\overline{L_{>0}}= L_{<0}$ and   
$\overline{L^A_{>0}}= L^A_{<0}$, one also has $\overline{A}L_{>0} = L^A_{>0}$. Furthermore, one has $AL_{\leqslant 0}\perp \overline{A}L_{>0}$ and, therefore, $AL_{\leqslant 0} = L^A_{\leqslant 0}$.

%
%

The map $A_*$ is easy to describe using the identification $\Psi^*T^{(1,0)}\mathcal Z_m$ with $\hom^s(L_{<0},L_{\geqslant 0})$. Since $AL_{<0} = L^A_{<0}$, under this identification $(A\Psi)^*T^{(1,0)}\mathcal Z_m$ becomes $\hom^s(AL_{<0}, (AL_{<0})^{\perp})$ and the map $A_*$ acts as
$$
(A_*l)(Av) = \pi^A_{\geqslant 0}(A(l(v))),
$$
where $\pi^A_{\geqslant 0}$ is the projection onto $L^A_{\geqslant 0}=(AL_{<0})^{\perp}$. The following proposition is a consequence of the fact that $SO(2m+1,\mathbb{C})$ acts holomorphically on $\mathcal Z_m$. We give a proof for completeness.
\begin{proposition}
The map $A_*$ preserves holomorphic fields.
\end{proposition}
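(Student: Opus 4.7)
The plan is to verify directly that $\nabla^A_{\bar z}(A_* l) = 0$ whenever $\nabla_{\bar z} l = 0$, exploiting three structural facts established or recalled above: (i) $A$ is a constant matrix on the trivial bundle $\mathbb{S}^2\times\mathbb{C}^{2m+1}$, so it commutes with $\frac{\partial}{\partial\bar z}$; (ii) $AL_{<0} = L^A_{<0}$; and (iii) $\frac{\partial}{\partial\bar z}$ preserves the subbundle $L^A_{<0}$, which follows from the harmonic-sequence identity $\frac{\partial f^A_{-p}}{\partial\bar z} = -\gamma^A_{-p-1} f^A_{-p-1}$ together with the termination $L^A_{-m-1} = 0$.

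For a local section $v$ of $L_{<0}$ I would compute $(\nabla^A_{\bar z}(A_* l))(Av)$ directly from the definitions. Since $\frac{\partial(Av)}{\partial\bar z} = A\frac{\partial v}{\partial\bar z}$ already lies in $L^A_{<0}$ by (i)--(iii), the induced $\bar z$-connection on $L^A_{<0}$ simply returns this vector, so
$$
(\nabla^A_{\bar z}(A_* l))(Av) = \pi^A_{\geqslant 0}\frac{\partial}{\partial\bar z}\bigl(\pi^A_{\geqslant 0}(A\cdot l(v))\bigr) - \pi^A_{\geqslant 0}\bigl(A\cdot l(\tfrac{\partial v}{\partial\bar z})\bigr).
$$
The next step is to erase the inner projection $\pi^A_{\geqslant 0}$: dropping it alters the argument of $\frac{\partial}{\partial\bar z}$ by a section of $L^A_{<0}$, which by (iii) remains in $L^A_{<0}$ after differentiation and is therefore killed by the outer $\pi^A_{\geqslant 0}$. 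Using (i) to pull $A$ out of $\frac{\partial}{\partial\bar z}$ then yields
$$
(\nabla^A_{\bar z}(A_* l))(Av) = \pi^A_{\geqslant 0}\Bigl(A\cdot\bigl(\tfrac{\partial}{\partial\bar z}l(v) - l(\tfrac{\partial v}{\partial\bar z})\bigr)\Bigr).
$$
By the holomorphicity of $l$, the bracket lies in $L_{<0}$, and after applying $A$ it lies in $AL_{<0} = L^A_{<0}$ by (ii), which is the kernel of $\pi^A_{\geqslant 0}$. Hence the right-hand side vanishes.

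The only real subtlety is that the Hermitian orthogonal projections $\pi_{\geqslant 0}$ and $\pi^A_{\geqslant 0}$ are \emph{not} intertwined by $A$, since $A\in SO(2m+1,\mathbb{C})$ is not unitary in general; a naive attempt to push $\pi^A_{\geqslant 0}$ across $A$ would fail. The argument above bypasses this by working throughout modulo $L^A_{<0}$ and never moving $\pi^A_{\geqslant 0}$ across $A$; the only properties of $\pi^A_{\geqslant 0}$ that intervene are the description of its kernel and the invariance (iii) of that kernel under $\frac{\partial}{\partial\bar z}$.
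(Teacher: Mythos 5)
Your argument is correct and follows essentially the same route as the paper's proof: both erase the inner projection using $\pi^A_{\geqslant 0}\frac{\partial}{\partial\bar z}\pi^A_{\geqslant 0}=\pi^A_{\geqslant 0}\frac{\partial}{\partial\bar z}$, commute the constant matrix $A$ past $\frac{\partial}{\partial\bar z}$, and invoke holomorphicity of $l$ together with $AL_{<0}=L^A_{<0}=\ker\pi^A_{\geqslant 0}$ to kill the error term. Your closing remark that $A$ does not intertwine the Hermitian projections is a helpful clarification of why one must argue modulo $L^A_{<0}$ rather than conjugating projections, but the underlying computation is the same.
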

\begin{proof}
Let $l$ be a holomorphic section of $\hom^s(L_{<0},L_{\geqslant 0})$ and $v$ be a local section of $L_{<0}$. Recall that $\pi^A_{\geqslant 0}\frac{\partial }{\partial \bar z}\pi^A_{\geqslant 0} = \pi^A_{\geqslant 0}\frac{\partial }{\partial \bar z}$. Then one has
\begin{equation*}
\begin{split}
&\nabla_{\bar z} ((A_*l)(Av)) = \pi^A_{\geqslant 0}\frac{\partial }{\partial \bar z}\pi^A_{\geqslant 0}(Al(v)) =  \pi^A_{\geqslant 0}\frac{\partial }{\partial \bar z}(Al(v)) = \pi^A_{\geqslant 0}A\frac{\partial }{\partial \bar z}l(v) \\
&= \pi^A_{\geqslant 0}Al\left(\frac{\partial }{\partial \bar z}v\right) = (A_*l)\left(A\frac{\partial }{\partial \bar z}v\right) = (A_*l)\left(\frac{\partial }{\partial \bar z}Av\right),
\end{split}
\end{equation*}
where in the third and the last equality we used that $A$ is a constant matrix, so it commutes with taking the derivatives.
\end{proof}

\begin{proposition}
The map $A_*$ preserves twistor fields.
\end{proposition}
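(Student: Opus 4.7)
The plan is to verify condition 2) of the twistor field definition for $A_*l$, since the previous proposition has already established that $A_*$ preserves holomorphicity. Concretely, I must check the identity
\begin{equation*}
(\nabla_z (A_*l))^V = -(A_*l)^* \circ \partial^A_{z,-1} \circ \pi^A_{-1},
\end{equation*}
where $\partial^A$ and $\pi^A_{-1}$ are the structures attached to the harmonic sequence of $A\Phi$.

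The first preparatory step is to derive a formula for $(A_*l)^*$ in terms of $l^*$. Since $A \in SO(2m+1,\mathbb{C})$ preserves the bilinear form $(\cdot,\cdot)$, and since $AL_{<0}=L^A_{<0}$ and $AL_{\leqslant 0}=L^A_{\leqslant 0}$, a short computation gives, for $v \in L_{<0}$ and $w \in L_{\leqslant 0}$,
\begin{equation*}
((A_*l)(Av),Aw) = (Al(v),Aw) = (l(v),w) = (v, l^*(w)) = (Av, Al^*(w)),
\end{equation*}
where the first equality uses that $Aw \in L^A_{\leqslant 0}$ is bilinearly orthogonal to $L^A_{<0}$, so the Hermitian projection $\pi^A_{\geqslant 0}$ in the definition of $A_*l$ may be dropped on the pairing. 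Decomposing $Al^*(w)$ into $L^A_i$-components and using the same bilinear orthogonality against $Av \in L^A_{<0}$ yields
\begin{equation*}
(A_*l)^*(Aw) = \pi^A_{>0}(Al^*(w)).
\end{equation*}

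With this formula in hand, the second step is to compute $(\nabla_z(A_*l))^V$ directly, by unfolding $(A_*l)(Au)=\pi^A_{\geqslant 0}(Al(u))$ and exploiting that $A$ is a constant matrix, so it commutes with $\partial/\partial z$. One then applies Proposition~\ref{lala:prop} to the original field $l$ to rewrite $(\nabla_z l)^V$ as $l_0 \circ \partial_{z,-1}\circ \pi_{-1}$, and combines the intertwining formula from Step 1 with $A$-equivariance of $(\cdot,\cdot)$ to match the two sides of the required identity. The main obstacle is that, although $AL_{<0}=L^A_{<0}$ and $AL_{\leqslant 0}=L^A_{\leqslant 0}$, the intermediate spaces $AL_0$ and $AL_{\geqslant 0}$ need not coincide with $L^A_0$ and $L^A_{\geqslant 0}$, since $A$ preserves only the bilinear and not the Hermitian form; consequently the Hermitian projections $\pi^A_{>0}$ and $\pi^A_0$ do not commute with $A$ through $\pi_{>0}$ and $\pi_0$, and the bookkeeping between the two decompositions $\bigoplus L^A_i$ and $A\bigoplus L_i$ must be routed through the bilinear-orthogonality relation $L^A_{<0}\perp L^A_{\leqslant 0}$, which is exactly what makes the identification in Step 1 possible.
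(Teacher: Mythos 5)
Your plan follows the paper's proof closely: you establish the intertwining formula $(A_*l)^*(Aw)=\pi^A_{>0}(Al^*(w))$ first, then verify condition~2) by unfolding $\nabla^A_z$ and commuting the constant matrix $A$ with $\partial/\partial z$, and you correctly identify that the central difficulty is that $A$ preserves only the bilinear form, so $\pi^A_0,\pi^A_{>0}$ need not commute with $A$ through $\pi_0,\pi_{>0}$. Two points need attention.

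First, the appeal to Proposition~\ref{lala:prop} is a slip: that statement concerns the specific $l$ built from a Jacobi field, where $l_0$ makes sense. For an arbitrary twistor field there is no $l_0$; the correct input is condition~2) of the definition itself, $(\nabla_z l)^V=-l^*\circ\partial_{z,-1}\circ\pi_{-1}$ (equivalently, item~4) of Proposition~\ref{adjoint:prop}).

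Second, the phrase ``combines the intertwining formula with $A$-equivariance to match the two sides'' hides where the real work is. The paper needs two concrete identities to carry out the bookkeeping you allude to: the quantitative failure of $A$ to commute with the projection,
$$
\pi^A_{<0}\tfrac{\partial}{\partial z}Au - A\pi_{<0}\tfrac{\partial}{\partial z}u
= A\partial_{z,-1}\pi_{-1}u - \partial^A_{z,-1}\pi^A_{-1}Au,
$$
and the vanishing $\pi^A_{>0}\,A\,l^H=0$, which follows from $AL_{\leqslant 0}=L^A_{\leqslant 0}$. The latter is what lets one replace $A_*l$ by $(A_*l)^V=-(A_*l)^*|_{L^A_{<0}}$ when composing with the $L^A_{<0}$-projection of the derivative, and the former is what converts the resulting extra term into exactly $-(A_*l)^*\circ\partial^A_{z,-1}\circ\pi^A_{-1}$. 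Without tracking this discrepancy explicitly, the two sides of condition~2) for $A_*l$ do not visibly coincide, so I would not regard the sketch as complete until these are written out.
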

\begin{proof}
Since $A_*$ preserves holomorphic fields, it is sufficient to check that the second condition in the definition of twistor fields is preserved. We split the proof into several steps.

First we establish the expression for $(A_*l)^*$. The claim is that for all local sections $w$ of $L_{<0}$ one has $(A^*l)^*(Aw) = \pi^A_{> 0}Al^*(w)$. Indeed, let $u$ be a local section of $L_{\leqslant 0}$. Then one has,
\begin{equation*}
\begin{split}
&((A^*l)^*(Aw), Au) = (Aw, (A_*l)(Au)) = (Aw,\pi^A_{>0}Al(u)) = (Aw, Al(u)) \\
&= (w,l(u)) = (l^*(w), u) = (Al^*(w), Au) = (\pi^A_{>0}Al^*(w), Au).
\end{split}
\end{equation*}

Let $u$ be a local section of $L_{<0}$. Then one has 
\begin{equation*}
\begin{split}
&\pi^A_{<0}\frac{\partial}{\partial z}Au - A\pi_{<0}\frac{\partial}{\partial z}u = \left(\frac{\partial}{\partial z}Au - \partial^A_{z,-1}\pi^A_{-1}Au\right) \\
&- \left( A\frac{\partial}{\partial z}u - A\partial_{z,-1}\pi_{-1}u\right) = A\partial_{z,-1}\pi_{-1}u - \partial^A_{z,-1}\pi^A_{-1}Au,
\end{split}
\end{equation*}
where we continue to use the superscript $A$ to denote objects associated to $A\Phi$.

Finally, we are in position to complete the proof. Suppose that $l$ satisfies the second condition in the definition in the twistor field and let $u$ be a local section of $L_{< 0}$. Then one has
\begin{equation*}
\begin{split}
&(\nabla^A_z(A_*l))^V(Au) = \pi^A_{>0}\frac{\partial}{\partial z}Al(u) - (A_*l)^V\left(\pi_{<0}\frac{\partial}{\partial z} Au\right) 
\\
&= \left(\pi^A_{>0} Al\left(\pi_{<0}\frac{\partial}{\partial z} u\right) - \pi^A_{>0} A l^*(\partial_{z,-1}\pi_{-1}u)\right) - \pi^A_{>0}Al\left(A^{-1}\pi_{<0}\frac{\partial}{\partial z} Au\right).
\end{split}
\end{equation*}
Furthermore, since $AL_{\leqslant 0} = L^A_{\leqslant 0}$, one has $\pi_{>0}Al^H=0$. Therefore, one can use Proposition~\ref{adjoint:prop}, item $1)$, to continue with the last term as follows
\begin{equation*}
\begin{split}
&\pi^A_{>0}Al\left(A^{-1}\pi^A_{<0}\frac{\partial}{\partial z} Au\right) = \pi^A_{>0}Al^V\left(A^{-1}\pi^A_{<0}\frac{\partial}{\partial z} Au\right)\\
& = -\pi^A_{>0}Al^*\left(A^{-1}\pi^A_{<0}\frac{\partial}{\partial z} Au\right)=-\pi^A_{>0}Al^*\left(\pi_{<0}\frac{\partial}{\partial z}u\right) - \pi^A_{>0}Al^*(\partial_{z,-1}\pi_{-1}u)\\
& + \pi^A_{>0}Al^*(A^{-1}\partial^A_{z,-1}\pi^A_{-1}Au)= \pi^A_{>0}Al\left(\pi_{<0}\frac{\partial}{\partial z}u\right) - \pi^A_{>0}Al^*(\partial_{z,-1}\pi_{-1}u)\\
& + (A_*l)^*(\partial^A_{z,-1}\pi^A_{-1}Au).
\end{split}
\end{equation*}
Putting it together with the previous equality yields,
$$
(\nabla^A_z(A_*l))^V(Au) = -(A_*l)^*(\partial^A_{-1}(\partial_z)\pi^A_{-1}Au),
$$
which is exactly the condition for $A_*l$.

\end{proof}

\end{document}